\documentclass[11pt]{article}
\usepackage{tikz}
\usetikzlibrary{fit,positioning}
\usepackage[english]{babel}
\usepackage{amsmath}
\usepackage{amsthm}
\usepackage{amssymb}
\usepackage{mathrsfs}
\usepackage{amscd}
\usepackage{graphicx}
\usepackage{xcolor}
\usepackage[margin=1in]{geometry}

\def\ex{\mbox{ex}}

\newtheorem{thm}{Theorem}[section]

\newtheorem{lem}[thm]{Lemma}
\newtheorem{prop}[thm]{Proposition}

\newtheorem{problem}[thm]{Problem}
\newtheorem{clm}{\bf Claim}[section]

\begin{document}
\title{Turán Numbers of Linear Stars and Expanded Trees}
\author{Xinmin Hou$^{a,b}$, Anyao Wang$^c$\\
\small $^a$ School of Mathematical Sciences,\\
\small University of Science and Technology of China, Hefei 230026, Anhui, China\\
\small $^b$ Hefei National Laboratory,\\
\small University of Science and Technology of China, Hefei 230088, Anhui, China\\
\small $^c$School of Mathematics and Statistics,\\
\small Anhui Normal University, Wuhu, Anhui, China}
\maketitle
\date{ }
\begin{abstract}
For an $r$-uniform hypergraph $\mathcal H$, let
$\ex_r(n,\mathcal H)$ denote the maximum number of edges in an
$n$-vertex $r$-uniform hypergraph containing no copy of $\mathcal H$.
The $r$-uniform expansion $T^{(r)}$ of a tree $T$ is obtained by
adding $r-2$ new vertices to each edge, with the added sets pairwise
disjoint and disjoint from $V(T)$. In particular, the expansion of
$K_{1,k}$ is the linear star $\mathcal L_k^r$, whose $k$ edges
intersect pairwise in a single common vertex.
Chung and Frankl (1987) determined $\ex_3(n,\mathcal L_k^3)$ and all
extremal hypergraphs for every $k\ge3$ and sufficiently large $n$.
We extend the odd-$k$ part of their result to every $r\ge5$,
determining the exact Tur\'an number and the unique extremal
hypergraph up to isomorphism for sufficiently large $n$.
More generally, we extend this result to expansions of trees for
which the minimum degree among the vertices in the smaller color
class is odd and at least three. For every $r\ge5$ and sufficiently
large $n$, we establish lower and upper bounds on their Tur\'an
numbers. When no two vertices attaining this minimum degree in
that color class have a common neighbor, we determine the exact
Tur\'an number and the unique extremal hypergraph up to isomorphism.
The proofs combine the delta-system method with stability arguments and a detailed analysis of extremal constructions. 
\end{abstract}

%\begin{abstract}
%Let $T$ be a tree and let $r\ge 2$. The $r$-uniform expansion
%$T^{(r)}$ is obtained by enlarging every edge of $T$ with a distinct
%set of $r-2$ new vertices. Thus $T^{(r)}$ is a linear $r$-uniform
%hypergraph. Exact Tur\'an numbers are known for several expanded paths
%and cycles, whereas for general expanded trees the earlier results are
%primarily asymptotic or structural.
%{\color{blue}For $r\ge5$, we obtain bounds when the minimum degree
%$k$ in the smaller color class is odd. We determine the exact number
%and the unique extremal construction when no two minimum-degree
%vertices in that class have a common neighbor. We also give
%a counterexample with $k=4$ showing that an even-$k$ extension
%requires additional core traces.}
%\end{abstract}

\section{Introduction}

A {\it hypergraph} $\mathcal{F}=(V, E)$ consists of a vertex set $V$ and an edge set $E$, where every edge is a subset of $V$. The hypergraph $\mathcal{F}$ is said to be  {\it $r$-uniform} (or an {\it $r$-uniform system}) if each edge contains exactly $r$ vertices. When the vertex set is $V=[n]=\{1,2,\ldots, n\}$, we denote an $r$-uniform hypergraph (or $r$-graph for short) on $[n]$  by $\mathcal{F}\subseteq {[n]\choose r}$.
For simplicity, we write $\mathcal{F}$ for $E(\mathcal{F})$,  and its size is denoted by $|\mathcal{F}|(=|E(\mathcal{F})|)$.

For a hypergraph $\mathcal G$ and a positive integer $q$, its
\emph{$q$-shadow} is
\[
 \partial_q(\mathcal G)=
 \left\{Q\in{V(\mathcal G)\choose q}:
 Q\subseteq E\text{ for some }E\in\mathcal G\right\}.
\]
Following~\cite{FJ15}, an $r$-graph $\mathcal T$ is an
\emph{$r$-tree} if its edges admit an ordering $E_1,\ldots,E_m$
such that, for every $i>1$, there is an index $\alpha(i)<i$ with
\[
 E_i\cap\left(\bigcup_{j<i}E_j\right)\subseteq E_{\alpha(i)}.
\]
This definition does not require $\mathcal T$ to be connected.
An $r$-graph is \emph{$2$-reducible} if every edge contains at least
two vertices that belong to no other edge of the hypergraph.
A hypergraph is \emph{embeddable} in another hypergraph if there is
an injective map between their vertex sets that maps every edge of
the first hypergraph to an edge of the second.

Given a graph $T$ and an integer $r\ge 2$, the \emph{$r$-uniform
expansion} of $T$, denoted by $T^{(r)}$, is the $r$-graph obtained by
replacing every edge $e\in E(T)$ with $e\cup P_e$, where the sets
$P_e$ have size $r-2$ and are pairwise disjoint and disjoint from
$V(T)$.  When $T$ is a tree, we call $T^{(r)}$ an \emph{expanded
tree}; it is also a linear $r$-tree.
{F\"uredi~\cite{Fu14} studied linear trees in uniform
hypergraphs and established their basic properties.}
For $r\ge4$, $T^{(r)}$ is $2$-reducible, since every
expanded edge contains $r-2\ge2$ vertices belonging to no other edge.

  A {\it vertex cover}  of $\mathcal{F}$ is a subset of $V$ intersecting all edges of $\mathcal{F}$. 
The  minimum size of such a cover, called the {\it cover number},  is denoted by $\tau(\mathcal{F})$. 
Given an integer $t\ge 1$, a {\it $t$-crosscut} of $\mathcal{F}$ is a subset $X\subseteq V(\mathcal{F})$ satisfying that $|e\cap X|=t$ for all $e\in \mathcal{F}$. Note that not all hypergraphs admit a $t$-crosscut; for instance, $K_4^3$ and $K_4^3-e$ contains no 2-crosscut. 
When $\mathcal{F}$ has at least one $t$-crosscut, define $\sigma_t(\mathcal{F})$ (the {\it $t$-crosscut number}) to be the minimum size of such a $t$-crosscut. For brevity, we write crosscut and $\sigma(\mathcal{F})$ for the case $t=1$.  A fundamental relation is $\tau(\mathcal{F})\le \sigma_t(\mathcal{F})$.

Given two $r$-graphs $\mathcal{F}$ and $\mathcal{H}$, we say $\mathcal{F}$ is $\mathcal{H}$-free if $\mathcal{F}$ does not contain a copy of $\mathcal{H}$ as a subgraph. The {\em Tur\'an number} $\ex_r(n, \mathcal{H})$ is defined to be  the maximum number of hyperedges in an $\mathcal{H}$-free $r$-graph on $n$ vertices, i.e., 
$$\ex_r(n, \mathcal{H})=\max\{|\mathcal{F}| :   \mathcal{F}\text{ is }\mathcal{H}\text{-free and } |V(\mathcal{F})|=n\}.$$
We will omit the index $r$ if the uniformity $r$ is clear from context. 
 
{For a hypergraph $\mathcal H$ and $X\subseteq V(\mathcal H)$,
the \emph{link of the set $X$} is
\[
 \operatorname{L}_{\mathcal H}(X)
 =\{Y\subseteq V(\mathcal H)\setminus X:X\cup Y\in\mathcal H\}.
\]
The degree of $X$ is $\deg_{\mathcal H}(X)=|\operatorname{L}_{\mathcal H}(X)|$;
for a vertex $v$, we also write $d_{\mathcal H}(v)=\deg_{\mathcal H}(\{v\})$.
For a graph $G$, we reserve $N_G$ for the ordinary vertex neighborhood:
\[
 N_G(v)=\{u:uv\in E(G)\},\qquad
 N_G(U)=\left(\bigcup_{u\in U}N_G(u)\right)\setminus U.
\]
Thus a graph neighborhood is a set of vertices, whereas a link is a family of sets.}
When $|X|=k$, the degree of $X$ is specifically termed the $k$-degree of $X$. Denote   $\delta_k(\mathcal{H})$ and $\Delta_k(\mathcal{H})$  as the minimum $k$-degree and maximum $k$-degree of $\mathcal{H}$, respectively.
As usual, we write graph for 2-graph and write $\delta(\mathcal{H})$ and $\Delta(\mathcal{H})$ for $\delta_1(\mathcal{H})$ and $\Delta_1(\mathcal{H})$, respectively.
For a subset $X\subseteq V(\mathcal{H})$, let 
$$\delta_{\mathcal{H}}(X)=\min\{d_{\mathcal{H}}(x) : x\in X\} \text{ and } \Delta_{\mathcal{H}}(X)=\max\{d_{\mathcal{H}}(x): x\in X\}.$$
Let $\mathcal{H}[X]$ be the subgraph of $\mathcal{H}$ induced by $X$ for a subset $X\subseteq V(\mathcal{H})$ or $X\subseteq  E(\mathcal{H})$.
For $S, T\subset V(\mathcal{H})$, let $e_\mathcal{H}(S, T)$ be the number of hyperedges $e\in \mathcal{H}[S\cup T]$ with $e\cap S\not=\emptyset$ and $e\cap T\not=\emptyset$, if $S=T$, we use $e_\mathcal{H}(S)$ instead of $e_\mathcal{H}(S, S)$, and  $e_\mathcal{H}(u, T)$ instead of $e_\mathcal{H}(\{u\}, T)$ if $S=\{u\}$ for convenience, the index $\mathcal{H}$ will be omitted if no confusion from the context.
A {\it matching} $M$ in $\mathcal{H}$ is a subset of $\mathcal{H}$ with $\delta(\mathcal{H}[M])=\Delta(\mathcal{H}[M])=1$. The {\it matching number} of $\mathcal{H}$, denoted by $\nu(\mathcal{H})$, is the maximum size of a matching in $\mathcal{H}$.

The extremal families for expanded trees have special constructions.
To describe them, we need some definitions and notation. A graph is
called {\it almost $d$-regular} if all vertices, except possibly one
vertex of degree $d-1$, have degree $d$. Denote by $R(n,d)$ the family
of almost $d$-regular graphs on $n$ vertices.
Let $K[U, V]$ be a complete bipartite graph with vertex partitions $U$ and $V$.

\noindent{\bf Construction A:} 
Given an $n$-set $V$ and a subset $W\subseteq V$ with $|W|=a-1$. Let 
$$S_W=\left\{F\in {V\choose r}: F\cap W\neq\emptyset\right\}.$$
We construct an almost $(k-1)$-regular graph $G_1$ as follows:
$$V(G_1)=\{u_1,\ldots,u_{k-1},v_1,\ldots,v_{k-1}, v_k\}\subseteq V\setminus W,$$
and $E(G_1)=(E(K)-M_1)\cup E(v_k)$, where  $K=K[\{u_1, \ldots, u_{k-1}\}, \{v_1,\ldots, v_{k-1}\}]$, and $E(v_k)=\{u_iv_k, v_iv_k : 1\le i\le \lfloor\frac{k-1}{2}\rfloor\}$ and $M_1=\{u_iv_i  : 1\le i\le\lfloor\frac{k-1}{2}\rfloor\}$.
It can be easily checked that $G_1\in R(2k-1, k-1)$ with $v_k$ of degree $k-2$ when $k$ is even.
{Define $G_2=2K_k$ for odd $k$ on vertex set
$V'\subseteq V\setminus W$; all constructions with index $2$ below
are restricted to odd $k$. For each integer $0\le x\le a-2$, fix
a graph $R_x\in R(a-1,x)$ on $W$. Then $\Delta(R_x)\le x$ and
$e(R_x)=\lfloor(a-1)x/2\rfloor$.}
For $i=1,2$, let
$$\mathcal{F}_i=\left\{ F\in{V\setminus W\choose r} : F\cap V(G_i)\not=\emptyset \text{ and $\delta\left(G_i[F\cap V(G_i)]\right)\ge 1$}\right\}.$$
For even $k$ and $\frac k2\le j\le k-1$, let $E_j$ be the family of sets $F\in{V\backslash W\choose r}$ such that $\{ u_j, v_{k-1}, v_k\}\subseteq F$ and $v_k$ is an isolated vertex in $G_1[F\cap V(G_1)]$. Define
$$\mathcal{F}_3=\begin{cases}
\emptyset& k\text{ is odd}\\
\bigcup\limits_{j=\frac{k}{2}}^{k-1} E_j & k \text{ is even}
\end{cases}.$$
For such a choice of $R_x$, let
\[
 S_W(R_x)=\left\{F\in{V\choose r}:
 \begin{array}{l}
 F\cap W\ne\emptyset,\ \text{and either }|F\cap W|=1\\
 \text{or }e(R_x[F\cap W])\ge1
 \end{array}\right\}.
\]
Define $\mathcal L_i(n,r,k,a)=S_W\cup\mathcal F_i\cup\mathcal F_3$
for $i=1,2$, and
\[
 \mathcal L_2(n,r,k,a;R_x)=S_W(R_x)\cup\mathcal F_2
 \qquad(k\text{ odd}).
\]
%The semicolon records the chosen graph: different members of $R(a-1,x)$ need not give restricted constructions of the same size.
%Thus $\mathcal{L}_1$ is governed outside $W$ by the almost regular
%graph $G_1$, while $\mathcal{L}_2$ is governed by two disjoint%
%$k$-cliques; the additional family $\mathcal{F}_3$ corrects the
%even-$k$ construction.

\begin{samepage}
We call the specified graph $H$ (namely $G_i$ above)
the \emph{core graph}, and $S=V(H)$ its \emph{core vertex set}.
The \emph{trace} of a set $F$ on a vertex set $X$ is $F\cap X$;
for a hypergraph $\mathcal F$, its trace on $X$ is
$\mathcal F|X=\{F\cap X:F\in\mathcal F\}$, with repeated sets removed.
The \emph{core trace} of $F$ means $F\cap S$, a vertex set, not
necessarily a graph edge. Graph properties of this trace refer to
the induced graph $H[F\cap S]$. 
%These construction-specific notions are distinct from the center and kernel defined in Section~2.
\end{samepage}

\begin{figure}[htbp]
\centering
\begin{tikzpicture}[
  x=1cm,y=1cm,
  vertex/.style={circle,fill=black,inner sep=1.25pt},
  core/.style={draw=black,fill=white,rounded corners=2pt},
  wblock/.style={draw=black,fill=gray!8,rounded corners=2pt},
  outside/.style={draw=gray!60,fill=gray!3,rounded corners=2pt},
  every node/.style={font=\scriptsize}
]
\begin{scope}
  \draw[gray!65,rounded corners=2pt] (0,0.20) rectangle (4.85,4.35);
  \node[font=\scriptsize\bfseries] at (2.425,4.62)
    {(a) $\mathcal L_1(n,r,k,a)$};
  \node[wblock,minimum width=4.25cm,minimum height=0.72cm] (Wa) at (2.425,3.86) {};
  \node[font=\small] at (0.72,3.86) {$W$};
  \node[black] at (3.70,3.86) {$S_W$};
  \foreach \x in {1.65,1.85,2.05} \node[vertex,fill=gray!65] at (\x,3.86) {};
  \node[outside,minimum width=4.25cm,minimum height=2.65cm] at (2.425,1.82) {};
  \node[black!70] at (0.82,2.96) {$V\setminus W$};
  \node[core,minimum width=3.30cm,minimum height=1.85cm] (G1box) at (2.30,1.66) {};
  \node[black,font=\scriptsize\bfseries] at (2.30,2.42)
    {$G_1\in R(2k-1,k-1)$};
  \node[vertex,label=left:$u_1$]  (u1) at (1.15,2.02) {};
  \node[vertex,label=left:$u_2$]  (u2) at (1.15,1.56) {};
  \node at (1.15,1.30) {$\vdots$};
  \node[vertex,label=left:$u_{k-1}$] (u3) at (1.15,1.00) {};
  \node[vertex,label=right:$v_1$] (v1) at (2.35,2.02) {};
  \node[vertex,label=right:$v_2$] (v2) at (2.35,1.56) {};
  \node at (2.35,1.30) {$\vdots$};
  \node[vertex,label=right:$v_{k-1}$] (v3) at (2.35,1.00) {};
  \node[vertex,label=right:$v_k$] (vk) at (3.35,1.56) {};
  \draw[gray!70] (u1)--(v1) (u1)--(v2) (u2)--(v1)
    (u2)--(v3) (u3)--(v2) (u3)--(v3) (vk)--(u1) (vk)--(v1);
  \draw[black,dashed,thick,rounded corners=2pt]
    (0.90,0.78)--(2.62,0.78)--(3.62,1.32)--cycle;
  \node[black,fill=white,inner sep=1pt] at (3.35,0.69)
    {$\mathcal F_3$ (even $k$)};
  \node at (1.28,0.47) {$\mathcal F_1$};
  \foreach \y in {1.20,1.42,1.64} \node[vertex,fill=gray!55] at (4.35,\y) {};
\end{scope}

\begin{scope}[xshift=5.35cm]
  \draw[gray!65,rounded corners=2pt] (0,0.20) rectangle (4.85,4.35);
  \node[font=\scriptsize\bfseries] at (2.425,4.62)
    {(b) $\mathcal L_2(n,r,k,a)$, odd $k$};
  \node[wblock,minimum width=4.25cm,minimum height=0.72cm] (Wb) at (2.425,3.86) {};
  \node[font=\small] at (0.72,3.86) {$W$};
  \node[black] at (3.70,3.86) {$S_W$};
  \foreach \x in {1.65,1.85,2.05} \node[vertex,fill=gray!65] at (\x,3.86) {};
  \node[outside,minimum width=4.25cm,minimum height=2.65cm] at (2.425,1.82) {};
  \node[black!70] at (0.82,2.96) {$V\setminus W$};
  \node[core,minimum width=3.30cm,minimum height=1.85cm] at (2.30,1.66) {};
  \node[black,font=\scriptsize\bfseries] at (2.30,2.42)
    {$G_2=2K_k$};
  \node[draw=black,circle,fill=white,minimum size=1.05cm] (Kone) at (1.50,1.58) {$K_k$};
  \node[draw=black,circle,fill=white,minimum size=1.05cm] (Ktwo) at (3.10,1.58) {$K_k$};
  \draw[gray!55] (1.24,1.36)--(1.74,1.80) (1.24,1.80)--(1.74,1.36)
    (2.84,1.36)--(3.34,1.80) (2.84,1.80)--(3.34,1.36);
  \node at (2.30,0.72) {$\mathcal F_2$};
  \node[black!65] at (3.68,0.46) {$\mathcal F_3=\varnothing$};
  \foreach \y in {1.20,1.42,1.64} \node[vertex,fill=gray!55] at (4.35,\y) {};
\end{scope}

\begin{scope}[xshift=10.70cm]
  \draw[gray!65,rounded corners=2pt] (0,0.20) rectangle (4.85,4.35);
  \node[font=\scriptsize\bfseries] at (2.425,4.62)
    {{(c) $\mathcal L_2(n,r,k,a;R_x)$, odd $k$}};
  \node[wblock,minimum width=4.25cm,minimum height=0.72cm] (Wc) at (2.425,3.86) {};
  \node[font=\small] at (0.62,3.86) {$W$};
  \node[vertex] (r1) at (1.35,4.00) {};
  \node[vertex] (r2) at (1.85,4.00) {};
  \node[vertex] (r3) at (1.35,3.70) {};
  \node[vertex] (r4) at (1.85,3.70) {};
  \draw[black,thick] (r1)--(r2) (r2)--(r4) (r4)--(r3) (r3)--(r1);
  \node[black] at (2.28,3.86) {$R_x$};
  \node[black] at (3.68,3.86) {$S_W(R_x)$};
  \node[outside,minimum width=4.25cm,minimum height=2.65cm] at (2.425,1.82) {};
  \node[black!70] at (0.82,2.96) {$V\setminus W$};
  \node[core,minimum width=3.30cm,minimum height=1.85cm] at (2.30,1.66) {};
  \node[black,font=\scriptsize\bfseries] at (2.30,2.42)
    {$G_2=2K_k$};
  \node[draw=black,circle,fill=white,minimum size=1.05cm] at (1.50,1.58) {$K_k$};
  \node[draw=black,circle,fill=white,minimum size=1.05cm] at (3.10,1.58) {$K_k$};
  \draw[gray!55] (1.24,1.36)--(1.74,1.80) (1.24,1.80)--(1.74,1.36)
    (2.84,1.36)--(3.34,1.80) (2.84,1.80)--(3.34,1.36);
  \node at (2.30,0.72) {$\mathcal F_2$};
  \node[black!65] at (3.68,0.46) {$\mathcal F_3=\varnothing$};
  \foreach \y in {1.20,1.42,1.64} \node[vertex,fill=gray!55] at (4.35,\y) {};
\end{scope}
\end{tikzpicture}
\caption{Schematic views of the three constructions.  
%The shaded block at the top is $W$, the gray block below it is $V\setminus W$, and the
%outlined subgraph governs members lying entirely outside $W$; the gray
%dots are unrestricted filler vertices.  
An $r$-set outside $W$ lies in
$\mathcal F_i$ precisely when the vertices it selects from $G_i$
induce no isolated vertex.  In (a), the dashed triple represents
the additional family $\mathcal F_3$ for even $k$; in (c), $S_W(R_x)$
imposes the displayed $R_x$ condition inside $W$. The drawing of
$R_x$ uses $R_2=C_4\in R(4,2)$ as an example. The TikZ code for this figure was prepared with assistance
from GPT-6 Astra and checked by the authors.}
\label{fig:extremal-constructions}
\end{figure}

\begin{samepage}
{ Given a set $W$ and a positive integer $s$, a {\it $\Delta$-system} or {\it sunflower} of size $s$ is a family of sets $F_1,\ldots, F_s$  such that  $F_i\cap F_j=W$ for all $1\le i<j\le s$,  we call $W$ the kernel and sets $F_1, \ldots, F_s$ the petals (or members) of the $\Delta$-system. A sunflower $\{F_1, \ldots, F_s\}$ is called {\em $r$-uniform} if $|F_i|=r$ for all $1\le i\le s$.}
Let $\mathcal{L}_k^r$ be an $r$-uniform sunflower of size $k$ with kernel of size $1$ (also called a linear star).
\par\end{samepage}
More generally, for $0\le\ell<r$, let
$\mathcal L_{k,\ell}^r$ be an $r$-uniform sunflower with $k$ edges
and a kernel of size $\ell$, and put
\[
 f(n,r,\ell,k)=\ex_r(n,\mathcal L_{k,\ell}^r).
\]
We also use the finite sunflower extremal function
\[
 \phi(r,s)=\max\{|\mathcal D|:\mathcal D\text{ is an }r\text{-graph
 containing no sunflower of size }s\}.
\]
 Chung and Frankl~{\cite{CF87}} determined the exact value of the Tur\'an function of $\mathcal{L}_k^3$ and its extremal graphs.
\begin{thm}[Theorems 1.1 and 1.2 in~{\cite{CF87}}]\label{THM: even k}
	Suppose that $k\ge 3$ and $n>k(k-1)(5k+2)/2$. Then
	$$\ex_3(n, \mathcal{L}_k^3)=f(n,3,1,k)=\begin{cases}
		(n-2k)k(k-1)+2{k\choose	3} & \text{if } k \text{ is odd},\\
		nk(k-\frac{3}2)-\frac 12 (2k^3 - 9k + 6)& \text{if } k \text{ is even}.
	\end{cases}$$
Moreover, equality holds if and only if the extremal
$3$-graph is isomorphic to $\mathcal{L}_2(n,3,k,1)$ when
$k$ is odd, and to $\mathcal{L}_1(n,3,k,1)$ when $k$ is even.
\end{thm}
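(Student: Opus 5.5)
We will follow the Chung--Frankl strategy: a lower bound from the explicit constructions, and an upper bound from a structural decomposition driven by degrees of pairs. With $a=1$ we have $W=\emptyset$, so $\mathcal L_i(n,3,k,1)=\mathcal F_i\cup\mathcal F_3$. For the lower bound we first check that these $3$-graphs are $\mathcal L_k^3$-free and count their edges.

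For odd $k$, a triple lies in $\mathcal F_2$ exactly when its trace on $V(2K_k)$ is an edge of one clique or a triangle of one clique. A vertex $v$ in a clique has at most $k-1$ neighbours in the core. Each edge through $v$ uses a distinct core neighbour $u$ of $v$ (edges $\{v,u,x\}$ with $x$ outside the core, or triangles $\{v,u,w\}$), so any linear star centred at $v$ has at most $k-1$ edges. A vertex outside the core lies only in edges $\{x\}\cup e$ with $e$ an edge of $2K_k$. A linear star centred at $x$ therefore uses a matching of $2K_k$, which has at most $2\cdot\frac{k-1}2=k-1$ edges. The count is $(n-2k)\cdot 2\binom k2+2\binom k3$. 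For even $k$ the analogous check uses $G_1$, whose matching number is $k-1$ because it has $2k-1$ vertices, together with the correction $\mathcal F_3$. One verifies that $v_k$, of degree $k-2$, gains exactly one extra disjoint edge via $E_j$ but never reaches $k$, and then counts the edges.

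For the upper bound, let $\mathcal F$ be $\mathcal L_k^3$-free on $n$ vertices. Call a pair $P$ \emph{heavy} if $\deg(P)\ge 3k$, and let $G$ be the graph of heavy pairs. Fix a vertex $v$. Edges through $v$ whose link pair is not heavy form a link graph $L(v)$ in which a matching of size $k$ would yield $\mathcal L_k^3$, so $\nu(L(v))\le k-1$. Heavy pairs give greedy extensions, and this shows the following:

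\begin{itemize}
\item[(i)] $\nu(G)\le k-1$: from a $k$-matching $x_iy_i$ of heavy pairs containing no common center we cannot directly build a star, so instead we argue through the centers. If $\deg_G(v)\ge k$, greedily choosing distinct third vertices gives a star centred at $v$; hence $\Delta(G)\le k-1$.
\item[(ii)] Every edge of $\mathcal F$ not containing a heavy pair is charged to its vertices. Since $\nu(L(v))\le k-1$ and each such link edge has low degree, vertex $v$ is incident to $O(k^2)$ such edges, and a double-counting argument shows there are $O(k^3)$ of them in total. In fact, a vertex incident to many light edges creates a star by Erdős–Gallai/matching arguments.
\end{itemize}

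Hence $|\mathcal F|\le (n-2)e(G)/\!\!\sim$ plus lower order terms. Edges of $\mathcal F$ containing a heavy pair number at most $\sum_{P\in G}\deg(P)$, and for an outside vertex $x$ the pairs $P$ with $P\cup\{x\}\in\mathcal F$ span a graph with no $k$-matching. So the main term is $n\cdot\max\{e(H):\nu(H)\le k-1,\ \Delta(H)\le k-1\}$. By the Chvátal--Hanson bound this maximum equals $k(k-1)$ for odd $k$, attained only by $2K_k$, and $k(k-\tfrac32)$ for even $k$, attained by almost-regular graphs on $2k-1$ vertices such as $G_1$. This gives $|\mathcal F|\le n\,e(G)+O(k^3)$ and, through stability, forces $G$ to be the extremal graph once $|\mathcal F|$ attains the construction's value.

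The main obstacle is the exact step: showing that the lower-order terms and the core-internal edges match the constant exactly, and that equality forces $\mathcal F\cong\mathcal L_i$. We plan to handle it as follows. First, fix the heavy graph $G=H$ of the extremal type with core $S$. Second, show that every edge of $\mathcal F$ meeting $V\setminus S$ in two vertices is absent for large $n$. Third, show that every edge meeting $V\setminus S$ in one vertex $x$ must contain an edge of $H$; otherwise $x$'s link would gain an extra independent pair, and together with the matching structure this yields $k$ disjoint link pairs.

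Finally, the triples inside $S$ are optimised by a finite case analysis. For odd $k$, the two clique triangles already saturate each core vertex's star capacity of $k-1$, and any cross triple creates a $k$-th disjoint edge at some vertex. For even $k$, the vertex $v_k$ of deficient degree permits exactly the $\mathcal F_3$ edges. The threshold $n>k(k-1)(5k+2)/2$ comes from making the $O(k^3)$ error smaller than the loss of one outside vertex's contribution in every non-extremal configuration.
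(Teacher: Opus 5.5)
The paper does not prove this statement. It quotes it from Chung--Frankl and uses it as a black box, for instance to bound $|\mathcal L'|$ in the proof of Theorem~\ref{LEM: linear star}. Your lower-bound part is essentially fine. For even $k$ you also need to check the centres $u_j$ and $v_{k-1}$: each $\mathcal F_3$ triple contains a $G_1$-neighbour of both. The count then closes, using $|\mathcal F_3|=k/2$ and $\tau_3(G_1)=(\frac k2-1)(\frac k2-2)$.

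The upper bound has a genuine gap in step (ii). Your per-vertex estimate is correct: each vertex lies in $O(k^2)$ triples containing no heavy pair. But summing it gives $O(k^2n)$ light triples, not $O(k^3)$, and this order is attained. A linear $3$-graph in which every vertex has degree $k-1$ is $\mathcal L_k^3$-free, since in a linear $3$-graph any $k$ edges through a vertex already form a linear $k$-star. It has about $(k-1)n/3$ edges and contains no heavy pair at all. So light triples are of the same order as the main term, and your argument only yields $|\mathcal F|\le(\phi(2,k)+O(k^2))n$, which misses even the leading coefficient.

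Claim (i) is also false as stated. Take disjoint pairs $x_iy_i$, each completed to triples by its own private set of $3k$ vertices. This family is $\mathcal L_k^3$-free, and its heavy graph is a matching of arbitrary size. Only the restricted links $L(x)\cap G$ satisfy $\nu\le k-1$ and $\Delta\le k-1$, so $n\,e(G)$ is not a meaningful quantity to bound by.

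What is missing is a trade-off argument. You must show that a vertex whose link uses light triples, or misses part of the core graph, loses at least as many heavy-pair triples as it gains, and you must determine exactly when this is tight. This is where $r=3$ genuinely differs from the paper's $r\ge5$ argument. There, the defective set $Z$ satisfies $|\mathcal F[Z]|=O(|Z|^{r-2})$ while forcing $\Omega(|Z|n^{r-3})$ missing edges, so an asymptotic comparison suffices. For $r=3$ both quantities are only linear in $|Z|$, so the outcome depends on exact constants. Note also that Theorem~\ref{THM: asymptotic} requires $r\ge2\ell+3$, i.e.\ $r\ge5$, and so gives nothing here.

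Your exact step does not supply this accounting. It presupposes that the heavy graph is already $2K_k$ or $G_1$, and that every vertex outside $S$ has a link containing all of $H$. Neither follows from a heaviness threshold of $3k$ without the trade-off, since bounded-degree heavy pairs can sit off the core. It also never treats triples disjoint from $S$, which are exactly the triples that can live among defective vertices. For the same reason, your explanation of the threshold $n>k(k-1)(5k+2)/2$ via an $O(k^3)$ error term has no basis.
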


{Buci\'c, Dragani\'c, Sudakov, and Tran~\cite{BDS21} estimated the value of the Tur\'an function of $\mathcal{L}_k^4$ and got the following result:
For $2\le k\le n$,
$$\ex_4(n, \mathcal{L}_k^4)\approx k^2n^2.$$}  

In this article, we first extend the odd-$k$ part of Theorem~\ref{THM: even k} to all $r\ge5$.  
The example with $k=4$ in the last section shows that the even-$k$
lifting requires additional higher traces; its exact form remains open.

\begin{thm}\label{LEM: linear star}
	Suppose that $r\ge5$, $k\ge3$ is odd, and $n$ is sufficiently
	large.  Then
	\[
	 \ex_r(n,\mathcal L_k^r)
	 =|\mathcal L_2(n,r,k,1)|.
	\]
	Moreover, equality holds if and only if the extremal $r$-graph is
	isomorphic to $\mathcal L_2(n,r,k,1)$.
\end{thm}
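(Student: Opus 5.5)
The lower bound comes from checking that $\mathcal L_2(n,r,k,1)$ is $\mathcal L_k^r$-free. For the upper bound I would use the delta-system method to reduce the problem to a degree–matching extremal problem for a bounded "kernel graph" $G$. After that, a stability and cleaning step shows that any extremal family coincides with the construction.

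\emph{Lower bound.} Here $a=1$, so $W=\emptyset$ and $\mathcal L_2(n,r,k,1)=\mathcal F_2$. This is the family of $r$-sets whose trace on $V(2K_k)$ is nonempty and meets each clique in either $0$ or at least $2$ vertices. I would check $\mathcal L_k^r$-freeness by cases on the centre $x$ of a potential linear star $F_1,\dots,F_k$.
\begin{itemize}
\item If $x$ lies in a clique, then each $F_i$ must contain a second vertex of that clique, and these vertices are distinct. Only $k-1$ such vertices exist, so at most $k-1$ petals fit.
\item If $x$ lies outside the core, then each $F_i\setminus\{x\}$ contains a pair inside one clique, and these pairs are disjoint. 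Since $k$ is odd, a $K_k$ has at most $(k-1)/2$ disjoint pairs, so at most $2\cdot\frac{k-1}{2}=k-1$ petals fit.
\end{itemize}
This is exactly where the oddness of $k$ enters.

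\emph{Upper bound.} Let $\mathcal F$ be $\mathcal L_k^r$-free. I would apply F\"uredi's intersection-structure theorem. It yields a subfamily $\mathcal F^*\subseteq\mathcal F$ with $|\mathcal F\setminus\mathcal F^*|=O(n^{r-2})$ in which every edge has a homogeneous intersection pattern. Moreover, every set in that pattern is the kernel of a sunflower in $\mathcal F$ of size at least $rk$. No singleton can be such a kernel, since that would immediately give $\mathcal L_k^r$. Let $G$ be the graph of pairs that are kernels of such large sunflowers. Edges of $\mathcal F$ whose minimal kernels all have size at least $3$ number $O(n^{r-3})$.

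Greedily choosing petals from the large sunflowers produces two constraints on $G$.
\begin{itemize}
\item $\Delta(G)\le k-1$: $k$ $G$-edges at a common vertex $x$ would yield $\mathcal L_k^r$ centred at $x$.
\item $\nu(G)\le k-1$: a $k$-matching in $G$, together with any fresh vertex $x$, lets us choose $r$-sets $\{x\}\cup e_i\cup P_i$. To make this rigorous I would take $x$ inside suitable petals, using that each kernel supports $\Omega(n^{r-2})$ sets, and this requires some care. Alternatively, I would apply the argument at the level of the $r$-sets counted.
\end{itemize}
By the Chv\'atal--Hanson bound on graphs with $\nu\le k-1$ and $\Delta\le k-1$, we get $e(G)\le k(k-1)$. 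For $k$ odd, equality holds only for $G=2K_k$; I would cite this uniqueness or reprove it quickly via Gallai--Edmonds. Hence
\[
|\mathcal F|\le |\{F: F\supseteq e \text{ for some } e\in G\}|+O(n^{r-3})\le e(G)\binom{n-2}{r-2}+O(n^{r-3}).
\]
If $G\ne 2K_k$, we lose $\binom{n-2}{r-2}$, which beats the $O(n^{r-3})$ error, so $|\mathcal F|<|\mathcal L_2(n,r,k,1)|$ for large $n$.

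\emph{Cleaning step (expected main obstacle).} When $G=2K_k$ and $|\mathcal F|\ge|\mathcal L_2|$, I must show $\mathcal F\subseteq\mathcal F_2$. The difficulty is the second-order terms. I would take a hypothetical edge $E\in\mathcal F\setminus\mathcal F_2$, either with an isolated core vertex in its trace or with no core pair at all. Then I would show that $E$, together with $k-1$ disjoint petals from the large sunflowers on $2K_k$-pairs, creates $\mathcal L_k^r$ centred at a suitable vertex of $E$.
\begin{itemize}
\item If $E$ has an isolated core vertex $v$, then $E$ supplies a $k$-th petal at $v$ avoiding the $k-1$ clique-mates of $v$.
\item If $E$ meets no clique in two vertices, then $E$ serves as an extra petal at an outside vertex $x\in E$, added to $k-1$ disjoint clique pairs.
\end{itemize}
The condition $r\ge5$ gives enough free vertices per petal to keep all petals disjoint outside the centre. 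This forces $\mathcal F\subseteq\mathcal F_2$, so the upper bound holds with equality only for $\mathcal F=\mathcal F_2\cong\mathcal L_2(n,r,k,1)$. The delicate bookkeeping is in making the error terms and the greedy petal choices compatible simultaneously, and that is where I expect most of the work to lie.
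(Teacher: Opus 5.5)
\textbf{Main gap: the bounded kernel graph.} Your lower-bound check is correct and matches the paper's. The upper bound breaks at $\nu(G)\le k-1$, and with it go $e(G)\le k(k-1)$ and $|\mathcal F|\le e(G)\binom{n-2}{r-2}+O(n^{r-3})$. No bounded kernel graph of this kind exists for a general $\mathcal L_k^r$-free family. Here is a counterexample.
\begin{itemize}
\item Split $[n]$ into $m\ge k$ blocks $V_1,\dots,V_m$ of size $n/m$, fix a pair $e_i\subseteq V_i$, and let $\mathcal F=\bigcup_i\{F\in\binom{V_i}{r}:e_i\subseteq F\}$.
\item Two edges in one block meet in $e_i$, and edges in different blocks are disjoint. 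So $\mathcal F$ is even $\mathcal L_2^r$-free.
\item Each $e_i$ is the kernel of a sunflower of size $\Omega(n)$ and lies in $\Omega(n^{r-2})$ edges. Even the triples $e_i\cup\{y\}$, $y\in V_i$, have large kernel degree.
\item Yet $e_1,\dots,e_m$ is a $k$-matching in your $G$.
\end{itemize}
Your repair (``each kernel supports $\Omega(n^{r-2})$ sets'') does not help. The links live on disjoint vertex sets, so no common vertex $x$ exists.

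A common head exists only when the links are almost complete. That requires the hypothesis $|\mathcal F|\ge|\mathcal L_2(n,r,k,1)|$, i.e.\ a stability argument, not an unconditional bound. The paper supplies it as follows.
\begin{itemize}
\item The full-trace theorem gives each edge $F$ of the rank-$\ge r-2$ homogeneous part a unique centre pair $B(F)$. Every triple $B(F)\cup\{y\}$, $y\in F\setminus B(F)$, has kernel degree at least $s$.
\item These triples form an $\mathcal L_k^3$-free $3$-graph, so by Theorem~\ref{THM: even k} there are only $O(n)$ of them.
\item The edges with centre $B$ number at most $\binom{d^+(B)}{r-2}$, where $d^+(B)$ counts such heads $y$.
\item Pairs of out-degree at most $n^{1-7/(3(r-2))}$ contribute $o(n^{r-2})$. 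So do edges meeting the union of the remaining pairs outside their centre.
\item A pigeonhole on heads combined with Theorem~\ref{LEM: AHS} gives $\sum d_i\le \ell n$, where $\ell=k(k-1)$.
\item Convexity together with $|\mathcal F|\ge|\mathcal L_2(n,r,k,1)|$ forces $\ell$ pairs of out-degree $n-O(n^{7/9})$.
\end{itemize}
Only for these near-complete links does your common-head argument excluding $kK_2$ work. Applying Theorem~\ref{LEM: AHS} to exactly $\ell$ edges then gives $2K_k$.

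\textbf{Second gap: the cleaning step.} Your first subcase (an isolated core vertex) is fine, using only the core-pair sunflowers. The second subcase fails. If $E\cap S=\emptyset$, a vertex $x\in E$ need not satisfy $\deg^*_{\mathcal F}(e\cup\{x\})\ge s$ for the clique pairs $e$. For example, take $\mathcal F_2$ restricted to $[n]\setminus Z$ together with any $\mathcal L_k^r$-free family inside $Z$. This is $\mathcal L_k^r$-free but not contained in $\mathcal F_2$. No linear-star argument can exclude it; you need counting.
\begin{itemize}
\item Let $M$ be the set of non-core vertices $y$ with $\deg^*_{\mathcal F}(B_i\cup\{y\})\ge s$ for all $\ell$ core pairs $B_i$. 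Let $Z$ be the remaining non-core vertices.
\item Each $z\in Z$ forces $\Omega(n^{r-3})$ members $B_z\cup\{z\}\cup X$ of $\mathcal L_2(n,r,k,1)$ to be absent from $\mathcal F$.
\item On the other hand, $|\mathcal F[Z]|=O(|Z|^{r-2})=o(|Z|n^{r-3})$, since $|Z|=O(n^{7/9})$.
\end{itemize}
Extremality therefore forces $Z=\emptyset$, and only then does your greedy argument go through.

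\textbf{Minor point.} Füredi's lemma alone yields only a constant fraction. A remainder of $O(n^{r-2})$ would swamp every comparison above. You need the repeated extraction with the rank bound, as in Lemma~\ref{LEM: partition}, to get a remainder of $O(n^{r-3})$.
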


\noindent  {\bf Remark A:}   (1) Theorem~\ref{THM: asymptotic} below, specialized to a
one-vertex kernel, determines only the leading term
$(\phi(2,k)+o(1)){n-2\choose r-2}$, whereas
Theorem~\ref{LEM: linear star} determines the exact value,
including all lower-order terms, and identifies the extremal family
when $k$ is odd.  

(2) For comparison, Keevash, Lifshitz, Long, and Minzer~\cite{KLLM25}
consider the $q$-uniform expansion $G^+(q)$ of a bounded-degree
uniform hypergraph $G$.  Write $s=e(G)$ and let $\sigma(G)$ be the
minimum crosscut size.  Their asymptotic theorem states that, for
$\sigma(G)\ge2$ and every $\varepsilon>0$, there is a constant $C=C(q,\Delta(G),\varepsilon)$
such that
\[
 \ex_q(n,G^+(q))=(1\pm\varepsilon)
 \left|\left\{F\in{[n]\choose q}:|F\cap J|=1\right\}\right|
 \qquad\text{whenever } C\le q\le \frac{n}{Cs},
\]
where $J$ is any fixed set of size $\sigma(G)-1$.  Thus, in this
range, the crosscut construction determines the leading term and a
corresponding stability theorem shows that every nearly extremal
family is close to a family controlled by such a bounded set $J$.

They obtain an exact result under an additional criticality
assumption.  Namely, $G$ is called \emph{critical} if it has an edge
$e$ for which
\[
 \sigma(G-e)=\tau(G-e)<\tau(G)=\sigma(G),
\]
where $\tau(G)$ is the vertex-cover number.  For such $G$, in the same
range $C\le q\le n/(Cs)$, their theorem gives
\[
 \ex_q(n,G^+(q))=
 \left|\left\{F\in{[n]\choose q}:F\cap J\ne\emptyset\right\}\right|,
 \qquad |J|=\sigma(G)-1.
\]

These results do not imply Theorem~\ref{LEM: linear star}.  Indeed,
$\mathcal L_k^r$ is the $r$-uniform expansion of the star
$K_{1,k}$.  For $k\ge2$ one has
$\sigma(K_{1,k})=\tau(K_{1,k})=1$, and deleting a single edge still
leaves vertex-cover number one.  Hence $K_{1,k}$ neither satisfies
the hypothesis $\sigma(G)\ge2$ in the general asymptotic theorem nor
is critical in the above sense.  Moreover, the general theorems
require the expansion uniformity to satisfy the unspecified lower
bound $q\ge C$, while Theorem~\ref{LEM: linear star}
applies to every fixed $r\ge5$ in the odd-$k$ case.  It resolves the
first nonzero term, of order $n^{r-2}$, together with all lower-order
terms and the complete extremal classification in that case.
Likewise, the main
theorem below assumes structural conditions on a color class of the
base tree, rather than the above edge-critical identity, and its
extremal construction depends on the local sets $A_0$ and $B_0$.

(3) Furthermore, for odd $k$ we extend this result to the
case where two vertex-disjoint linear $(k-1)$-stars are also
forbidden.  The expansion results of star-forest were given by
Khormali and Palmer~\cite{KP22}, which can be written in our notation as
\[
 \ex_r(n,m\mathcal L_k^r)
 =\binom{n}{r}-\binom{n-m+1}{r}
  +\ex_r(n-m+1,\mathcal L_k^r),
\]
where the parameters are fixed and $n$ is sufficiently large.

%The even case is discussed in Section~\ref{SEC: remarks}.

We further extend Theorem~\ref{LEM: linear star} to the case where two vertex-disjoint linear $(k-1)$-stars are also forbidden. 

\begin{thm}\label{LEM: two linear star}

	Suppose that $r\ge5$, $k\ge3$ is odd, and $n$ is sufficiently
	large. Then
	\[
	 \ex_r\bigl(n,\{\mathcal L_k^r,2\mathcal L_{k-1}^r\}\bigr)
	 =|\mathcal L_1(n,r,k,1)|.
	\]
	Moreover, equality holds if and only if the extremal $r$-graph is
	isomorphic to $\mathcal L_1(n,r,k,1)$.

\end{thm}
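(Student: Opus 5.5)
The lower bound is a direct check on the construction. With $a=1$ we have $W=\emptyset$, and $\mathcal F_3=\emptyset$ because $k$ is odd, so $\mathcal L_1(n,r,k,1)=\mathcal F_1$. Since $k$ is odd, $(2k-1)(k-1)$ is even, and $G_1$ is a $(k-1)$-regular graph on $2k-1$ vertices.

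\emph{No linear $k$-star.} Suppose the star has center $v$.
\begin{itemize}
\item If $v\in V(G_1)$, then $v$ is not isolated in the trace of any edge through $v$. So each such edge contains a $G_1$-neighbour of $v$. These neighbours are pairwise distinct across the $k$ petals, but $v$ has only $k-1$ of them.
\item If $v\notin V(G_1)$, the $k$ traces are pairwise disjoint sets of size at least $2$ inside a $(2k-1)$-set, which is impossible.
\end{itemize}

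\emph{No $2\mathcal L_{k-1}^r$.} Each linear $(k-1)$-star occupies at least $k$ core vertices: either its center together with $k-1$ neighbours, or $k-1$ disjoint traces of size at least $2$. Two disjoint such stars would therefore need $2k>2k-1$ core vertices. This also explains why $2K_k$ is excluded, so $\mathcal L_2$ is no longer admissible.

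For the upper bound, let $\mathcal F$ be extremal. I first pass to a subfamily $\mathcal F^*$ with $|\mathcal F\setminus\mathcal F^*|=O(n^{r-3})$ in which every edge is homogeneous in the sense of F\"uredi's delta-system method. This means that each member's intersection pattern with the other members is described by a fixed family of kernels, each of which is the kernel of a sunflower of size $\gg rk$.

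\emph{Heavy pairs.} Since $r\ge5$, the expansion is $2$-reducible, so any forbidden configuration can be embedded greedily once its "skeleton" kernels exist as kernels of large sunflowers. If some $F\in\mathcal F^*$ had all of its kernels of size at least $2$ avoiding a singleton structure, the count would drop to $O(n^{r-3})$. Hence almost every edge contains a $2$-element kernel. Let $G$ be the graph of pairs that are kernels of large sunflowers in $\mathcal F^*$, i.e. pairs of degree $\Omega(n^{r-2})$. Then
\[
 |\mathcal F|\le e(G)\binom{n-2}{r-2}+O(n^{r-3}),
\]
and $\Sigma_v d(v)\le r|\mathcal F|$ gives $e(G)=O(1)$.

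\emph{Structure of $G$.} I then derive the following properties.
\begin{enumerate}
\item[(i)] $\Delta(G)\le k-1$; otherwise $k$ heavy pairs at $v$ extend greedily to $\mathcal L_k^r$.
\item[(ii)] $G$ has no two vertex-disjoint copies of $K_{1,k-1}$, for the same reason with $2\mathcal L_{k-1}^r$.
\item[(iii)] Edges of $\mathcal F^*$ whose $G$-trace has an isolated vertex $v$ in a heavy position number only $O(n^{r-3})$. Indeed, at such a $v$ the link of $v$ contains no $k$ pairwise disjoint $(r-1)$-sets. By stability for the Erd\H{o}s matching problem, this link is essentially covered by the at most $k-1$ $G$-neighbours of $v$.
\end{enumerate}
From (iii), the edges of $\mathcal F$ are essentially the $r$-sets whose trace on $V(G)$ induces a subgraph of $G$ with no isolated vertex. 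I would then compute this number as a function of $G$:
\[
 \#\{\text{such } r\text{-sets}\}
 =e(G)\binom{n-2}{r-2}+c_3(G)\binom{n-3}{r-3}+\cdots .
\]
Here $c_3(G)$ counts the $3$-sets inducing no isolated vertex, namely the paths $P_3$ and triangles in $G$.

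\emph{Main obstacle.} The hardest step is the exact graph-optimization. Among graphs satisfying (i) and (ii), I must show that $e(G)\le(2k-1)(k-1)/2$, and that when equality holds the lower-order coefficients are maximized only by $G_1$ and not, for example, by $2K_k$ or other $(k-1)$-regular graphs. For the edge bound, I would take a vertex $v$ of degree $k-1$ with its closed neighbourhood $N[v]$. Condition (ii) forces every other vertex of degree $k-1$ to have a neighbour in $N[v]$, which bounds the number of vertices with degree $k-1$. Vertices of degree at most $k-2$ must be shown to be unprofitable, using the $O(1)$ slack together with (iii). The value $k$ odd enters through parity: a $(k-1)$-regular graph on $2k-1$ vertices exists, and $2k-1$ is the largest vertex count compatible with (ii).

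To separate the regular candidates on $2k-1$ vertices, I compare the $n^{r-3}$ coefficients, i.e. the number of $P_3$'s and triangles. Since every $(k-1)$-regular graph on $2k-1$ vertices has the same number of $P_3$'s, I expect this comparison to need the higher traces as well. Finally, I run a cleaning argument: if some $F\in\mathcal F$ has a trace with an isolated vertex, or misses the core entirely, then either it completes a forbidden configuration using the many available heavy-pair edges, or deleting it and adding a missing member of $\mathcal L_1$ increases the size. This yields $\mathcal F\cong\mathcal L_1(n,r,k,1)$.
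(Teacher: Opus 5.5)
Your check that $\mathcal L_1(n,r,k,1)$ is $\{\mathcal L_k^r,2\mathcal L_{k-1}^r\}$-free is correct. The upper bound, however, has a genuine gap exactly at the step you flag as the main obstacle.

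\textbf{The edge bound on $G$.} As you pose it, the optimization is false. Conditions (i) and (ii) do not bound $e(G)$ at all: $mK_2$, or $K_k\sqcup mK_{k-1}$, satisfies both for every $m$. Condition (iii) restricts members of $\mathcal F$, not edges of $G$.
\begin{itemize}
\item The missing constraint is $\nu(G)\le k-1$. Take $k$ disjoint heavy pairs $B_1,\dots,B_k$ and one vertex $y$ such that every $B_j\cup\{y\}$ is the kernel of a size-$s$ sunflower. These give an $\mathcal L_k^3$ of heavy triples, which extends greedily to $\mathcal L_k^r$.
\item To find such a $y$, you must know that the core pairs have almost all of $[n]$ as common heads. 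Proving this is the heart of the paper's argument. It uses the unique pair $B(F)$ from the Frankl--F\"uredi full-trace lemma and the arcs $(B(F),y)$ with $y\in F\setminus B(F)$.
\item A pigeonhole on a common head, together with the Wang--Hou--Liu--Ma theorem, gives $\sum_i d^+(B_i)\le\ell'n$. That theorem says a $\{K_{1,k},kK_2,2K_{1,k-1}\}$-free graph has at most $k^2-\frac32k+\frac12$ edges for odd $k$, with equality iff it lies in $R(2k-1,k-1)$.
\item The lower bound on $|\mathcal F|$ gives $\sum_i d^+(B_i)>(\ell'-1)n$. Convexity of $\binom{d}{r-2}$ then forces the top $\ell'$ pairs to have out-degree $n-O(n^{7/9})$.
\item Only at this point do those $\ell'$ pairs span a $\{K_{1,k},kK_2,2K_{1,k-1}\}$-free graph, hence a member of $R(2k-1,k-1)$.
\end{itemize}
Your $N[v]$ argument, plus the hope that low-degree vertices are ``unprofitable'', does not replace this.

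Relatedly, $|\mathcal F|\le e(G)\binom{n-2}{r-2}+O(n^{r-3})$ is unjustified. Kernels of size-$s$ sunflowers are not the same as pairs of degree $\Omega(n^{r-2})$, and a priori there may be linearly many kernel pairs of intermediate degree. This is why the paper introduces the threshold $x=n^{1-7/(3(r-2))}$ and gets an error of only $O(n^{r-20/9})$.

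\textbf{Singling out $G_1$.} This step rests on a misconception. The $n^{r-3}$ coefficient is $\tau_2(H)+\tau_3(H)$, the number of $3$-sets spanning two or three edges of $H$. All $(k-1)$-regular graphs on $2k-1$ vertices do share the number of \emph{non-induced} $P_3$'s, but that number is $\sum_v\binom{d(v)}2=\tau_2+3\tau_3$. Hence
\[
\tau_2+\tau_3=\sum_v\binom{d(v)}2-2\tau_3 .
\]
So the $n^{r-3}$ coefficient \emph{does} separate the candidates: it is maximized exactly when the triangle count is minimized, and higher traces are not needed. What you need is the Chung--Frankl theorem: for odd $k$, every $H\in R(2k-1,k-1)$ has $\tau_3(H)\ge\frac{k-1}2\bigl(\frac{k-1}2-1\bigr)$, with equality iff $H\cong G_1$. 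Without it you cannot single out $G_1$, so neither the exact value nor uniqueness follows.

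\textbf{The cleaning step.} This also needs a quantitative form, since a one-edge swap need not preserve freeness. The paper isolates the set $Z$ of non-core vertices $y$ for which some $B_i\cup\{y\}$ is not the kernel of a size-$s$ sunflower. It then shows:
\begin{itemize}
\item every member of $\mathcal F$ not inside $Z$ has a core trace with no isolated vertex;
\item each $z\in Z$ forces $\Omega(n^{r-3})$ missing members, while $|\mathcal F[Z]|=O(|Z|^{r-2})$.
\end{itemize}
Together these force $Z=\emptyset$.
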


{The following result extends
Theorem~\ref{LEM: linear star} to expanded trees. Its exact conclusion
requires $B_0=\emptyset$; the other cases presently give bounds.}

\begingroup

\begin{thm}\label{THM: Main}
Let $r\ge5$, and let $T$ be a tree with color classes $A,B$,
where $a=|A|\le |B|$. Suppose that
$k=\min_{x\in A}d_T(x)\ge3$ is odd. Define
\[
 A_0=\{x\in A:d_T(x)=k\},\qquad
 B_0=\{y\in B:|N_T(y)\cap A_0|\ge2\}.
\]
For all sufficiently large $n$,
\begin{equation}\label{EQ: tree odd bounds}
 |\mathcal L_1(n,r,k,a)|
 \le \ex_r(n,T^{(r)})
 \le |\mathcal L_2(n,r,k,a)|.
\end{equation}

If $B_0=\emptyset$, then
\[
 \ex_r(n,T^{(r)})=|\mathcal L_2(n,r,k,a)|,
\]
and the unique extremal $r$-graph, up to isomorphism, is
$\mathcal L_2(n,r,k,a)$.

If $B_0\ne\emptyset$, put
\[
 b=\min_{y\in B_0}d_T(y)-2.
\]
Since $d_T(y)\le a$ for $y\in B$, we have $0\le b\le a-2$.
When $b\ge1$, for every choice of $R_{b-1}\in R(a-1,b-1)$ on $W$,
the lower bound in \eqref{EQ: tree odd bounds} can be strengthened to
\[
 \ex_r(n,T^{(r)})\ge
 \max\bigl\{|\mathcal L_1(n,r,k,a)|,
             |\mathcal L_2(n,r,k,a;R_{b-1})|\bigr\}.
\]
\end{thm}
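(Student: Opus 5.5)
The proof has three parts: two lower-bound constructions, a single upper bound, and a uniqueness argument when $B_0=\emptyset$. The approach is to (i) verify that the two constructions are $T^{(r)}$-free, (ii) prove the upper bound $|\mathcal L_2(n,r,k,a)|$ with the delta-system method plus stability, reducing to the linear-star case Theorem~\ref{LEM: linear star}, and (iii) show that equality forces the $2K_k$ core structure, and is achievable only when $B_0=\emptyset$.

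\emph{Lower bounds.} We embed $T$ into $\mathcal L_1(n,r,k,a)$ or $\mathcal L_2(n,r,k,a;R_{b-1})$ with $|W|=a-1$ and argue by contradiction.
\begin{itemize}
\item Every edge of a copy of $T^{(r)}$ meets $A$ in exactly one vertex. Since $|W|<a$, some vertex $x\in A$ is mapped outside $W$.
\item Each of the $d_T(x)\ge k$ edges at $x$ is either an $S_W$-edge through a neighbour in $W$, or an edge outside $W$. The latter must lie in $\mathcal F_i$, so its core trace induces no isolated vertex of $G_i$.
\item A counting argument in $G_i$ gives the contradiction: the linear star at $x$ needs $k$ pairwise disjoint (apart from $x$) core traces, each containing a $G_i$-edge at $x$. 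This is impossible because $d_{G_i}(x)=k-1$, and this is exactly the Chung--Frankl obstruction.
\item Neighbours of $x$ placed in $W$ use up vertices of $W$, and the remaining $A$-vertices must cover $W$-edges. A pigeonhole over $A$ versus $|W|=a-1$ finishes the freeness check for $\mathcal L_1$ and $\mathcal L_2$.
\item For $R_{b-1}$, an $S_W(R_x)$-edge with $|F\cap W|\ge2$ needs an $R_{b-1}$-edge inside. Two $A_0$-vertices sharing a $B_0$-neighbour $y$ force $y$, of degree at least $b+2$, to need degree at least $b$ inside $W$. Since $\Delta(R_{b-1})\le b-1$, this is impossible.
\end{itemize}

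\emph{Upper bound.} Let $\mathcal H$ be $T^{(r)}$-free with $|\mathcal H|\ge|\mathcal L_1(n,r,k,a)|$.
\begin{itemize}
\item Apply the Füredi delta-system decomposition. A subfamily $\mathcal H^*\subseteq\mathcal H$ with $|\mathcal H\setminus\mathcal H^*|=O(n^{r-2})$ is $r$-partite and homogeneous, with an intersection structure closed under intersection.
\item Since $T^{(r)}$ is a 2-reducible linear $r$-tree, no kernel of size $\ge2$ in $\mathcal H^*$ can be "full" beyond the $r$-tree embedding lemma of Füredi–Jiang.
\item The set $W$ of vertices of degree $\Omega(n^{r-1})$ has size at most $a-1$; otherwise $T^{(r)}$ embeds greedily, placing $A$ on $W$.
\item Standard shifting/stability then shows $|W|=a-1$ exactly, and every vertex outside $W$ has degree $O(n^{r-2})$ in $\mathcal H-S_W$.
\item $\mathcal H'=\mathcal H\setminus S_W$ is $\mathcal L_k^r$-free up to a bounded number of vertices. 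Otherwise a linear $k$-star centered outside $W$, together with the $a-1$ high-degree vertices of $W$ embedding the rest of $A$ greedily using leftover freedom, yields $T^{(r)}$; here $k=\min d_T$ on $A$ is used.
\item Theorem~\ref{LEM: linear star}'s proof then bounds $|\mathcal H'|$ by $|\mathcal F_2|$ on $n-a+1$ vertices, giving $|\mathcal H|\le|S_W|+|\mathcal F_2|=|\mathcal L_2(n,r,k,a)|$.
\end{itemize}

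\emph{Equality when $B_0=\emptyset$.} Equality forces $\mathcal H\supseteq S_W$ up to few edges. By uniqueness in Theorem~\ref{LEM: linear star}, $\mathcal H'\cong\mathcal F_2$, and full $S_W$ then gives $\mathcal H\cong\mathcal L_2(n,r,k,a)$. We must also check that $\mathcal L_2$ itself is $T^{(r)}$-free when $B_0=\emptyset$.
\begin{itemize}
\item In $\mathcal L_2$, all edges meeting $W$ are present, so a copy could place one $A_0$-vertex $x$ in a $K_k$ of $G_2$, whose degree there is $k-1$. The edge it lacks must be supplied through a vertex of $W$.
\item Two such $A_0$-vertices would both need to route through a shared $B$-neighbour inside $W$, or else exhaust $W$.
\item Since $B_0=\emptyset$, distinct $A_0$-vertices have disjoint neighbourhoods. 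Counting the $a-1$ slots of $W$ against the $a$ vertices of $A$ then gives the contradiction.
\end{itemize}

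\emph{Main obstacle.} The hardest step is reducing to the $\mathcal L_k^r$-free situation with exact error terms: controlling the $O(n^{r-2})$ edges meeting $W$ in two or more vertices, and those outside $W$, simultaneously. The first attempt will be a weighted double count comparing the "defect" from $S_W$ against any gain over $|\mathcal F_2|$. Each missing $S_W$-edge costs $\Theta(1)$, while each extra structure outside $W$ lets us embed $T^{(r)}$ unless it is compensated.
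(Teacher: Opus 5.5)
Your architecture matches the paper's: freeness of the Construction~A variants, an $(a-1)$-set $W$ from F\"uredi--Jiang stability, and $|S_W|$ plus Theorem~\ref{LEM: linear star} on the $n-a+1$ vertices outside $W$, with uniqueness inherited from that theorem. However, the step that carries the upper bound does not work as stated. You cannot show that $\mathcal H\setminus S_W$ is $\mathcal L_k^r$-free ``up to a bounded number of vertices'' and then invoke Theorem~\ref{LEM: linear star}, for two reasons. First, extending a linear $k$-star centred at $\phi(x)$ needs the image of each leaf $b_i$ to lie in a rich part of the common link of $W$, since the other $A$-neighbours of $b_i$ sit in $W$. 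The vertices failing this are controlled only up to lower-order error, not boundedly. Second, Theorem~\ref{LEM: linear star} is exact and says nothing about a family that is merely close to $\mathcal L_k^r$-free.

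Relatedly, $a$ vertices of degree $\Omega(n^{r-1})$ do not let you embed $T^{(r)}$ greedily. Two vertices whose links live on disjoint halves of $[n]$ give a $T^{(r)}$-free example for any $T$ with $a=2$. The input needed is F\"uredi--Jiang's stability theorem, applied with $\sigma(T^{(r)})=a$ (via Hall), which yields a nearly complete \emph{common} link $\mathcal L_{\mathcal F}(W)$.

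The missing idea is to split the edges avoiding $W$ according to $|F\cap V(\mathcal L^*)|$, where $\mathcal L^*$ is the part of $\mathcal L_{\mathcal F}(W)$ covered by copies of $\mathcal K_{r-1}^{(r-1)}(s)$. Edges with $|F\cap V(\mathcal L^*)|\ge 2$ form an \emph{exactly} $\mathcal L_k^r$-free family: each petal supplies a leaf image in $V(\mathcal L^*)$, and the multipartite witnesses extend the embedding through $W$. So Theorem~\ref{LEM: linear star} applies verbatim to them. Edges with $|F\cap V(\mathcal L^*)|\le 1$ are absorbed by the comparison lemma, Lemma~\ref{THM: construction}(2), applied with $\mathcal D=\partial_{r-1}(\mathcal F_{\le 1})$ (whose $(r-2)$-shadow misses that of $\mathcal L^*$) and Lemma~\ref{LEM:tree-shadow bound}. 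It gives $|\mathcal F_W\cup\mathcal F_{\le1}|\le|S_W|$, with equality only if $\mathcal F_{\le1}=\emptyset$ and $\mathcal F_W=S_W$. Your closing ``weighted double count'' is a placeholder for exactly this trade-off. Without it neither the bound nor the equality case is proved: ``$\mathcal H\supseteq S_W$ up to few edges'' has to become $\mathcal F_W=S_W$.

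The lower-bound verification has a gap of the same kind: a pigeonhole of $A$ against the $a-1$ slots of $W$ cannot finish it. An edge at $x$ may meet $W$ only in a private vertex. More importantly, one $B$-vertex placed in $W$ can serve several vertices of $A\setminus W$. This is exactly how $T^{(r)}$ embeds in $\mathcal L_2(n,r,k,a)$ when $B_0\ne\emptyset$, so your claim that the pigeonhole settles $\mathcal L_2$ is false in general. Even with $B_0=\emptyset$ the slot count can be met: take a path $x_1y_1x_2y_2x_3$ in $T$ with $x_1,x_3\in A_0$, $d_T(x_2)=k+1$, and $W=\{y_1,y_2\}\cup(A\setminus\{x_1,x_2,x_3\})$.

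What excludes such configurations is the size of the core. Write $U=A\setminus W$ and $Q$ for the $B$-vertices placed in $W$. The slot count gives $|U|\ge|Q|+1$, plus one for each private vertex of $W$ used. The $U$--$Q$ edges form a forest, so the edges at $U$ avoiding $W$ yield a forest inside $S$ with at least $(k-2)|Q|+k$ edges. Since $|S|\le 2k$:
\begin{itemize}
\item $|Q|\ge2$ is ruled out, with a separate check inside $2K_3$ when $k=3$.
\item $|Q|=0$ falls to the slot count together with $\Delta(G_i)=k-1$.
\item $|Q|=1$ forces $U=\{x_1,x_2\}\subseteq A_0$, both adjacent to $y\in Q$, spanning $2K_{1,k-1}$ on $2k$ core vertices. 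This is impossible in $G_1$, where $|S|=2k-1$, and in $G_2$ it forces $y\in B_0$.
\end{itemize}
You also have to treat $A$-vertices placed outside $S\cup W$. Your bound $d_{G_i}(x)=k-1$ only applies to $x\in S$; the paper excludes such vertices by a matching count in the auxiliary forest $\widetilde T$. Your $R_{b-1}$ step is the correct final step, but it presupposes this structure. In particular, it needs $W$ to be filled by $y$ and $A\setminus\{x_1,x_2\}$, so that each remaining edge at $y$ has $W$-trace exactly $\{y,z\}$.
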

\endgroup

{\noindent\textbf{Remark B.}
The exact case includes stars $K_{1,k}$ with odd $k$, and the tree
obtained by identifying one leaf of $K_{1,k}$ with one leaf of
$K_{1,k+1}$: its smaller color class consists of the two centers,
and only one of them has minimum degree, so $B_0=\emptyset$.
} %

%{\color{blue}The rest of the paper is organized as follows.
%Section~2 collects the homogeneous-family and centralized-family
%tools used in the upper bound. Section~\ref{SEC: proof main} proves
%Theorem~\ref{THM: Main}, first verifying the lower-bound
%constructions and then establishing the stability-based upper bound.
%Section~\ref{SEC: remarks} presents a counterexample for $k=4$
%and three open problems.}

The rest of the paper is organized as follows.
Section~2 collects the necessary tools from the $\Delta$-system
method and proves Theorems~\ref{LEM: linear star}
and~\ref{LEM: two linear star}.
Section~\ref{SEC: proof main} proves Theorem~\ref{THM: Main},
first verifying the lower-bound constructions and then establishing
the upper bound and the equality characterization.
Section~\ref{SEC: remarks} presents a counterexample for $k=4$
and poses open problems concerning the remaining cases for linear
stars and expanded trees.

\section{Centralized families and the proof of Theorems~\ref{LEM: linear star} and \ref{LEM: two linear star}}

In this section, we use $\Delta$-system method to reduce the problem of embedding a linear tree and its subgraphs into a hypergraph $\mathcal{F}$ to one where $\mathcal{F}$ belong to the so-called centralized family.

Given a hypergraph $\mathcal{F}$ and a subset $W\subseteq V(\mathcal{F})$, the {\it kernel degree} of $W$, denoted by $\deg^*_{\mathcal{F}}(W)$, is
$$\deg^*_{\mathcal{F}}(W) = \max\{s : \text{there is a $\Delta$-system of size $s$ with kernel $W$ in $\mathcal{F}$}\}.$$
%We also write $\deg_{\mathcal F}^*=\deg_{\mathcal F}^*(W)$.
In other words, $\deg^*_{\mathcal{F}}(W)$ is the matching number of $\{F\setminus W : W\subseteq F\in E(\mathcal{F})\}$.
An  $r$-graph $\mathcal{F}$ is called {\it $r$-partite} if there is a partition $(X_1,\ldots, X_r)$, called an $r$-partition, of the vertex set $V(\mathcal{F})$ such that every edge  contains exactly one element of each part $X_i$.
For $F\in\mathcal F$ and $I\subseteq[r]$, write
\[
 F\langle I\rangle=F\cap\bigcup_{i\in I}X_i,\qquad
 \mathcal F\langle I\rangle=\{F\langle I\rangle:F\in\mathcal F\}.
\]
These are projections onto the parts indexed by $I$; write
$F\langle i\rangle=F\langle\{i\}\rangle$ for a single part.
%In contrast, square brackets denote induced restriction on a vertex set:
%$\mathcal F[Z]=\{F\in\mathcal F:F\subseteq Z\}$.}

\begin{lem}[The homogeneous subfamily lemma, see~\cite{Fu83} and~\cite{FJ15}]\label{LEM: homogeneous family}
	For any positive integers $s$ and $r$, there is a positive constant $c=c(r,s)$ such that for every family $\mathcal{F}\subseteq {{[n]}\choose r}$, there exist $\mathcal{F}^*\subseteq\mathcal{F}$ with $|\mathcal{F}^*|\ge c|\mathcal{F}|$ and some $\mathcal{J}\subseteq 2^{[r]}\setminus\{[r]\}$ (called the intersection pattern) such that:
		
		(1) $\mathcal{F}^*$ is $r$-partite, together with an $r$-partition $(X_1,\dots,X_r)$.
		
		(2) For every $F\in\mathcal{F}^*$ and $I\in\mathcal{J}$,
		$\deg_{\mathcal{F}^*}^*({F\langle I\rangle})\ge s$.  If
		$I\subsetneq[r]$ and $I\notin\mathcal{J}$, then there is no
		$F'\in\mathcal{F}^*$ such that $F\cap F'={F\langle I\rangle}$.
		
		(3) $\mathcal{J}$ is closed under intersection, i.e., for $I, I'\in\mathcal{J}$, we have $I\cap I'\in\mathcal{J}$ as well.
	
\end{lem}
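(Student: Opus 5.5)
This is Füredi's homogeneous subfamily lemma, so I would prove it in three stages: make the family $r$-partite, make it homogeneous with respect to intersection patterns, and then derive closure under intersection.

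\emph{Stage 1: $r$-partiteness.} Choose a random partition of $[n]$ into $r$ parts, or use the standard averaging argument. The subfamily of edges that meet every part exactly once has expected size $\frac{r!}{r^r}|\mathcal F|$. Fix such an $\mathcal F_0$ together with its partition $(X_1,\dots,X_r)$.

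\emph{Stage 2: homogeneity, by iterated deletion.} For $F\in\mathcal F_0$, define its pattern
\[
 \mathcal J(F)=\{I\subsetneq[r]:\deg^*_{\mathcal F_0}(F\langle I\rangle)\ge s\}.
\]
Since there are at most $2^{2^r}$ patterns, some pattern $\mathcal J$ is shared by at least a $2^{-2^r}$ fraction of $\mathcal F_0$; pass to that subfamily $\mathcal F_1$.

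Passing to a subfamily can lower kernel degrees, so I would iterate with a larger threshold. Use thresholds $s_0>s_1>\dots>s_{2^r}=s$, where each $s_{j}$ is much larger than $s_{j+1}$ in terms of $r$. At each step, remove the edges $F$ for which some $I\in\mathcal J$ has small kernel degree in the current family. The key estimate is this: if $\deg^*_{\mathcal G}(K)<t$, then the number of edges of $\mathcal G$ containing $K$ with $F\cap X=K$ is bounded. To see it, take a maximal sunflower with kernel $K$; it has fewer than $t$ petals. Every other edge through $K$ meets the union of its petals outside $K$, a set of size less than $rt$. This reduces the count to kernel $K\cup\{v\}$, and induction on $r-|K|$ gives at most $\phi$-type bounds times edges with larger kernels.

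The bookkeeping I would actually use is Füredi's. Note that $\deg^*(F\langle I\rangle)<s$ for some $I\notin\mathcal J$ implies $F\langle I\rangle$ lies in a ``small kernel'' configuration. Count the edges $F$ for which there exists $I$ with $\deg^*(F\langle I\rangle)<s_j$ and $F\langle I\rangle$ is a maximal such set. Any family of edges, each containing a set of kernel degree $<t$ of size $|I|$, can be covered by a bounded constant times $\ldots$ — but this is where the real difficulty lies. The cleaner route is to run the argument inductively on $r$ as in \cite{Fu83}. Since the lemma is cited from \cite{Fu83,FJ15}, I would follow their proof: choose the pattern by majority, then delete ``bad'' edges, and show by the sunflower bound that the deleted edges are a small fraction once the thresholds grow fast. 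This yields $c=c(r,s)>0$.

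The \emph{no-intersection} part of (2) follows from the first part at a high threshold. If $F\cap F'=F\langle I\rangle$ for some $F'$ in the final family, then in that family $\deg^*(F\langle I\rangle)\ge 1$. Strengthening this to $\ge s$ requires the standard trick: keep only those $I$ where every $F'$ meeting exactly there belongs to a large sunflower. This is again guaranteed by choosing the threshold of the last round larger than $r\cdot s$ times the previous one. So whenever an intersection $F\langle I\rangle$ is realized, $I\in\mathcal J$.

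\emph{Stage 3: closure under intersection (3).} Take $I,I'\in\mathcal J$ and $F\in\mathcal F^*$. Because $\deg^*(F\langle I\rangle)\ge s$ with $s\ge r+1$, we can pick a petal $F_1\supseteq F\langle I\rangle$ with $F_1\cap F=F\langle I\rangle$. Because $\deg^*(F_1\langle I'\rangle)\ge s$, we can pick $F_2\supseteq F_1\langle I'\rangle$ avoiding $F\setminus F_1$. Using $r$-partiteness, we get $F\cap F_2=F\langle I\cap I'\rangle$. By the no-intersection clause, this forces $I\cap I'\in\mathcal J$.

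I expect the main obstacle to be the deletion count in Stage 2. The difficulty is ensuring that repeated deletions preserve a constant fraction while the pattern stays stable.
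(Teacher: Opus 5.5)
\textbf{Verdict: genuine gap.} The paper does not prove this lemma; it quotes it from \cite{Fu83,FJ15}. Your Stages 1 and 3 are essentially fine. In Stage 3, $F_2$ must avoid all of $(F\cup F_1)\setminus F_1\langle I'\rangle$, not just $F\setminus F_1$; otherwise a vertex of $F\langle I\setminus I'\rangle$ may remain in $F\cap F_2$. This needs $s>2r$, which is harmless, since the lemma for a larger $s$ implies it for a smaller one.

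The gap is Stage 2. It is the entire content of the lemma, you leave it to the literature, and the mechanism you do sketch cannot work. Your ``key estimate'' fails. The condition $\deg^*_{\mathcal G}(K)<t$ only says that every edge through $K$ contains $K\cup\{v\}$ for one of fewer than $rt$ vertices $v$. The number of such edges is unbounded: all edges through a single fixed $K\cup\{v\}$ already give $\deg^*(K)=1$. So you cannot show that the peeled edges form a small fraction, and in general they do not. Removing a few petals can make a kernel poor and force the removal of arbitrarily many edges through it. Likewise, a larger threshold in a last round does nothing for the second clause of (2). A set of kernel degree $2$ can perfectly well be an intersection $F\cap F'$, and excluding such intersections needs a \emph{selection} step, not a threshold.

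The missing idea is to use the peeled edges rather than trying to bound them, and to run a descent on the set $\mathcal P$ of permitted intersection patterns.
\begin{itemize}
\item Maintain an $r$-partite family $\mathcal G$ in which every pairwise intersection has pattern in $\mathcal P$. This property is hereditary. Start from $\mathcal P=2^{[r]}\setminus\{[r]\}$ and fix $t=\max\{s,2r+1\}$.
\item Delete, one at a time, edges $F$ having some $F\langle I\rangle$ with $I\in\mathcal P$ of kernel degree $<t$ in the \emph{current} family.
\item If at least half of $\mathcal G$ survives, the survivors satisfy (1) and (2) with $\mathcal J=\mathcal P$, and (3) follows from your Stage 3.
\item Otherwise, fix an offending $I$ recorded for at least a $2^{-r}$ fraction of the deleted edges. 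For each offending set $A$, take a maximal sunflower with kernel $A$ at the first moment $A$ became poor. Let $C_A$ be the union of its petals minus $A$, so $|C_A|<rt$. Every such deleted $F$ with $F\langle I\rangle=A$ was present at that moment, because poorness persists as the family shrinks. Hence each such $F$ meets $C_A$.
\item Label each $C_A$ injectively by $[rt]$ and pigeonhole on the label set of $F\cap C_A$. In the resulting class, which has size at least $2^{-(1+r+rt)}|\mathcal G|$, suppose two edges satisfy $F\cap F'=F\langle I\rangle=A$. Then they have the same nonempty trace on $C_A$, and $C_A$ is disjoint from $A$, so $F\cap F'\supsetneq A$, a contradiction.
\item So $I$ is never realized in that class, and $\mathcal P$ loses an element.
\end{itemize}
This can happen at most $2^r$ times, after which one is in the first case. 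Each round loses at most a factor $2^{-(1+r+rt)}$, giving a constant $c(r,s)>0$.
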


We call $\mathcal{F}^*$ in Lemma~\ref{LEM: homogeneous family} an {\it $(r,s)$-homogeneous family}  with intersection pattern $\mathcal{J}$.
Given {$\mathcal{J}\subseteq 2^{[r]}\setminus\{[r]\}$}, we define the {\it rank of $\mathcal{J}$}, denoted by $r(\mathcal{J})$, as
$$r(\mathcal{J})=\min\{|D|  : D\subseteq [r], \text{ there is no $B\in\mathcal{J}$ such that } D\subseteq B\}.$$
Let $\mathcal{F}\subseteq {[n]\choose r}$ be an $r$-partite family with an $r$-partition $(X_1,\ldots, X_r)$. Given
any subset $S\subseteq [n]$, its {\it pattern}, denoted by $\Pi(S)$, is defined as
$$\Pi(S) =\{i : S\cap X_i\not=\emptyset\}\subseteq [r].$$
In other words, the pattern of $S$ records which parts in the given $r$-partition that $S$ meets.
F\"uredi, Jiang, Seiver~\cite{FJS14} gave the following proposition, called the rank bound.
\begin{prop}[The rank bound~\cite{FJS14}]\label{PROP: rank bound}
	Let $r,s$ be positive integers and $\mathcal{F}^*$ be an $(r,s)$-homogeneous family on $n$ vertices with intersection pattern $\mathcal{J}$. If $r(\mathcal{J})=p$ then $|\mathcal{F}^*|\le {n\choose p}$.
\end{prop}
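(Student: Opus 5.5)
The rank bound is the last statement before the cut, so this is the result to prove: if $\mathcal F^*$ is $(r,s)$-homogeneous with intersection pattern $\mathcal J$ and $r(\mathcal J)=p$, then $|\mathcal F^*|\le\binom np$. The idea is to build an injection from $\mathcal F^*$ into $\binom{[n]}{p}$.

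\textbf{Step 1 (choice of the witness set).} Fix a set $D\subseteq[r]$ with $|D|=p$ that lies in no member of $\mathcal J$; one exists by the definition of $r(\mathcal J)$. If no such $D$ exists with $p\le r$, the statement is vacuous or trivial. Note that $D=[r]$ always works because $[r]\notin\mathcal J$, so in any case $p\le r$. The candidate injection is
\[
 \varphi:\mathcal F^*\to\binom{[n]}{p},\qquad \varphi(F)=F\langle D\rangle .
\]
Since $\mathcal F^*$ is $r$-partite and every edge meets each part $X_i$ in exactly one vertex, $F\langle D\rangle$ has exactly $p$ elements.

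\textbf{Step 2 (injectivity).} Suppose $F\ne F'$ in $\mathcal F^*$ satisfy $F\langle D\rangle=F'\langle D\rangle$. Both sets are $r$-partite edges, so their intersection is a projection: $F\cap F'=F\langle I\rangle$, where $I=\Pi(F\cap F')\subsetneq[r]$. The inclusion is proper because $F\ne F'$. Property (2) of Lemma~\ref{LEM: homogeneous family} says that if $I\notin\mathcal J$, no $F'$ can satisfy $F\cap F'=F\langle I\rangle$. Hence $I\in\mathcal J$. On the other hand, $F\langle D\rangle\subseteq F\cap F'$ gives $D\subseteq I$, which contradicts the choice of $D$. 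So $\varphi$ is injective, and $|\mathcal F^*|\le\binom np$.

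\textbf{Main obstacle.} The only delicate point is the exact reading of condition (2). One must check that it applies to every proper $I\notin\mathcal J$, including the case $I=\emptyset$, where $F\cap F'=\emptyset$. If $\emptyset\notin\mathcal J$ were allowed, disjoint pairs would be forbidden; but then $\emptyset$ could not be covered in the rank definition either. In that case $p=0$ and the family has at most one edge, which is consistent with the bound. So the argument goes through uniformly, and I do not expect a real difficulty beyond verifying these degenerate cases.
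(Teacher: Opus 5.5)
Your proof is correct. It is the standard Füredi--Jiang--Seiver argument, which the paper cites without reproducing: for a $p$-set $D$ contained in no member of $\mathcal J$, the map $F\mapsto F\langle D\rangle$ is injective, because distinct edges satisfy $\Pi(F\cap F')\in\mathcal J$. One small correction to your aside: $\emptyset\notin\mathcal J$ does not force $p=0$ (only $\mathcal J=\emptyset$ does, since $\emptyset$ lies in every member of $\mathcal J$), but Step~2 already handles $I=\emptyset$ uniformly, so the proof is unaffected.
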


We now recall extremal results for the sunflower function
$\phi(r,s)$ defined in the introduction.
Clearly, $\phi(1,s)=s-1$. When $r=2$, Abbott, Hanson, and Sauer~\cite{AHS72} gave the exact value of $\phi(2,s)$.
\begin{thm}[\cite{AHS72}]\label{LEM: AHS}
	Let $G$ be a $\{K_{1,k}, kK_2\}$-free graph. Then
	$$e(G)\le \phi(2, k)=\begin{cases}
		k^2-k & \text{if } k \text{ is odd},\\
		k^2-\frac{3}{2}k & \text{if } k \text{ is even}.
	\end{cases}$$
	Moreover, when $k$ is odd, equality holds if and only
	if $G\cong2K_k$, apart from isolated vertices.  When $k$ is even,
	the displayed bound has, in general, several nonisomorphic equality
	cases; in particular equality holds for
	$G\in R(2k-1,k-1)$ and for
	$G\cong H\sqcup K_{k-1}$ for some $H\in R(k+1,k-1)$,
	where $\sqcup$ denotes vertex-disjoint union.
\end{thm}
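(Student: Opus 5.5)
The plan is to bound $e(G)$ through the structure of a maximum matching. The two forbidden subgraphs give exactly two constraints: $\Delta(G)\le k-1$ because $G$ is $K_{1,k}$-free, and $\nu(G)\le k-1$ because $G$ is $kK_2$-free. A quick sanity check is Vizing's theorem: it gives $\chi'(G)\le k$, and each colour class is a matching of size at most $k-1$, so $e(G)\le k(k-1)$. This settles the odd-$k$ bound but gives neither the equality case nor the even-$k$ improvement. I will therefore use the Gallai–Edmonds decomposition $V(G)=D\cup A\cup C$ instead. Here every component of $G[D]$ is factor-critical, $G[C]$ has a perfect matching, and every maximum matching matches $A$ into distinct components of $D$, covers $C$, and near-perfectly matches each $D$-component. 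This gives
\[
\nu(G)=|A|+\tfrac12|C|+\sum_{K}\tfrac{|K|-1}{2},
\]
where $K$ runs over the components of $G[D]$.

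Next I charge every edge of $G$ to one of these pieces of the matching and compare with its share of $\nu$.

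\begin{itemize}
\item \emph{Edges meeting $A$.} There are at most $(k-1)|A|$ of them, a ratio of $k-1$ per unit of $\nu$.
\item \emph{Edges inside $C$.} Every vertex has degree at most $k-1$, so there are at most $\tfrac12|C|(k-1)$ of them, again ratio $k-1$.
\item \emph{Edges inside a component $K$ with $|K|=2t+1$.} There are at most $\min\{\binom{2t+1}{2},\lfloor (2t+1)(k-1)/2\rfloor\}$ of them, to be compared with its share $t$ of $\nu$.
\end{itemize}

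There are no edges between $C$ and $D$, or between different $D$-components, so this accounts for all of $E(G)$. For a component $K$, if $2t+1\le k$ the ratio is at most $2t+1\le k$. If $2t+1>k$ the ratio is at most $(k-1)(2t+1)/(2t)$, which is strictly less than $k$ once $2t\ge k$. Hence $e(G)\le k\,\nu(G)\le k(k-1)$ in all cases.

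For odd $k$, equality forces every unit of $\nu$ to be charged at ratio exactly $k$. Since $A$ and $C$ only reach ratio $k-1$, this means $A=C=\emptyset$. Every nontrivial $D$-component must then be $K_{2t+1}$ with $2t+1=k$, i.e.\ a copy of $K_k$. The identity $\sum t=k-1$ then forces exactly two of them, so $G\cong 2K_k$ up to isolated vertices. Conversely, $2K_k$ contains no $K_{1,k}$ and no $kK_2$, so the bound is attained.

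For even $k$, a clique component of size at most $k$ must have odd size, hence size at most $k-1$, so its ratio is at most $k-1$. Thus every piece has ratio at most $k-1$ except large $D$-components, those with $2t+1\ge k+1$, i.e.\ $t\ge k/2$. Such a component exceeds the baseline $(k-1)t$ by
\[
\lfloor (2t+1)(k-1)/2\rfloor-(k-1)t=\lfloor (k-1)/2\rfloor=\tfrac{k-2}{2}.
\]
Each large component uses at least $k/2$ units of $\nu$, and $2\cdot k/2=k>k-1$, so at most one large component can occur. Therefore
\[
e(G)\le (k-1)^2+\tfrac{k-2}{2}=k^2-\tfrac32k.
\]
Sharpness is checked directly. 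A graph $G\in R(2k-1,k-1)$ has maximum degree $k-1$ and $\nu\le\lfloor(2k-1)/2\rfloor=k-1$, and the degree sum $(2k-1)(k-1)-1$ gives exactly $k^2-\tfrac32k$ edges. Similarly, $H\sqcup K_{k-1}$ with $H\in R(k+1,k-1)$ has $\nu=\tfrac k2+\tfrac{k-2}{2}=k-1$ and the right edge count. The statement asks only that these be equality cases, so no full classification is needed for even $k$.

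The main obstacle is the careful accounting in the even case: using the floor to extract exactly $\tfrac{k-2}{2}$ of excess, and making sure edges between $A$ and $D$ are not counted twice. To avoid double counting, I will charge each edge to an $A$-endpoint whenever it has one, so that $A$-edges absorb all edges incident to $A$ and the $D$-component count uses only edges inside $D$.
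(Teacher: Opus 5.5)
The paper gives no proof of this statement. It is quoted from Abbott, Hanson and Sauer \cite{AHS72} and used as a black box, and only the odd-$k$ part (the bound together with the uniqueness $G\cong 2K_k$) is actually invoked, in the proof of Theorem~\ref{LEM: linear star}. So there is no internal argument to compare against. Judged on its own, your proof is correct and covers everything the statement asserts.

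The individual steps all check out:
\begin{itemize}
\item the Gallai--Edmonds formula $\nu=|A|+\tfrac12|C|+\sum_K t_K$;
\item the absence of $C$--$D$ edges and of edges between distinct $D$-components;
\item the per-unit ratios: $k-1$ for $A$ and for $C$; at most $2t+1$ for a $D$-component of order $2t+1\le k$; strictly less than $k$, namely $(k-1)(2t+1)/(2t)$, once $2t\ge k$;
\item the odd equality analysis, which forces $A=C=\emptyset$ and exactly two components $K_k$;
\item the even-$k$ excess $\lfloor (k-1)/2\rfloor=(k-2)/2$, together with the observation that two components of order at least $k+1$ would need $\sum t\ge k>\nu$.
\end{itemize}

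Compared with the Vizing sanity check, your accounting handles both parities and the odd-$k$ uniqueness in a single charging scheme. It also extends verbatim to separate constraints $\Delta(G)\le d-1$ and $\nu(G)\le m-1$, which is the classical degree--matching problem of Chv\'atal and Hanson.

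Two small points are worth making explicit. First, for even $k$ both $(2k-1)(k-1)$ and $(k+1)(k-1)$ are odd. Parity therefore forces the exceptional vertex of degree $k-2$ to exist, and this is what makes your edge counts for the two extremal families exact. Second, for $k\ge4$ the two listed families are genuinely nonisomorphic, even apart from isolated vertices. A member of $R(2k-1,k-1)$ has exactly one vertex of degree $k-2$, whereas $H\sqcup K_{k-1}$ has $k$ of them. For $k=2$ both reduce to a single edge, which is why the statement says ``in general''.
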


The above result was further extended by Wang, Hou, Liu, Ma~\cite{WHLM20} to $\{K_{1,k}, kK_2, 2K_{1,k-1}\}$-free graphs.
\begin{thm}[\cite{WHLM20}]\label{LEM: 2K_{1,k-1}}
	Let $G$ be a $\{K_{1,k}, kK_2, 2K_{1,k-1}\}$-free graph.		
	Then $$e(G)\le g_1(k)=\begin{cases}
		k^2-\frac{3}{2}k+\frac{1}{2} & \text{if } k \text{ is odd},\\
		k^2-\frac{3}{2}k & \text{if } k \text{ is even}.
	\end{cases}$$
	Furthermore, the equality holds if and only if $G\in R(2k-1, k-1)$ when $k$ is odd, and $G\in R(2k-1, k-1)$ or $G\cong H\sqcup K_{k-1}$ for some $H\in R(k+1,k-1)$ when $k$ is even. If we do not care about the trivial components, the optimal graphs are determined completely.
\end{thm}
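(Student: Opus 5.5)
The approach is a Gallai--Edmonds decomposition of $G$ combined with a degree-counting argument. The hypotheses give $\Delta(G)\le k-1$ (no $K_{1,k}$) and $\nu(G)\le k-1$ (no $kK_2$). The extra condition ($2K_{1,k-1}$-free) will be used only to restrict how many vertices can have the maximum degree $k-1$, and how such vertices sit relative to each other.

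\textbf{Step 1 (decomposition).} Write $V(G)=D\cup A\cup C$ as in the Gallai--Edmonds theorem, and let $D_1,\dots,D_q$ be the components of $G[D]$. Each $D_i$ is factor-critical with $|D_i|=2m_i+1$, $G[C]$ has a perfect matching, and
\[
\nu(G)=|A|+\sum_i m_i+|C|/2\le k-1 .
\]
\textbf{Step 2 (edge count).} Every edge meets $A$ or lies inside some $D_i$ or inside $C$. Hence
\[
e(G)\le (k-1)|A|+\sum_i \min\Bigl\{\binom{2m_i+1}{2},\Bigl\lfloor\frac{(2m_i+1)(k-1)}{2}\Bigr\rfloor\Bigr\}+e(G[C]) .
\]
Compare each term with its share of $\nu$: a vertex of $A$ yields at most $k-1$ edges per unit of $\nu$, while a block of $C$ yields at most about $k-1$ per unit. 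A factor-critical block with $m_i$ close to $(k-1)/2$ yields slightly more, roughly $(k-1)(1+1/(2m_i))$ per unit. This reproduces the Abbott--Hanson--Sauer optimization behind Theorem~\ref{LEM: AHS}. For odd $k$ the unconstrained optimum uses two blocks $K_k$ ($m_i=(k-1)/2$), giving $2\binom{k}{2}=k^2-k$.

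\textbf{Step 3 (using $2K_{1,k-1}$-freeness).} This is the main obstacle. Call a vertex of degree $k-1$ \emph{full}. I would first show that if $u,v$ are full and lie in different blocks (two different $D_i$'s, or one in $D\cup C$ and the other far from it), then $2K_{1,k-1}$ embeds greedily, since their neighbourhoods are disjoint. In particular two $K_k$ components, or more generally two blocks each containing a full vertex, are forbidden.

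So at most one block may attain its maximal count $\lfloor (2m+1)(k-1)/2\rfloor$. Every other block loses at least one edge per full vertex it would otherwise have; more precisely, it has essentially no full vertices, so its degree sum drops to at most $(k-2)|D_i|$. Full vertices in $A$ must be handled separately. Two full vertices of $A$ whose neighbourhoods can be chosen disjoint would give $2K_{1,k-1}$. Hence either $|A|\le 1$, or the $A$-vertices share most of their neighbourhood. The latter forces many edges into a set of size about $k$, contradicting $\nu\le k-1$ or the degree bound. I would try to make this rigorous by a Hall-type argument: if no two full vertices have disjoint $(k-1)$-sets of neighbours, then $|N(u)\cup N(v)|\le 2k-3$ for all such pairs.

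\textbf{Step 4 (optimization and equality).} With these constraints, redo the optimization of Step 2. The best configuration becomes a single factor-critical block with $m=k-1$, i.e.\ $2k-1$ vertices and degree sum at most $(2k-1)(k-1)$. For odd $k$ this gives $e(G)\le (2k-1)(k-1)/2=k^2-\tfrac32k+\tfrac12=g_1(k)$. Every competing configuration loses at least one edge. For even $k$ the bound from Theorem~\ref{LEM: AHS} is already $g_1(k)$, so no change is needed there. Equality analysis then forces one block on $2k-1$ vertices with all degrees $k-1$ except possibly one vertex of degree $k-2$ (odd $k$), i.e.\ $G\in R(2k-1,k-1)$. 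For even $k$ it additionally allows the split $H\sqcup K_{k-1}$, in which $K_{k-1}$ has no full vertex, so it is compatible with $2K_{1,k-1}$-freeness. Finally one verifies directly that these graphs avoid all three forbidden subgraphs.
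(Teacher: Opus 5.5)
The paper does not prove this statement; it quotes it from~\cite{WHLM20}, so your proposal has to stand on its own. The Gallai--Edmonds framework with an excess count per unit of $\nu$ can be made to work. As written, however, Step~3 rests on a false claim, and Step~4 asserts something that needs a second argument.

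\textbf{Step 3.} Full vertices in different blocks need \emph{not} have disjoint neighbourhoods, because they can share neighbours in $A$. For $k=5$, take two triangles $x_iy_iz_i$ ($i=1,2$) and vertices $a_1,a_2$, each joined to $x_1,x_2$ and to two private leaves. Then $\Delta=\nu=4$, the graph is $2K_{1,4}$-free, $A=\{a_1,a_2\}$, and $x_1,x_2$ are full vertices in two different $D$-blocks. So ``two blocks each containing a full vertex are forbidden'' is false, and so is ``every other block has essentially no full vertices''.

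The correct use of the hypothesis is simpler. Since $\Delta(G)\le k-1$, every $K_{1,k-1}$ is exactly $N[v]$ for a full vertex $v$. Hence $2K_{1,k-1}$-freeness means that closed neighbourhoods of full vertices pairwise intersect, and no Hall argument is needed. Apply this to vertices that are full \emph{inside} their block, i.e.\ all $k-1$ neighbours lie in the same $D_i$ or the same component of $G[C]$. Two such vertices in different blocks have disjoint closed neighbourhoods, so all blocks but one have internal maximum degree at most $k-2$. For odd $k$ this gives:
\begin{itemize}
\item a restricted $D_i$ has at most $\min\{\binom{2m_i+1}{2},\,m_i(k-2)+\frac{k-3}{2}\}\le m_i(k-1)$ edges, with equality only if $m_i=0$;
\item a restricted component of $G[C]$ on $2c$ vertices has at most $c(k-2)$ edges;
\item the unrestricted block exceeds $k-1$ times its share of $\nu$ by at most $\frac{k-1}{2}$;
\item at most $(k-1)|A|$ edges meet $A$.
\end{itemize}
Summing yields $e(G)\le (k-1)\nu(G)+\frac{k-1}{2}\le g_1(k)$. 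Your separate paragraph on $A$ is not needed for this. Its claim is also false: $K_{2,k-1}$ ($k\ge4$) has two full vertices of $A$ with identical neighbourhoods and $\nu=2$, with no contradiction.

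\textbf{Step 4.} The assertion ``every competing configuration loses at least one edge'' fails at the level of this count. For odd $k$, the graph $K_k\sqcup\frac{k-1}{2}K_{1,k-1}$ is $\{K_{1,k},kK_2\}$-free and has exactly $\binom{k}{2}+\frac{(k-1)^2}{2}=g_1(k)$ edges (e.g.\ $K_3\sqcup K_{1,2}$ for $k=3$). It also satisfies the one-unrestricted-block constraint, since the star centres lie in $A$ and the leaves are singleton blocks.

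Such graphs are excluded only by a second use of $2K_{1,k-1}$-freeness. At equality, $\nu=k-1$, every vertex of $A$ is full, and the unrestricted block is a $D$-block on $2m+1\ge k$ vertices that is internally $(k-1)$-regular, so it has no neighbour in $A$. An $A$-vertex and a vertex of that block then have disjoint closed neighbourhoods. Hence $A=\emptyset$, the negative-excess pieces are absent, $m=k-1$, and $G\in R(2k-1,k-1)$.

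For even $k$ the bound does follow from Theorem~\ref{LEM: AHS}. But the ``only if'' part cannot be read off from the partial equality list quoted there, and checking that the two listed families are free only proves ``if''. The same analysis is needed:
\begin{itemize}
\item at equality there is one block with $m\ge k/2$ that is internally almost $(k-1)$-regular, with a unique deficient vertex $w$;
\item the rest of the $\nu$-budget is filled by full $A$-vertices or a single $K_{k-1}$;
\item full $A$-vertices can attach to the block only at $w$.
\end{itemize}
The block has $2m\ge k$ internally full vertices and $w$ has only $k-2$ neighbours among them, so any $A$-vertex yields a $2K_{1,k-1}$. This leaves exactly $R(2k-1,k-1)$ and $H\sqcup K_{k-1}$ with $H\in R(k+1,k-1)$.
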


For general $r\ge 2$, Erd\H{o}s and Rado~\cite{ER60} showed that $\phi(r,s)\le r!(s-1)^r$ and conjectured that $\phi(r,s)=c^r$ for some absolute constant $c=c(s)$ and $s\ge 3$;  Alweiss,  Lovett, Wu, and Zhang~\cite{ALWZ21} improved the upper bound to $(cs^3\log r\log\log r)^r$ for some constant $c$. 

More recently, Brada\v{c}, Buci\'c, and Sudakov~\cite{BBS23} determined the sunflower Tur\'an number up to a constant factor, allowing the number of petals to grow with $n$. In another direction, Kupavskii and Noskov~\cite{KN24} used spread approximations to enlarge the range of parameters for which the Duke--Erd\H{o}s forbidden-sunflower problem is resolved.
Duke and Erd\H{o}s~\cite{DE77} introduced the sunflower
Tur\'an function $f(n,r,\ell,k)$ defined in the introduction.
Frankl and F\"uredi~\cite{FF87} established an asymptotic bound for
$\ex_r(n,\mathcal L_{k,\ell}^r)$.
\begin{thm}[Theorem 2.7 in~\cite{FF87}]\label{THM: asymptotic}
	For fixed integers $r, \ell, k$ with $r\ge 2\ell+3$, we have  $${\ex_r(n, \mathcal{L}_{k,\ell}^r)}=f(n,r,\ell,k)=(\phi(\ell+1,k)+o(1)){n-\ell-1\choose	r-\ell-1}.$$
\end{thm}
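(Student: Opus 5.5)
The plan is to prove the two inequalities separately: an explicit construction gives the lower bound, and the $\Delta$-system machinery of Lemma~\ref{LEM: homogeneous family} and Proposition~\ref{PROP: rank bound} gives the upper bound.

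\textbf{Lower bound.} Fix an $(\ell+1)$-graph $\mathcal D$ with $\phi(\ell+1,k)$ edges and no sunflower of size $k$, on a vertex set $Y$ of bounded size $m$. Let $\mathcal F$ consist of all $r$-sets $F\subseteq[n]$ with $F\cap Y\in\mathcal D$. Suppose $F_1,\dots,F_k\in\mathcal F$ form a sunflower with kernel $K$, $|K|=\ell$, and put $D_i=F_i\cap Y$.
\begin{itemize}
\item If $D_i=D_j$ for some $i\neq j$, then $D_i\subseteq F_i\cap F_j=K$, which contradicts $|D_i|=\ell+1>\ell$.
\item Otherwise the $D_i$ are distinct and pairwise intersect in exactly $K\cap Y$, so they form a sunflower of size $k$ in $\mathcal D$, which is also a contradiction.
\end{itemize}
Hence $\mathcal F$ is $\mathcal L_{k,\ell}^r$-free. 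Since $m$ is fixed, $|\mathcal F|=\phi(\ell+1,k)\binom{n-m}{r-\ell-1}=(\phi(\ell+1,k)-o(1))\binom{n-\ell-1}{r-\ell-1}$.

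\textbf{Upper bound, step 1: reduction to homogeneous families.} Let $\mathcal F$ be $\mathcal L_{k,\ell}^r$-free and take $s$ large, say $s\ge rk$. Apply Lemma~\ref{LEM: homogeneous family} repeatedly, each time removing the homogeneous subfamily found. This partitions all but $O(n^{r-\ell-2})$ members of $\mathcal F$ into boundedly many kinds of $(r,s)$-homogeneous pieces, indexed by their intersection patterns $\mathcal J$ (in Füredi's intersection-structure form, one may even keep a subfamily of positive proportion).

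For each piece, observe two things.
\begin{itemize}
\item $\mathcal J$ contains no $\ell$-set: a kernel $F\langle I\rangle$ with $|I|=\ell$ and kernel degree $s\ge k$ would give a copy of $\mathcal L_{k,\ell}^r$.
\item If $r(\mathcal J)\le r-\ell-2$, then Proposition~\ref{PROP: rank bound} bounds the piece by $O(n^{r-\ell-2})$.
\end{itemize}
So the main term comes from patterns in which every $(r-\ell-1)$-subset of $[r]$ lies in a member of $\mathcal J$. Using closure under intersection, the absence of $\ell$-sets, and $r\ge2\ell+3$, I would show that such a $\mathcal J$ has a unique minimal structure: there is an $(\ell+1)$-set $C\subseteq[r]$ such that $\mathcal J$ contains every set containing $C$. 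Since $r\ge 2\ell+3$, two large members of $\mathcal J$ must intersect in a set of size $\ge\ell+1$, and the intersection-closed family forces a single common core. Consequently each surviving $F$ has a well-defined center $C(F)=F\langle C\rangle$ of size $\ell+1$.

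\textbf{Upper bound, step 2: the family of centers.} Let $\mathcal C$ be the $(\ell+1)$-graph of centers. Each center has kernel degree $\ge s$ in $\mathcal F$, and these petals can be chosen greedily to avoid any bounded set of vertices. From this I would show that $\mathcal C$ has no sunflower of size $k$ of the relevant type; in particular $\mathcal C$ has no $k$ members through a common $\ell$-set, and no $k$ members that are pairwise disjoint outside a common small set. The count is then
\[
|\mathcal F|\le \sum_{C\in\mathcal C}\deg_{\mathcal F}(C)+O(n^{r-\ell-2}),
\]
which needs to be bounded by $\phi(\ell+1,k)\binom{n-\ell-1}{r-\ell-1}(1+o(1))$.

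\textbf{Main obstacle.} The hardest step is that $\mathcal C$ need not live on a bounded vertex set, so the bound $|\mathcal C|\le\phi(\ell+1,k)$ is not directly available. I would handle it in two stages.
\begin{itemize}
\item First, split $\mathcal C$ into heavy centers (degree $\ge\varepsilon n^{r-\ell-1}$) and light ones. The light centers contribute $o(n^{r-\ell-1})$ in total, via a second application of the rank bound to the $F$ whose center is light.
\item Second, show that there are only boundedly many heavy centers. A sunflower of size $k$ among heavy centers, with any kernel $Z$ of size $j\le\ell$, should extend to a copy of $\mathcal L_{k,\ell}^r$ in $\mathcal F$. The plan is to use the large degrees to find $\ell-j$ common extra vertices in the links and then take disjoint petals. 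This extension is exactly where $r\ge 2\ell+3$ is needed: it leaves room to pick $\ell-j$ common vertices while still having $k$ disjoint remainders. Once it works, the heavy centers form a sunflower-free family of size at most $\phi(\ell+1,k)$, each of degree at most $\binom{n-\ell-1}{r-\ell-1}$, which gives the bound.
\end{itemize}
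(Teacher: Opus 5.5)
The paper does not prove this theorem; it only quotes it from Frankl--F\"uredi, so your outline has to be judged on its own. Your lower bound is correct. Step~2, however, has a genuine gap: the heavy/light split fails as stated.

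\textbf{Why Step 2 fails.} Let $\ell\ge1$. Take pairwise disjoint $(\ell+1)$-sets $C_1,\dots,C_t$ and disjoint blocks $V_1,\dots,V_t$ of size about $n/t$, and put $\mathcal F=\{C_i\cup X: X\in\binom{V_i}{r-\ell-1}\}$. Two members meet in either $0$ or at least $\ell+1$ vertices, so $\mathcal F$ is $\mathcal L^r_{k,\ell}$-free. Its centers are exactly the $C_i$, each of degree about $t^{-(r-\ell-1)}\binom{n}{r-\ell-1}$.
\begin{itemize}
\item With $t>\phi(\ell+1,k)$ fixed and $\varepsilon$ small, every $C_i$ is heavy and they are pairwise disjoint. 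So a sunflower of heavy centers with kernel size $0<\ell$ need not extend: the links sit on disjoint blocks, so there are no common extra vertices to find. Also, there are more than $\phi(\ell+1,k)$ heavy centers.
\item The light part cannot be handled by the rank bound either. These pieces have rank exactly $r-\ell-1$. Choosing $t$ just large enough to make every $C_i$ light still leaves $\Theta_\varepsilon(n^{r-\ell-1})$ members. Any control of light centers would need a degree-sum argument, like the pigeonhole over heads in Claim~\ref{CL: degree sum}.
\end{itemize}

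\textbf{The missing idea: count by $D=F\setminus C(F)$, not by the center.} Fix an $(r-\ell-1)$-set $D$. Suppose distinct centers $C_1,\dots,C_k$ with $C_i\sqcup D\in\mathcal F^*$ form a sunflower with kernel $Z$, $|Z|=j\le\ell$. Pick $W\subseteq D$ with $|W|=\ell-j$ and set $E_i=C_i\cup W$.
\begin{itemize}
\item Since $|E_i|\le2\ell+1<r$, we have $C_i\subseteq E_i\subsetneq C_i\sqcup D$, so each $E_i$ has kernel degree at least $s$.
\item The sets satisfy $E_i\cap E_{i'}=Z\cup W$, which has size $\ell$, so greedily chosen petals give a copy of $\mathcal L^r_{k,\ell}$.
\end{itemize}
Hence each $D$ carries at most $\phi(\ell+1,k)$ members, and $|\mathcal F^*|\le\phi(\ell+1,k)\binom{n}{r-\ell-1}$ with no heavy/light split at all. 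The shared $D$ supplies exactly the common vertices you were looking for, and this step needs only $r\ge2\ell+2$.

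\textbf{Issues in Step 1.} The hypothesis $r\ge2\ell+3$ is really used in the structural lemma of Step~1, which you only assert.
\begin{itemize}
\item The surviving pieces are those with $r(\mathcal J)\ge r-\ell-1$, i.e.\ every $(r-\ell-2)$-set is covered. They are not those in which every $(r-\ell-1)$-set is covered, as you wrote. Indeed, the interval $\{E:C\subseteq E\subsetneq[r]\}$ you want leaves $[r]\setminus C$ uncovered and has rank exactly $r-\ell-1$.
\item The lemma fails at $r=2\ell+2$. For $\ell=1$, $r=4$, the pattern $\{\emptyset,\{1,2\},\{3,4\}\}$ is intersection-closed, has no singleton and has rank $2$, but contains no such interval.
\item Theorem~\ref{THM: full trace} with $p=\ell$, $q=0$ yields the lemma only for $r\ge3\ell+2$. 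So for $\ell\ge2$ and $r<3\ell+2$ you still owe a proof.
\item Repeated extraction produces $O(\log n)$ pieces, not boundedly many. This is harmless, since the error remains $O(n^{r-\ell-2}\log n)$.
\end{itemize}
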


Before we prove Theorems~\ref{LEM: linear star} and~\ref{LEM: two linear star}, we introduce more lemmas which will be used in our proofs.
Let $\mathcal{F}\subseteq {[n]\choose r}$ be an $(r, s)$-homogeneous family with a corresponding $r$-partition
$(X_1,\ldots, X_r)$ and intersection pattern $\mathcal{J}$. If there is an $i\in [r]$, such that for any $F\in\mathcal{F}$, $\deg_{\mathcal{F}}(F\backslash {F\langle i\rangle})=1$ but for any $I\subsetneq [r]$ with $i\in I$, we have $\deg^*_{\mathcal{F}} ({F\langle I\rangle}) \ge s$, then we say that $\mathcal{F}$
is {\it homogeneously centralized} with threshold $s$.

\begin{lem}[Lemma 6.5 in~\cite{FJ15}]\label{LEM: centralized}
	Given $r\ge 4$ and a linear $r$-tree $T^{(r)}$  with $s$ vertices. Let $\mathcal{F}\subseteq {[n]\choose r}$ be an $(r,s)$-homogeneous family with a corresponding $r$-partition $(X_1, \ldots, X_r)$ and intersection pattern $\mathcal{J}$. If $T^{(r)}\nsubseteq\mathcal{F}$, then either $|\mathcal{F}|\le {n\choose r-2}$ or $\mathcal{F}$ is homogeneously centralized with threshold $s$.
\end{lem}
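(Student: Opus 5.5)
The plan is to argue on the intersection pattern $\mathcal J$ of $\mathcal F$. Two facts drive it: the rank bound handles low-rank patterns, and a $T^{(r)}$-free high-rank pattern has only one way to fail to be ``full''.

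\emph{Step 1 (low rank).} If $r(\mathcal J)\le r-2$, then Proposition~\ref{PROP: rank bound} gives $|\mathcal F|\le{n\choose r-2}$ directly. So assume $r(\mathcal J)\ge r-1$. This means every $(r-2)$-subset of $[r]$ is contained in some member of $\mathcal J$. Since $[r]\notin\mathcal J$, every $(r-2)$-set $[r]\setminus\{i,j\}$ either lies in $\mathcal J$ itself or lies in one of $[r]\setminus\{i\}$, $[r]\setminus\{j\}$, which is then in $\mathcal J$.

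\emph{Step 2 (locate the center).} Let
\[
 Z=\{i\in[r]:[r]\setminus\{i\}\notin\mathcal J\}.
\]
First consider $Z=\emptyset$. Then every $(r-1)$-subset of $[r]$ is in $\mathcal J$, and by closure under intersection (Lemma~\ref{LEM: homogeneous family}(3)) every proper subset of $[r]$ is in $\mathcal J$, including all singletons and the empty set. In that case I embed $T^{(r)}$ greedily along an ordering $E_1,E_2,\dots$ of its edges in which each new edge meets the previous ones in exactly one vertex $v$. If $v$ is currently the image of a vertex in $F\langle p\rangle$ for some already-used $F\in\mathcal F$, then $\{p\}\in\mathcal J$ gives $\deg^*_{\mathcal F}(F\langle p\rangle)\ge s$. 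This yields $s$ edges pairwise meeting only in that vertex. Fewer than $s$ vertices have been used so far, so one petal avoids them all, and we use it as the image of the new edge. This contradicts $T^{(r)}\nsubseteq\mathcal F$, so $Z\ne\emptyset$.

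\emph{Step 3 ($|Z|=1$ gives centralization).} Suppose $Z=\{i\}$. Then $[r]\setminus\{j\}\in\mathcal J$ for all $j\ne i$. Intersecting these sets yields every set $[r]\setminus S$ with $S\subseteq[r]\setminus\{i\}$, that is, every proper $I\subsetneq[r]$ with $i\in I$. For such $I$, homogeneity gives $\deg^*_{\mathcal F}(F\langle I\rangle)\ge s$. On the other hand $[r]\setminus\{i\}\notin\mathcal J$, so by Lemma~\ref{LEM: homogeneous family}(2) no $F'\ne F$ satisfies $F\cap F'=F\setminus F\langle i\rangle$. Hence $\deg_{\mathcal F}(F\setminus F\langle i\rangle)=1$, and $\mathcal F$ is homogeneously centralized with threshold $s$.

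\emph{Step 4 ($|Z|\ge2$, the main obstacle).} By Step 1, for distinct $i,j\in Z$ the set $[r]\setminus\{i,j\}$ lies in $\mathcal J$. For $l\notin Z$, the set $[r]\setminus\{l\}$ lies in $\mathcal J$. I will show that these members still allow an embedding of $T^{(r)}$. This contradicts $T^{(r)}$-freeness, so the only surviving case is $|Z|=1$.

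The difficulty is that we no longer have all singletons in $\mathcal J$. The idea is to use $r\ge4$ and the $2$-reducibility of $T^{(r)}$. Fix a part $p\notin Z$ if one exists, and otherwise any part $p\in Z$. Map every vertex of $T$ into parts indexed outside a chosen pair of $Z$. Each expanded edge has two private vertices, and these are sent to the two parts of that pair of $Z$. The other $r-2$ coordinates of an edge of $\mathcal F$ (those in $[r]\setminus\{i,j\}$) are then shared with the tree structure in a controlled way.

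Concretely, I would embed so that each new edge agrees with a previously used edge $F$ on at most one tree vertex, but may share the pattern $[r]\setminus\{i,j\}$ with a fresh edge chosen in a sunflower with kernel $F\langle I\rangle$, $I\in\mathcal J$. Then I shrink the kernel to the single attachment vertex using the closure of $\mathcal J$ under intersection. The part to check carefully is that a singleton pattern $\{p\}$, or an $I$ whose trace on the used vertices is exactly the attachment vertex, really belongs to $\mathcal J$ in this case. If it does not, then the rank hypothesis $r(\mathcal J)\ge r-1$ fails for a suitable $(r-2)$-set, and we return to Step 1.
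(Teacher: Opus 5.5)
Your Steps 1--3 are sound. The rank bound disposes of $r(\mathcal J)\le r-2$. The case $Z=\emptyset$ yields all singleton patterns and hence a greedy embedding. The case $Z=\{i\}$ is exactly the definition of homogeneously centralized. (The paper only quotes this lemma from F\"uredi--Jiang, so there is no in-paper proof to compare with.)

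The gap is Step 4, which you flag as the main obstacle but never prove; what you write there is a plan, not an argument. Sunflowers whose kernels have pattern $[r]\setminus\{i,j\}$ cannot be used to attach a new edge at a single vertex, and a kernel cannot be ``shrunk'' afterwards. The fallback ``if the singleton pattern is not in $\mathcal J$, then the rank hypothesis fails for a suitable $(r-2)$-set'' is asserted without justification. The singleton check is not optional. Distinct edges $F,F'$ always satisfy $\Pi(F\cap F')\in\mathcal J$, since their intersection is $F\langle\Pi(F\cap F')\rangle$ and Lemma~\ref{LEM: homogeneous family}(2) forbids patterns outside $\mathcal J$. So attaching a new edge at an embedded vertex in $X_l$ is possible only if $\{l\}\in\mathcal J$. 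The whole case therefore hinges on exhibiting enough singleton patterns, and that is exactly what you leave unchecked.

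The missing step is a short closure computation from the members you already identified. Fix distinct $i,j\in Z$. For $m\notin Z$ you have $[r]\setminus\{m\}\in\mathcal J$. For $m\in Z\setminus\{i\}$ the rank hypothesis gives $[r]\setminus\{i,m\}\in\mathcal J$. Hence, for every $l\notin\{i,j\}$, closure under intersection gives
\[
 \{l\}=\bigl([r]\setminus\{i,j\}\bigr)\cap
 \bigcap_{m\notin Z\cup\{l\}}\bigl([r]\setminus\{m\}\bigr)\cap
 \bigcap_{m\in Z\setminus\{i,j,l\}}\bigl([r]\setminus\{i,m\}\bigr)\in\mathcal J .
\]
Since $r\ge4$, there are at least two such parts, and that is all your Step 2 greedy embedding needs for an expanded tree.
\begin{itemize}
\item Place every original vertex of $T$ in parts outside $\{i,j\}$.
\item When a new expanded edge attaches at an embedded vertex $\phi(v)\in X_l$, take the sunflower of size $s$ with kernel $\{\phi(v)\}$ supplied by $\{l\}\in\mathcal J$.
\item Pick a petal avoiding the fewer than $s$ previously used vertices other than $\phi(v)$.
\item Send the new original vertex to the petal's vertex in some $X_{l'}$ with $l'\notin\{i,j,l\}$, and the $r-2$ private vertices (not ``two'') to the remaining parts.
\end{itemize}
This embeds $T^{(r)}$, contradicting freeness. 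It also shows that your Steps 2 and 4 merge into one statement: $|Z|\ne1$ forces at least two singleton patterns in $\mathcal J$.
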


\begin{lem}\label{LEM: partition}
	Let $k,r,s$ be positive integers with $k\ge 3$, $r\ge 4$ and $s\ge rk$. If $\mathcal{F}\subseteq {[n]\choose r}$ is $\mathcal{L}_k^r$-free, then $\mathcal{F}$ can be partitioned into subfamilies $\mathcal{G}_1,\ldots,\mathcal{G}_m, \mathcal{F}_0$ such that for any $i\in[m]$, $\mathcal{G}_i$ is $(r,s)$-homogeneous with an intersection pattern $\mathcal{J}_i$ of rank at least $r-2$, and $|\mathcal{F}_0|\le\frac{1}{c(r,s)}{	n\choose r-3}$.
\end{lem}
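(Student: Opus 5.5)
The plan is to iterate the homogeneous subfamily lemma (Lemma~\ref{LEM: homogeneous family}) and to discard homogeneous pieces of small rank, using the rank bound (Proposition~\ref{PROP: rank bound}) together with the fact that $\mathcal L_k^r$-freeness forbids certain intersection patterns.

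\textbf{Greedy extraction.} Set $\mathcal F^{(0)}=\mathcal F$. While $|\mathcal F^{(j)}|>\frac{1}{c(r,s)}\binom{n}{r-3}$, apply Lemma~\ref{LEM: homogeneous family} to $\mathcal F^{(j)}$. This gives an $(r,s)$-homogeneous $\mathcal G_{j+1}\subseteq\mathcal F^{(j)}$ with $|\mathcal G_{j+1}|\ge c(r,s)|\mathcal F^{(j)}|>\binom{n}{r-3}$ and some pattern $\mathcal J_{j+1}$. Put $\mathcal F^{(j+1)}=\mathcal F^{(j)}\setminus\mathcal G_{j+1}$. The process terminates, since each step removes at least one edge. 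When it stops, set $\mathcal F_0$ to be the remainder; then $|\mathcal F_0|\le\frac{1}{c(r,s)}\binom{n}{r-3}$ by the stopping rule. It remains to show that every extracted $\mathcal G_i$ has $r(\mathcal J_i)\ge r-2$.

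\textbf{The rank claim.} Let $\mathcal G$ be $(r,s)$-homogeneous, $\mathcal L_k^r$-free, with $|\mathcal G|>\binom n{r-3}$ and pattern $\mathcal J$.

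First, no singleton $\{i\}$ lies in $\mathcal J$. Suppose it did, and take $F\in\mathcal G$ with $x=F\langle i\rangle$. Then $\deg^*_{\mathcal G}(\{x\})\ge s\ge rk\ge k$, so there is a sunflower of $k$ edges with kernel $\{x\}$. This is a copy of $\mathcal L_k^r$, a contradiction.

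Next, by Proposition~\ref{PROP: rank bound}, if $r(\mathcal J)=p$ then $|\mathcal G|\le\binom np$. Hence $|\mathcal G|>\binom n{r-3}$ forces $p\ge r-2$ once $n$ is large enough that $\binom np\le\binom n{r-3}$ for all $p\le r-3$. For $n\ge 2r$ this holds because $r-3<n/2$. I would take $n$ large, or handle small $n$ trivially: the statement is only used for large $n$, and if $n<2r$ then $|\mathcal F|\le\binom nr$ is bounded and can be absorbed by choosing constants appropriately.

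The singleton observation is not needed for the rank bound itself. It is worth recording anyway, because it confirms that $\mathcal J$ contains no singletons, consistent with rank at least $r-2$.

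\textbf{Main obstacle.} The only delicate point is the monotonicity $\binom np\le\binom n{r-3}$ for $p<r-3$, which needs $n\ge 2(r-3)$. For smaller $n$, I would note that $\binom n{r-3}\ge1$ and simply place everything in $\mathcal F_0$ if needed; this may require replacing the bound by $\max\{\cdot,\binom nr\}$. Alternatively, I would state the lemma for $n\ge 2r$, which suffices for the applications.
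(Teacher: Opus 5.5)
Your proposal is correct and follows the paper's proof: greedy extraction via Lemma~\ref{LEM: homogeneous family} with stopping threshold $c(r,s)^{-1}\binom{n}{r-3}$, then the rank bound rules out $r(\mathcal J_j)\le r-3$. As you note, neither the singleton observation nor $\mathcal L_k^r$-freeness is needed. Your small-$n$ caveat also requires no change to the statement: if $n\le 2r-3$ then $\binom nr\le\binom n{r-3}$, so the extraction never starts, and if $n\ge 2r-2$ then $r-3\le n/2$, so $\binom np\le\binom n{r-3}$ for every $p\le r-3$.
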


\begin{proof}
	Set $\mathcal{R}_0=\mathcal{F}$ and apply
	Lemma~\ref{LEM: homogeneous family} repeatedly.  As long as
	$|\mathcal{R}_{j-1}|>c(r,s)^{-1}{n\choose r-3}$, choose an
	$(r,s)$-homogeneous subfamily
	$\mathcal{G}_j\subseteq\mathcal{R}_{j-1}$ with
	$|\mathcal{G}_j|\ge c(r,s)|\mathcal{R}_{j-1}|$, and put
	$\mathcal{R}_j=\mathcal{R}_{j-1}\setminus\mathcal{G}_j$.
	
	By the extraction rule,
	\[
	|\mathcal{G}_j|\ge c(r,s)|\mathcal{R}_{j-1}|
	>{n\choose r-3}.
	\]
	Let $p=r(\mathcal{J}_j)$.  If $p\le r-3$, then
	Proposition~\ref{PROP: rank bound} gives
	\[
	|\mathcal{G}_j|\le {n\choose p}\le {n\choose r-3},
	\]
	a contradiction.  Hence $r(\mathcal{J}_j)\ge r-2$.
	
	The process terminates after finitely many steps.  With
	$\mathcal{F}_0$ equal to the final remainder, we have
	$|\mathcal{F}_0|\le c(r,s)^{-1}{n\choose r-3}$, as required.
\end{proof}

Recall that $\mathcal F|S$ denotes the trace on $S$,
with repeated sets removed, as defined in the introduction.
Define the {\it center} of $\mathcal{F}$ by $C(\mathcal{F})=\{v\in V(\mathcal{F}): d_\mathcal{F}(v)\ge 2\}$, and the {\it kernel} of $\mathcal{F}$ by $K(\mathcal{F})=\cap_{F\in \mathcal{F}} F$. { { Setting $Y=C(\mathcal{F})$, we call  $\mathcal{C}(\mathcal{F})=\mathcal{F}|Y$ the {\it full trace} of $\mathcal{F}$.}
The following result was given by Frankl and F\"uredi~\cite{FF87}.

\begin{thm}[Lemma 7.3 in~\cite{FF87}]\label{THM: full trace}
	Let $\mathcal{A}$ be an $r$-graph, and set
	$p=|K(\mathcal{A})|$ and
	$q=|C(\mathcal{A})|-|K(\mathcal{A})|$. Let $\mathcal{F}$ be an
	$r$-partite $r$-graph with intersection pattern $\mathcal{J}$.
	For each $F\in\mathcal{F}$, define
	\[
		\mathcal{I}(F)=\{{F\langle J\rangle}:J\in\mathcal{J}\}\subseteq 2^F.
	\]
	If $r(\mathcal{J})\ge r-p-1$ and $r\ge 3p+q+2$, then, for every
	$F\in\mathcal{F}$, either $\mathcal{I}(F)$ contains a copy of the
	full trace of $\mathcal{A}$, or there is a
	$B\in {F\choose p+1}$ such that
	\[
		\{E:B\subseteq E\subsetneq F\}\subseteq\mathcal{I}(F).
	\]
\end{thm}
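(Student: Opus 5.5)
Fix $F\in\mathcal F$ and use the $r$-partition to identify $F$ with $[r]$. Then $\mathcal I(F)$ is identified with $\mathcal J$, and both alternatives become statements about the set system $\mathcal J\subseteq 2^{[r]}\setminus\{[r]\}$. In the intended application $\mathcal F$ is homogeneous, so by Lemma~\ref{LEM: homogeneous family}(3) $\mathcal J$ is closed under intersection; I assume this throughout. The rank hypothesis $r(\mathcal J)\ge r-p-1$ says that every $D\subseteq[r]$ with $|D|\le r-p-2$ lies in some member of $\mathcal J$. So for such $D$ the closure
\[
\operatorname{cl}(D)=\bigcap\{J\in\mathcal J: D\subseteq J\}
\]
is a well-defined member of $\mathcal J$, and $D\in\mathcal J$ exactly when $\operatorname{cl}(D)=D$. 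Every trace $A\cap C(\mathcal A)$ contains $K(\mathcal A)$ and lies inside $C(\mathcal A)$. It therefore suffices to find a $p$-set $P\subseteq[r]$ and a set $Y\supseteq P$ with $|Y|=p+q$ such that every $T$ with $P\subseteq T\subseteq Y$ is closed. The injection $C(\mathcal A)\to Y$ sending $K(\mathcal A)$ onto $P$ then carries the full trace into $\mathcal I(F)$. Note that $|Y|=p+q\le r-2p-2\le r-p-2$, so all closures involved are defined.

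The plan is to construct $P$ and $Y$ greedily and, whenever the greedy step gets stuck, to extract the $(p+1)$-set $B$ of the second alternative.

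\emph{Step 1: choosing $P$.} Let $P$ be a $p$-set that is closed together with all its small supersets that matter. Concretely, I would consider the sets $P\cup\{x\}$ and ask when $\operatorname{cl}(P\cup\{x\})\ne P\cup\{x\}$. If for some $(p+1)$-set $B$ every proper superset $E$ of $B$ is closed, the second alternative holds and we stop. Otherwise every $(p+1)$-set $B$ has a non-closed proper superset, and this is the information the construction must exploit.

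\emph{Step 2: adding vertices.} Suppose $Y_j\supseteq P$ has been built with all intermediate sets closed, and we want to add $y\notin Y_j$. For each of the at most $2^{j}$ sets $T$ with $P\subseteq T\subseteq Y_j$, the only way $T\cup\{y\}$ can fail to be closed is that $y$ forces extra elements into $\operatorname{cl}(T\cup\{y\})$. Closedness under intersection gives
\[
\operatorname{cl}(T\cup\{y\})=\operatorname{cl}(T\cup\{y\})\cap\operatorname{cl}(Y_j\cup\{y\}),
\]
so the failure is controlled by the single closure $\operatorname{cl}(Y_j\cup\{y\})$. I would choose $y$ in a maximal member $M\in\mathcal J$ containing $Y_j$, or outside it, and compare closures. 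The slack $r-(p+q)\ge 2p+2$ should provide enough room for this choice.

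The main obstacle is exactly this extension step: showing that if no admissible $y$ exists at some stage, then closures of the form $\operatorname{cl}(T\cup\{y\})$ must all contain a common $(p+1)$-set $B$, and that $B$ has all its proper supersets in $\mathcal J$. This is where the numerical condition $r\ge 3p+q+2$ must enter. My first attempt would be a counting and pigeonhole argument. Each obstruction is witnessed by a non-closed set of size at most $r-p-2$ whose closure adds at least one element. The rank condition forces these closures to be proper, so they miss at least one point of $[r]$. Playing the $p+1$ "extra" coordinates against the $2p+2$ spare coordinates should then force the sunflower-like structure $\{E: B\subseteq E\subsetneq[r]\}\subseteq\mathcal J$, which is the second alternative.
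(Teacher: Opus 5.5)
The paper gives no proof of this lemma; it quotes it from Frankl--F\"uredi. So your proposal has to stand on its own, and as written it does not prove the statement. Your reduction is sound. It is even lossless: if $\mathcal A$ is chosen so that its full trace is the whole interval $\{K(\mathcal A)\cup T:T\subseteq C(\mathcal A)\setminus K(\mathcal A)\}$, then the first alternative is exactly the existence of $P\subseteq Y$ with $[P,Y]\subseteq\mathcal J$.

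Everything after that reduction is a plan, and the one concrete mechanism in Step~2 is wrong. The displayed identity is only monotonicity, $\operatorname{cl}(T\cup\{y\})\subseteq\operatorname{cl}(Y_j\cup\{y\})$. It does not let one closure control the $2^j$ conditions. Knowing $Y_j\cup\{y\}\in\mathcal J$ says nothing about $(Y_j\setminus\{t\})\cup\{y\}$ for $t\in Y_j\setminus P$. For instance, take the intersection-closed chain $\{1\}\subset\{1,2\}\subset\{1,2,3\}$ with $P=\{1\}$, $Y_j=\{1,2\}$, $y=3$: the set $\{1,3\}$ is missing.

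What intersection-closure actually gives is two equivalences:
\begin{itemize}
\item $[P,Y]\subseteq\mathcal J$ iff $Y\in\mathcal J$ and $Y\setminus\{y\}\in\mathcal J$ for every $y\in Y\setminus P$;
\item the second alternative holds iff at least $r-p-1$ of the co-atoms $[r]\setminus\{x\}$ lie in $\mathcal J$.
\end{itemize}
Neither Step~1 nor your closing ``pigeonhole'' paragraph turns the failure of one condition into the other. The phrase ``should then force'' is precisely the content of the lemma.

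More seriously, the hypothesis $r\ge 3p+q+2$ is not really used. You mention the slack $r-(p+q)\ge 2p+2$, but the only inequality your argument invokes is $p+q\le r-p-2$, i.e.\ $r\ge 2p+q+2$. At the level of set systems where your argument lives, that is not enough. Take $r=5$, $p=q=1$, $Q_1=\{1,2\}$, $Q_2=\{3,4\}$, and let $\mathcal J$ be the proper subsets of $[5]$ meeting each $Q_i$ in $\emptyset$ or $Q_i$.
\begin{itemize}
\item $\mathcal J$ is intersection-closed, $r(\mathcal J)=3=r-p-1$, and $p+q\le r-p-2$ holds.
\item The only co-atom in $\mathcal J$ is $\{1,2,3,4\}$, so no $B$ exists.
\item The only singleton in $\mathcal J$ is $\{5\}$, and no pair containing $5$ is in $\mathcal J$. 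Hence the full trace $\{\{v\},\{v,c\}\}$ has no copy. This trace comes from an $\mathcal A$ with two edges through $v$ avoiding $c$ and two edges through $\{v,c\}$, all other vertices private.
\end{itemize}
So any proof must use $r\ge 3p+q+2$ essentially.

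Here is one place it genuinely enters. Call a $(p+1)$-set $R$ \emph{free} if $[r]\setminus R$ lies in no member of $\mathcal J$.
\begin{itemize}
\item If $Y\in\mathcal J$ avoids a free $R$, then $Y\setminus\{y\}=Y\cap\operatorname{cl}(([r]\setminus R)\setminus\{y\})\in\mathcal J$ for every $y\in Y$, so $2^Y\subseteq\mathcal J$.
\item If $R$ and $A$ are disjoint free sets, then $\bigcap_{x\in R}\operatorname{cl}([r]\setminus(A\cup\{x\}))$ is such a $Y$. It contains $[r]\setminus(A\cup R)$, which has size $r-2p-2\ge p+q$.
\item If $p\ge1$ and all free sets share a point $x$ (or there are none), the link $\{J\setminus\{x\}:x\in J\in\mathcal J\}$ satisfies the hypotheses with $(r-1,p-1,q)$, so one can induct on $p$.
\item Pairwise intersecting free sets with no common point still need a further argument.
\end{itemize}
None of this case analysis appears in your proposal, so the statement remains unproved.
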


\begin{proof}[Proof of Theorems~\ref{LEM: linear star} and \ref{LEM: two linear star}]
Throughout this proof, $k$ is odd.  Fix $\mathcal F\subseteq{[n]\choose r}$ satisfying
either 
\begin{enumerate}
\item[(I)] $\mathcal F$ is $\mathcal L_k^r$-free and
$|\mathcal F|\ge|\mathcal L_2(n,r,k,1)|$; or
\item[(II)] $\mathcal F$ is
$\{\mathcal L_k^r,2\mathcal L_{k-1}^r\}$-free and
$|\mathcal F|\ge|\mathcal L_1(n,r,k,1)|$;
\end{enumerate}
and write $\mathcal L=\mathcal L_2(n,r,k,1)$ in
case (I) and $\mathcal L=\mathcal L_1(n,r,k,1)$ in case (II).  In
particular, $\mathcal F$ is $\mathcal L_k^r$-free in both cases and
$|\mathcal F|\ge|\mathcal L|$.

	Let $s\ge 2rk$. By Lemma~\ref{LEM: partition}, $\mathcal{F}$ can be partitioned into subfamilies $\mathcal{G}_1,\ldots,\mathcal{G}_m$, $\mathcal{F}_0$ such that for any $i\in[m]$, $\mathcal{G}_i$ is $(r,s)$-homogeneous with an intersection pattern $\mathcal{J}_i$ of rank at least $r-2$, and $|\mathcal{F}_0|\le\frac{1}{c(r,s)}{	n\choose r-3}$.
	For $F\in\mathcal{G}_i$, let
	$\mathcal{I}_i(F)=\{{F\langle J\rangle}:J\in\mathcal{J}_i\}$.
	Applying Theorem~\ref{THM: full trace} to $\mathcal{L}_k^r$ and each $\mathcal{G}_i$, note that $p=|K(\mathcal{L}_k^r)|=1$ and $q=|C(\mathcal{L}_k^r)|-|K(\mathcal{L}_k^r)|=1-1=0$ and $r\ge 5=3p+q+2$, 
	for every $F\in\mathcal{G}_i$, either
	$\mathcal{I}_i(F)$ contains the full trace
	$\mathcal{C}(\mathcal{L}_k^r)$, or
	$\{E:B\subseteq E\subsetneq F\}\subseteq\mathcal{I}_i(F)$ for
	some $B\in {F\choose2}$. The former would put a
	singleton of kernel degree at least $s$ in $\mathcal{G}_i$, thereby
	producing a copy of $\mathcal{L}_s^r$, a contradiction. Hence the
	latter holds for every $F$. Let {$\mathscr B(F)$} be the set of
	all pairs $B\in{F\choose2}$ with this property.
	
	\begin{clm}\label{CL: B(F)}
		For any $ F\in\mathcal{G}_i$ with $i\in[m]$, {$|\mathscr B(F)|=1$}.
	\end{clm}
  \begin{proof}
		Suppose to the contrary that there is an $F\in\mathcal{G}_i$ such that $\{\{v_1, v_2\}, \{v_3, v_4\}\}\subseteq {\mathscr B(F)}$.
		Then $\{v_1,v_2\}\cup\{v_3, v_4\}\subseteq F$. We may assume $v_4\notin \{v_1, v_2\}$. 
		Clearly, $\{v_1, v_2, v_3\}\in\mathcal{I}_i(F)$.
		Since $\{v_3,v_4\}\in\mathcal{I}_i(F)$ and
		$\mathcal{I}_i(F)$ is closed under intersection, we have
		$\{v_3\}\in\mathcal{I}_i(F)$, which leads to a copy of
		$\mathcal{L}_s^r$ with kernel $v_3$ in $\mathcal{G}_i$, a
		contradiction.
   \end{proof}
	
	Let $\mathcal{F}^*=\mathcal{G}_1\cup\ldots\cup\mathcal{G}_m$.
{For $F\in\mathcal F^*$, denote by $B(F)\in\binom F2$
the unique member of $\mathscr B(F)$.}
Define $\mathcal{G}'$ as the 3-graph  with vertex set $[n]$ and edge set being the maximum set of $\{e\in { [n]\choose   3} : d_{\mathcal{F}}^*(e)\ge s\}$.
	\begin{clm}\label{CL: expansion}
		$\mathcal{G'}$	is $\mathcal{L}_k^3$-free. Moreover, if $\mathcal{F}$ is $2\mathcal{L}_{k-1}^r$-free, then $\mathcal{G'}$ is $2\mathcal{L}_{k-1}^3$-free.
	\end{clm}
\begin{proof}
	Suppose to the contrary that there is an $\mathcal{L}_k^3\subseteq\mathcal{G'}$. We show that $\mathcal{F}$ contains a copy of $\mathcal{L}_k^r$. Thus we have a contradiction.
	Let $\mathcal{L}_k^3=\{D_1, \ldots, D_k\}\subseteq\mathcal{G'}$. Then $d_{\mathcal{F}}^*(D_i)\ge s$ for $1\le i\le k$.
	We show that $\mathcal{L}_k^3$ in $\mathcal{G}'$ can be expanded to a copy of $\mathcal{L}_k^r$ in $\mathcal{F}$ by induction on $k$.
	For $k=1$, since $d_{\mathcal{F}}^*(D_1)\ge s\ge1$, there is a
	hyperedge $F_1\in\mathcal{F}$ containing $D_1$, which is the
	required copy of $\mathcal{L}_1^r$.
	Now suppose $k\ge 2$ and any $\mathcal{L}_{k-1}^3\subseteq \mathcal{G}'$ can be expanded to an $\mathcal{L}_{k-1}^r$ in $\mathcal{F}$. 
Let $x$ be the kernel of $\{D_1,\ldots,D_k\}$ and put
\[
V_0=\bigcup_{i=1}^k D_i.
\]
We choose $F_1,\ldots,F_k$ inductively so that
\[
D_i\subseteq F_i,\qquad F_i\cap V_0=D_i,
\]
and the sets $F_i\setminus D_i$ are pairwise disjoint.

Suppose that $F_1,\ldots,F_{t-1}$ have already been chosen.
Since $\deg_{\mathcal F}^*(D_t)\ge s$, there is a
$\Delta$-system $\{F'_1,\ldots,F'_s\}$ with kernel $D_t$.
Its petals $F'_j\setminus D_t$ are pairwise disjoint.
Let
\[
W_t=(V_0\setminus D_t)\cup
\bigcup_{i=1}^{t-1}(F_i\setminus D_i).
\]
Since $|V_0|=2k+1$, we have
\[
|W_t|
\le 2k-2+(t-1)(r-3)
\le (k-1)(r-1)<kr\le s.
\]
As the petals are pairwise disjoint, at most $|W_t|$ of them
intersect $W_t$. Hence some $F'_j\setminus D_t$ is disjoint
from $W_t$. Set $F_t=F'_j$.
Then $F_t\cap V_0=D_t$, and for every $i<t$,
\[
F_t\cap F_i=D_t\cap D_i=\{x\}.
\]
Thus $\{F_1,\ldots,F_k\}$ is a copy of
$\mathcal L_k^r$ in $\mathcal F$.
	
	Now suppose that
	$2\mathcal L_{k-1}^3\subseteq\mathcal G'$.  List its
	$2(k-1)$ triples as $D_1,\ldots,D_{2(k-1)}$, keeping the two
	components consecutive, and put
	$V_0=\bigcup_tD_t$.  Thus $|V_0|=2(2k-1)=4k-2$.
	Choose $F_t\supseteq D_t$ successively.  At step $t$, a petal in
	a sunflower of size $s$ with kernel $D_t$ must avoid
	\[
	 (V_0\setminus D_t)\cup
	 \bigcup_{j<t}(F_j\setminus D_j),
	\]
	whose size is at most
	\[
	 4k-5+(2k-3)(r-3)<2kr\le s.
	\]
	Hence an avoiding petal always exists.  Within each component the
	chosen $r$-edges meet exactly in its original center, and edges
	belonging to different components are disjoint.  They therefore
	form a copy of $2\mathcal L_{k-1}^r$ in $\mathcal F$, as required.
	This completes the proof of the claim.
 \end{proof}

\medskip
Let $D(F)=F\backslash B(F)$ for $F\in \mathcal{F}^*$.
Let $\mathcal{L}'\subseteq {[n]\choose 3}$ with edge set $$\mathcal{B}=\{ B(F)\cup\{y\} : F\in\mathcal{F}^* \text{ and } y\in D(F)\}.$$
Note that $d_{\mathcal{F}}^*(B(F))\ge s$ and $d_{\mathcal{F}}^*(B(F)\cup\{y\})\ge s$ for any $F\in\mathcal{F}^*$ and $y\in D(F)$. 
Therefore, $\mathcal{L}'\subseteq\mathcal{G}'$.
By Claim~\ref{CL: expansion}, $\mathcal{L}'$ is $\mathcal{L}_k^3$-free or $\{\mathcal{L}_k^3, 2\mathcal{L}_{k-1}^3\}$-free (if $\mathcal{F}$ is $\{\mathcal{L}_k^r, 2\mathcal{L}_{k-1}^r\}$-free). By Theorem~\ref{THM: even k}, $|\mathcal{L}'|\le f(n,3,1,k)$.
Define  $\mathcal{L}''$ to be a directed bipartite graph with vertex partition $\{B(F) : F\in \mathcal{F}^*\}$ and $[n]$ whose arc set is
 $$\mathcal{B}''=\{(B(F),y) : F\in\mathcal{F}^* \text{ and } y\in D(F)\}.$$
Note that one edge $B(F)\cup\{y\}\in\mathcal{B}$, say $B(F)=\{a,b\}$, possibly induces three arcs, say $(\{a,b\},y)$, $(\{a,y\},b)$ and $(\{b, y\},a)$, if there are some $F', F''\in\mathcal{F}^*$ with $B(F')=\{a,y\}, b\in D(F')$ and $B(F'')=\{b,y\}, a\in D(F'')$. 
Theorem~\ref{THM: even k} gives 
$|\mathcal L''|\le3|\mathcal L'|\le3f(n,3,1,k)\le cn$ for some constant $c=c(k)$.

\begin{clm}\label{CL: kernel graph}
	$|\mathcal{F}^*|\le\frac{|\mathcal{L'}|}{r-2}{n-3\choose r-3}$.
\end{clm}
  \begin{proof}
	Let $$\mathcal{M}=\{(B(F)\cup\{y\},F) : F\in\mathcal{F}^* \text{ and } y\in D(F)\}.$$
	For fixed $F\in\mathcal{F}^*$, there are $r-2$ choices for $y\in D(F)$ with $(B(F)\cup\{y\}, F)\in \mathcal{M}$.
	For fixed $B(F)\cup\{y\}$, there are at most ${n-3\choose r-3}$ hyperedges $F\in\mathcal{F}^*$ containing $B(F)\cup\{y\}$.
	Therefore,
	$$|\mathcal{F}^*|(r-2)=|\mathcal{M}|\le |\mathcal{L}'|{n-3\choose r-3}. $$
	The claim holds.
  \end{proof}
\medskip

For a 2-set   $B\in {[n]\choose 2}$, let $N^+(B)=\{y\in [n] : (B,y)\in \mathcal{L}''\}$ and let $d^+(B)=|N^+(B)|$ be the out-degree of $B$ in $\mathcal{L}''$.
Let $\mathcal{A}=\{B\in {[n]\choose 2} : {d^+(B)}\le x\}$ and ${\mathcal C}=\{B\in {[n]\choose 2} : d^+(B)> x\}$, where $x=n^{1-\frac{7}{3(r-2)}}$.
Let  $\mathcal{F}_{\mathcal{A}}=\{F\in\mathcal{F}^* : B(F)\in \mathcal{A}\}$. Then $|\mathcal{F}_{\mathcal{A}}|\le|\mathcal{A}|{  x\choose r-2}\le n^{r-\frac{7}{3}}$.
{Let $V{\mathcal C}=\bigcup_{B\in{\mathcal C}}B$.
The subgraph $\mathcal L''[{\mathcal C},V{\mathcal C}]$ induced by
${\mathcal C}\cup V{\mathcal C}$ has arc set
\[
 \mathcal A_{{\mathcal C}}
 =\{(B,y)\in\mathcal B'':B\in{\mathcal C},\ y\in V{\mathcal C}\}.
\]
Define its associated $3$-graph by
\[
 \mathcal Q_{{\mathcal C}}
 =\{B\cup\{y\}:(B,y)\in\mathcal A_{{\mathcal C}}\}
 \subseteq\mathcal L'[V{\mathcal C}].
\]}
Let
$$\mathcal{F}_{{\mathcal{C}}}=\{F\in\mathcal{F}^* : B\cup\{y\}\subseteq F \text{ for some ${(B,y)\in\mathcal A_{{\mathcal C}}}$}\}.$$
{Note that
$|V{\mathcal C}|\le2|{\mathcal C}|\le2|\mathcal L''|/x\le2cn/x$.
The $3$-graph $\mathcal Q_{{\mathcal C}}$ is $\mathcal L_k^3$-free,
and each of its triples represents at most three arcs. Thus
\[
 |\mathcal A_{{\mathcal C}}|\le3|\mathcal Q_{{\mathcal C}}|
 \le c|V{\mathcal C}|\le2c^2n^{\frac{7}{3(r-2)}}.
\]
Consequently, for $r\ge5$,
\[
 |\mathcal F_{{\mathcal C}}|
 \le|\mathcal A_{{\mathcal C}}|{n-3\choose r-3}
 \le2c^2n^{r-3+\frac{7}{3(r-2)}}
 =O(n^{r-\frac{20}{9}}).
\]}
	Let $\ell=\phi(2,k)$ and $\ell'=g_1(k)$. Directly from the
	finite-kernel constructions,
	\[
		|\mathcal{L}|=
		\begin{cases}
		\ell{n-2\choose r-2}+O(n^{r-3}),
			&\text{in the $\mathcal{L}_k^r$-free case},\\
		\ell'{n-2\choose r-2}+O(n^{r-3}),
			&\text{in the $\{\mathcal{L}_k^r,
			2\mathcal{L}_{k-1}^r\}$-free case}.
		\end{cases}
	\]
Let $\tilde{\mathcal{F}}=\mathcal{F}^*\setminus(\mathcal{F}_\mathcal{A}\cup \mathcal{F}_{\mathcal{C}})$. 
	Then
	\[
		\tilde{\mathcal{F}}
		=\{F\in\mathcal{F}^*:  B(F)\in {\mathcal{C}} \text{ and }  F\cap V{\mathcal{C}}=B(F)\}.
	\]
	Indeed, membership outside $\mathcal{F}_{\mathcal A}$ forces
	$B(F)\in{\mathcal C}$, while membership outside
	$\mathcal{F}_{{\mathcal C}}$ forces
	$(F\setminus B(F))\cap V{\mathcal C}=\emptyset$.
Therefore, 
$$|\tilde{\mathcal{F}}|=|\mathcal{F}|-|\mathcal{F}_0|-|\mathcal{F}_{\mathcal{A}}\cup \mathcal{F}_{{\mathcal{C}}}|\ge\ell{n-2\choose r-2}-O(n^{r-3})-O(n^{r-\frac 73})-O(n^{r-\frac{20}{9}})=\ell{n-2\choose r-2}-O(n^{r-\frac{20}{9}})$$
if $\mathcal{F}$ is $\mathcal{L}_k^r$-free,
 and $$|\tilde{\mathcal{F}}|=|\mathcal{F}|-|\mathcal{F}_0|-|\mathcal{F}_{\mathcal{A}}\cup\mathcal{F}_{{\mathcal{C}}}|\ge\ell'{n-2\choose r-2}-O(n^{r-3})-O(n^{r-\frac 73})-O(n^{r-\frac{20}{9}})=\ell'{n-2\choose r-2}-O(n^{r-\frac{20}{9}})$$
if $\mathcal{F}$ is $\{\mathcal{L}_k^r, 2\mathcal{L}_{k-1}^r\}$-free.
Let $\tilde{\mathcal{L}}\subseteq\mathcal{L}''$ with the edge set $\{(B(F),y) : F\in\tilde{\mathcal{F}} \text{ and } y\in D(F)\}$.
Denote by ${\mathcal{C}}=\{B_1, \ldots, B_p\}$ and $d_i=d_{\tilde{\mathcal{L}}}^+(B_i)$. Then $|\tilde{\mathcal{F}}|\le \sum_{i=1}^p{d_i\choose 	r-2}$. 
For $0\le d\le n$,
\[
{d\choose r-2}\le
\frac{d}{n}{n\choose r-2}.
\]
It follows that
\[
\sum_{i=1}^p d_i
\ge
\frac{n|\widetilde{\mathcal F}|}{{n\choose r-2}}.
\]
Therefore,
\[
\sum_{i=1}^p d_i
\ge \ell n-O(n^{7/9})
>(\ell-1)n
\]
in the $\mathcal L_k^r$-free case, for sufficiently large $n$.
Similarly,
\[
\sum_{i=1}^p d_i
\ge \ell'n-O(n^{7/9})
>(\ell'-1)n
\]
in the
$\{\mathcal L_k^r,2\mathcal L_{k-1}^r\}$-free case.

Assume $d_1\ge\ldots\ge d_p$.
\begin{clm}\label{CL: degree sum}
	$\sum_{i=1}^p d_i\le \ell n$  if $\mathcal{F}$ is $\mathcal{L}_k^r$-free and $\sum_{i=1}^p d_i\le \ell' n$  if $\mathcal{F}$ is $\{\mathcal{L}_k^r, 2\mathcal{L}_{k-1}^r\}$-free.
\end{clm}
    \begin{proof}
	Suppose $\mathcal{F}$ is $\mathcal{L}_k^r$-free and assume
	$\sum_{i=1}^p d_i>\ell n$. For every
	$(B(F),y)\in\tilde{\mathcal L}$ we have
	$y\notin V{\mathcal C}$. By the pigeonhole principle, some
	$v\in[n]\setminus V{\mathcal C}$ is the head of arcs from
	$\ell+1$ distinct pairs $B_{i_1},\ldots,B_{i_{\ell+1}}$.
	Note that $\ell=\phi(2,k)$. By Theorem~\ref{LEM: AHS}, the
	2-graph with edge set
	$\{B_{i_1},\ldots,B_{i_{\ell+1}}\}$ contains a $K_{1,k}$ or
	{a copy of $kK_2$}. For the former, $d^+(B_{i_j})>x$ permits a greedy
	extension to $\mathcal{L}_k^3$; for the latter,
	{if $M$ is the resulting matching of size $k$, then
	$\{e\cup\{v\}:e\in M\}$} is already a copy of $\mathcal{L}_k^3$. Both
	contradict Claim~\ref{CL: expansion}.
	
	Now suppose $\mathcal{F}$ is
	$\{\mathcal{L}_k^r,2\mathcal{L}_{k-1}^r\}$-free and assume
	$\sum_{i=1}^p d_i>\ell'n$. The same pigeonhole argument produces
	a vertex $v\in[n]\setminus V{\mathcal C}$ and $\ell'+1$ pairs.
	Theorem~\ref{LEM: 2K_{1,k-1}} produces one of
	$K_{1,k}$, {$kK_2$}, and $2K_{1,k-1}$. The first two give
	$\mathcal L_k^3$ as above, while the last greedily extends to
	$2\mathcal L_{k-1}^3$, again contradicting
	Claim~\ref{CL: expansion}.
   \end{proof}
\medskip

\begin{clm}\label{CL: degree bound}
	$d_1,\ldots,d_\ell\ge n-O(n^{\frac{7}{9}})$ if $\mathcal{F}$ is $\mathcal{L}_k^r$-free and $d_1,\ldots,d_{\ell'}\ge n-O(n^{\frac{7}{9}})$ if $\mathcal{F}$ is $\{\mathcal{L}_k^r, 2\mathcal{L}_{k-1}^r\}$-free.
\end{clm}
\begin{proof}
If $\mathcal{F}$ is $\mathcal{L}_k^r$-free, let $b=\sum_{i=1}^p d_i$ and $y=d_\ell$. Note that $n\ge d_1\ge \ldots\ge d_\ell=y\ge \ldots\ge d_p\ge 0$ and $(\ell-1)n<\sum_{i=1}^pd_i\le \ell n$. By the convexity of $f(x)={x\choose r-2}$, we have $$\sum_{i=1}^pf(d_i)\le (\ell-1)f(n)+qf(y)+f(y')$$ for some $0\le q\le p-\ell+1$ and $0\le y'<y$. So
$$\begin{aligned}
	\ell{ n\choose r-2}-O(n^{r-\frac{20}{9}})\le |\tilde{\mathcal{F}}|  &\le \sum_{i=1}^p{d_i\choose  r-2}\\
	&\le(\ell-1){n\choose r-2}+q{  y\choose r-2}+{  y'\choose r-2}\\
	&\le(\ell-1){n\choose r-2}+\frac{n}{y}{  y\choose r-2},
\end{aligned}$$
which implies that 
$y\ge n-O(n^{\frac{7}{9}})$.

With a similar argument, we can obtain that $d_1,\ldots,d_{\ell'}\ge n-O(n^{\frac{7}{9}})$ for the case that $\mathcal{F}$ is $\{\mathcal{L}_k^r, 2\mathcal{L}_{k-1}^r\}$-free.
 \end{proof}

\medskip

In case (I), let $G$ be the graph with edge set
$\{B_1,\ldots,B_\ell\}$; in case (II), let $G'$ be the graph with
edge set $\{B_1,\ldots,B_{\ell'}\}$.  
By Claim~\ref{CL: degree bound}, every selected pair $B_i$ has
\[
 |N^+(B_i)|=n-O(n^{7/9}).
\]
If the selected pairs contain a $K_{1,k}$, choose for its $k$ edges
distinct heads from their respective out-neighborhoods, avoiding all
vertices already used.  The resulting triples form an
$\mathcal L_k^3$.  If they contain a matching
$B_{i_1},\ldots,B_{i_k}$, then
\[
 \left|\bigcap_{j=1}^kN^+(B_{i_j})\right|
 \ge n-\sum_{j=1}^k\bigl(n-|N^+(B_{i_j})|\bigr)
 =n-O(n^{7/9})>2k
\]
for sufficiently large $n$.  Choosing a common head outside the
matching gives another $\mathcal L_k^3$.  Both conclusions contradict
Claim~\ref{CL: expansion}.  Hence $G$ is
$\{K_{1,k},kK_2\}$-free.

The same two arguments apply to $G'$.  If, in addition, $G'$ contains
two vertex-disjoint copies of $K_{1,k-1}$, choose distinct heads for
their $2(k-1)$ edges, each from the corresponding out-neighborhood
and outside all previously used vertices.  This gives two
vertex-disjoint copies of $\mathcal L_{k-1}^3$, again contradicting
Claim~\ref{CL: expansion}.  Therefore $G'$ is
$\{K_{1,k},kK_2,2K_{1,k-1}\}$-free.

Since $e(G)=\ell$ and $e(G')=\ell'$, the odd equality cases of
Theorems~\ref{LEM: AHS} and \ref{LEM: 2K_{1,k-1}} now give
\[
 G\cong2K_k
 \qquad\text{and}\qquad
 G'\in R(2k-1,k-1),
\]
respectively, after isolated vertices are discarded.

We next record the lower-order comparison that distinguishes the
members of $R(2k-1,k-1)$.  Given a graph $H$, put
\[
 \tau_i(H)=\bigl|\{X\in{V(H)\choose3}:e(H[X])=i\}\bigr|,
 \qquad i=1,2,3,
\]
and, for $V(H)\subseteq[n]$, define
\[
 \mathcal F_1(H;n,r)=
 \left\{F\in{[n]\choose r}:
 F\cap V(H)\ne\emptyset\ \text{and}\
 \delta\bigl(H[F\cap V(H)]\bigr)\ge1\right\}.
\]

\begin{thm}[Theorems 5.1 and 5.2 in~\cite{CF87}]
\label{THM: number of triangles}
Let $H\in R(2k-1,k-1)$.
If $k$ is odd, then
\[
 \tau_3(H)\ge
 \frac{k-1}{2}\left(\frac{k-1}{2}-1\right),
\]
with equality if and only if $H\cong G_1$.
\end{thm}

For a fixed graph $H$ on $h$ vertices, separating the members of
$\mathcal F_1(H;n,r)$ according to their intersection with $V(H)$
gives
\begin{equation}\label{EQ: graph-model-expansion}
\begin{aligned}
 |\mathcal F_1(H;n,r)|
  ={}& e(H){n-h\choose r-2}
   +\bigl(\tau_2(H)+\tau_3(H)\bigr)
      {n-h\choose r-3}
   +O(n^{r-4}).
\end{aligned}
\end{equation}
Indeed, a two-vertex trace contributes precisely when it is an edge
of $H$, a three-vertex trace contributes precisely when it spans two
or three edges, and all traces of size at least four contribute only
$O(n^{r-4})$ members.  Moreover, double-counting adjacent pairs of
edges gives
\[
 \tau_2(H)+3\tau_3(H)
   =\sum_{v\in V(H)}{d_H(v)\choose2},
\]
and hence
\begin{equation}\label{EQ: three-trace-count}
 \tau_2(H)+\tau_3(H)
 =\sum_{v\in V(H)}{d_H(v)\choose2}-2\tau_3(H).
\end{equation}
Consequently, if $H,H_0\in R(2k-1,k-1)$ and
$\tau_3(H_0)$ is minimum, then
\[
 |\mathcal F_1(H_0;n,r)|-|\mathcal F_1(H;n,r)|
 =2\bigl(\tau_3(H)-\tau_3(H_0)\bigr)
   {n-2k+1\choose r-3}+O(n^{r-4}).
\]
Thus, for all sufficiently large $n$, the size of 
$\mathcal F_1(H;n,r)$ is maximized over
$H\in R(2k-1,k-1)$ precisely by $H\cong G_1$.

{
We now complete cases (I) and (II) simultaneously.  Put
\[
 (t,H)=
 \begin{cases}
  (\ell,G),&\text{in case (I)},\\
  (\ell',G'),&\text{in case (II)},
 \end{cases}
 \qquad S=V(H),
\]
where isolated vertices are omitted, and write
\[
 \mathcal M_H=
 \left\{F\in{[n]\choose r}:F\cap S\ne\emptyset\ \text{and}\
 \delta\bigl(H[F\cap S]\bigr)\ge1\right\}.
\]
Thus $H\cong2K_k$ and $\mathcal M_H=\mathcal L_2(n,r,k,1)$ in
case (I), whereas $H\in R(2k-1,k-1)$ and
$\mathcal M_H=\mathcal F_1(H;n,r)$ in case (II).  In either case,
\begin{equation}\label{EQ: unified core properties}
 |S|=2k-1\ \text{or}\ 2k,\qquad
 d_H(x)=k-1\quad(x\in S),\qquad \nu(H)=k-1.
\end{equation}
For $H\cong2K_k$ these assertions are immediate.  In case (II), 
Vizing's theorem
partitions $E(H)$ into at most $\Delta(H)+1=k$ matchings, one of which
has size at least
\[
 \left\lceil\frac{e(H)}k\right\rceil
 =\left\lceil\frac{(2k-1)(k-1)}{2k}\right\rceil=k-1.
\]

For $i\in[t]$, define
\[
 P_i=\left\{y\in[n]\setminus S:
 d_{\mathcal F}^*(B_i\cup\{y\})\ge s\right\},
 \qquad
 M=\bigcap_{i=1}^{t}P_i,
 \qquad
 Z=[n]\setminus(S\cup M).
\]
Since $N^+(B_i)\setminus S\subseteq P_i$, Claim~\ref{CL: degree bound}
gives $|P_i|=n-O(n^{7/9})$ for every $i\in[t]$.  More precisely, the
definition of $Z$ yields
\begin{equation}\label{EQ: unified Z bound}
 |Z|=\left|([n]\setminus S)\setminus\bigcap_{i=1}^{t}P_i\right|
 \le\sum_{i=1}^{t}|([n]\setminus S)\setminus P_i|
 =O(n^{7/9}).
\end{equation}
In particular, $|M|=n-O(n^{7/9})$.  Since $Z\cap S=\emptyset$, for
every $z\in Z$ the fact that $z\notin M$ supplies an index
$i\in[t]$ such that
\[
 d_{\mathcal F}^*(B_i\cup\{z\})<s.
\]
Choose one such edge and denote it by $B_z$.  Define the missing-edge
family
\[
 \mathcal D'_H=
 \left\{B_z\cup\{z\}\cup X:
 z\in Z,\ X\in{M\choose r-3},\
 B_z\cup\{z\}\cup X\notin\mathcal F\right\}.
\]
For a fixed $z\in Z$, the family
\[
 \mathcal Q_z=\left\{X\in{M\choose r-3}:
 B_z\cup\{z\}\cup X\in\mathcal F\right\}
\]
has matching number less than $s$; otherwise its $s$ disjoint
members, after adjoining the common kernel $B_z\cup\{z\}$, contradict
the choice of $B_z$.  A maximal matching in $\mathcal Q_z$ has union
of size at most $(s-1)(r-3)$ and this union is a vertex cover of
$\mathcal Q_z$.  Hence
\[
 |\mathcal Q_z|\le
 (s-1)(r-3){|M|-1\choose r-4}.
\]
The members counted for distinct $z$ are distinct, because each of
them meets $Z$ exactly in $z$.  Consequently,
\begin{equation}\label{EQ: unified missing family}
 |\mathcal D'_H|\ge |Z|\left({|M|\choose r-3}
 -(s-1)(r-3){|M|-1\choose r-4}\right)
 =\Omega(|Z|n^{r-3}).
\end{equation}
On the other hand, $\mathcal F[Z]$ is $\mathcal L_k^r$-free in both
cases, so Theorem~\ref{THM: asymptotic} (with the bounded cases
absorbed into the constant) gives
\begin{equation}\label{EQ: unified exceptional family}
 |\mathcal F[Z]|=O(|Z|^{r-2}).
\end{equation}

We claim the following single containment for both cases:
\begin{equation}\label{EQ: unified containment}
 \mathcal F\subseteq
 (\mathcal M_H\setminus\mathcal D'_H)\cup\mathcal F[Z].
\end{equation}
Fix $F\in\mathcal F\setminus\mathcal F[Z]$ and put $T=F\cap S$.
Suppose first that $T=\emptyset$.  Since
$[n]=S\sqcup M\sqcup Z$ and $F\nsubseteq Z$,
there is a vertex $y\in F\cap M$.  By
\eqref{EQ: unified core properties}, choose a matching
$B_{i_1},\ldots,B_{i_{k-1}}$ in $H$.  For every $j\in[k-1]$,
\[
 d_{\mathcal F}^*(B_{i_j}\cup\{y\})\ge s.
\]
These triples form a linear $(k-1)$-star centered at $y$ and are disjoint from {$F\setminus\{y\}$}.   The greedy extension used in
Claim~\ref{CL: expansion}, with $F$ prescribed as the first petal,
therefore produces the remaining $k-1$ petals of an
$\mathcal L_k^r$, a contradiction.
Thus $T\ne\emptyset$.  If some $x\in T$ is isolated in $H[T]$, then
all its $k-1$ neighbors $u_1,\ldots,u_{k-1}$ lie outside $F$.  Choose
distinct $y_1,\ldots,y_{k-1}\in M\setminus F$, which is possible for
large $n$.  For each $j$,
\[
 d_{\mathcal F}^*(\{x,u_j,y_j\})\ge s.
\]
The resulting triples form a linear $(k-1)$-star centered at $x$,
and  avoid {$F\setminus\{x\}$}.  The same greedy extension together with
$F$ again gives an $\mathcal L_k^r$, a contradiction.  Therefore
$H[T]$ has no isolated vertex, and hence $F\in\mathcal M_H$.  Finally,
$\mathcal D'_H\cap\mathcal F=\emptyset$ by its definition, proving
\eqref{EQ: unified containment}.

Every member of $\mathcal D'_H$ lies in $\mathcal M_H$, because its
trace on $S$ is the edge $B_z$, whereas $\mathcal F[Z]$ is disjoint
from $\mathcal M_H$.  It follows that
\begin{equation}\label{EQ: unified comparison}
 |\mathcal F|\le
 |\mathcal M_H|-|\mathcal D'_H|+|\mathcal F[Z]|.
\end{equation}
In case (I), $|\mathcal M_H|=|\mathcal L_2(n,r,k,1)|=|\mathcal L|$.
In case (II), equations~\eqref{EQ: graph-model-expansion} and
\eqref{EQ: three-trace-count}, together with
Theorem~\ref{THM: number of triangles}, give
\[
 |\mathcal M_H|\le|\mathcal F_1(G_1;n,r)|
 =|\mathcal L_1(n,r,k,1)|=|\mathcal L|,
\]
with equality if and only if $H\cong G_1$.  If $Z\ne\emptyset$, then
\eqref{EQ: unified Z bound}--\eqref{EQ: unified exceptional family}
and $r\ge5$ imply
\[
 |\mathcal D'_H|-|\mathcal F[Z]|
 \ge c|Z|n^{r-3}-C|Z|^{r-2}>0
\]
for all sufficiently large $n$.  Equation~\eqref{EQ: unified comparison}
would then give $|\mathcal F|<|\mathcal L|$, contrary to the initial
assumption.  Hence $Z=\emptyset$.  The containment and the preceding
graph-model comparison now give
\[
 |\mathcal L|\le|\mathcal F|\le|\mathcal M_H|\le|\mathcal L|.
\]
Thus all inequalities are equalities.  In case (I), this yields
$\mathcal F=\mathcal M_H\cong\mathcal L_2(n,r,k,1)$.  In case (II),
it first yields $H\cong G_1$ and then
$\mathcal F=\mathcal M_H\cong\mathcal L_1(n,r,k,1)$.

For completeness, the claimed families do satisfy the forbidden
conditions.  In either core graph $H$ one has
$\Delta(H)=\nu(H)=k-1$.  A linear $k$-star in $\mathcal M_H$ centered
in $S$ would require $k$ distinct neighbors of its center in $H$,
while one centered outside $S$ would require a matching of size $k$
in $H$; both are impossible.  In case (II), a linear $(k-1)$-star
centered in $S$ uses at least $k$ vertices of $S$, and one centered
outside $S$ uses at least $2(k-1)>k$ vertices of $S$.  Hence two
vertex-disjoint such stars would use at least $2k>|S|=2k-1$ core
vertices.  Therefore the case-(II) construction is also
$2\mathcal L_{k-1}^r$-free.  This proves both theorems.
}
\end{proof}

\section{{Proof of Theorem~\ref{THM: Main}}}\label{SEC: proof main}

{We first verify the lower-bound constructions and then
prove the upper bound and its equality statement.}

\subsection{{The lower-bound constructions}}

\begingroup

\begin{lem}\label{LEM: tree lower bounds}
Let $T,A,B,a,k,A_0,B_0$ be as in Theorem~\ref{THM: Main}; in
particular, $r\ge5$ and $k\ge3$ is odd. Then:
\begin{enumerate}
 \item $\mathcal L_1(n,r,k,a)$ is $T^{(r)}$-free.
 \item If $B_0=\emptyset$, then $\mathcal L_2(n,r,k,a)$ is
       $T^{(r)}$-free.
 \item If $B_0\ne\emptyset$ and
       $b=\min_{y\in B_0}d_T(y)-2\ge1$, then
       $\mathcal L_2(n,r,k,a;R_{b-1})$ is $T^{(r)}$-free for every
       choice of $R_{b-1}\in R(a-1,b-1)$ on $W$.
\end{enumerate}
\end{lem}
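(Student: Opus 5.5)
We argue by contradiction. Suppose $\varphi$ embeds $T^{(r)}$ into one of the three families, say $\mathcal L=S_W^{\ast}\cup\mathcal F_i\cup\mathcal F_3$, where $S_W^\ast$ stands for $S_W$ or $S_W(R_{b-1})$. For each tree edge $e=xy$ write $\varphi(e)$ for the corresponding hyperedge $\varphi(x)\varphi(y)\cup P_e$.

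\textbf{Step 1: removing the part of the copy that meets $W$.} A $W$-vertex that is the image of a tree vertex $v$ meets every $\varphi(e)$ with $e\ni v$. A $W$-vertex lying in an expansion set $P_e$ meets only $\varphi(e)$. Hence the tree edges $e$ with $\varphi(e)\cap W\ne\emptyset$ are covered by a family of at most $a-1$ ``cover elements'', each being a vertex or a single edge of $T$. Let $T'$ be the forest of the remaining edges. Their images lie in $V\setminus W$, so they belong to $\mathcal F_i\cup\mathcal F_3$.

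\textbf{Step 2: a combinatorial lemma about $T$.} For $X\subseteq A$, the subgraph spanned by $X\cup N_T(X)$ is a forest with at least $k|X|$ edges. This gives $|N_T(X)|\ge (k-1)|X|+1$, so Hall's condition holds with a large surplus and $T$ has a matching saturating $A$. I would prove the following: after deleting at most $a-1$ cover elements, some $x\in A$ is untouched and keeps at least $k$ incident edges in $T'$. The argument is a weighted count. Each deleted $A$-vertex removes one candidate. A deleted edge or $B$-vertex can destroy a vertex $x$ with $d_T(x)=k$ only if it removes an edge at $x$, and the slack $d_T(x)-k$ absorbs extra deletions at vertices of larger degree. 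A $B$-vertex adjacent to two minimum-degree vertices of $A$ is exactly a vertex of $B_0$, and this is where the cases split.

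\textbf{Step 3: excluding the star.} If $T'$ contains a $K_{1,k}$ centered at $x$, its expansion is an $\mathcal L_k^r$ inside $\mathcal F_i\cup\mathcal F_3$ on $V\setminus W$. As in the final paragraph of the proof of Theorems~\ref{LEM: linear star} and~\ref{LEM: two linear star}, this is impossible: a center in $S$ needs $k$ neighbors in a core graph of maximum degree $k-1$, and a center outside $S$ needs a matching of size $k$ in a core graph with $\nu=k-1$. For even $k$ the extra traces in $\mathcal F_3$ must also be checked. This contradiction settles part~1, since $G_1$ is also $\{K_{1,k},kK_2,2K_{1,k-1}\}$-free by Theorem~\ref{LEM: 2K_{1,k-1}}. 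In part~1, Step~2 may fail when two minimum-degree vertices $x_1,x_2\in A$ each keep only $k-1$ edges because a common $B$-neighbor was deleted. Then $T'$ still contains two (essentially vertex-disjoint) $K_{1,k-1}$'s, and their expansions form a $2\mathcal L_{k-1}^r$, which $\mathcal L_1$ does not contain.

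\textbf{Step 4: parts 2 and 3.} When $B_0=\emptyset$, no single deleted $B$-vertex touches two minimum-degree $A$-vertices. The counting in Step~2 then always produces an $x$ with $k$ surviving edges, and Step~3 rules out $\mathcal L_2$, giving part~2. For part~3, edges of $S_W(R_{b-1})$ that meet $W$ in at least two vertices must contain an edge of $R_{b-1}$, whose maximum degree is $b-1$. A vertex $y\in B_0$ that would be needed to destroy two minimum-degree $A$-vertices through $W$ has degree at least $b+2$ and lies in hyperedges pairing it with several $W$-images. So $y$ cannot be mapped so that all its incident hyperedges are legal, and the deficit from Step~2 cannot be realized.

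I expect the main obstacle to be the precise counting lemma of Step~2, with its interplay between $B$-vertices mapped into $W$ and vertices in $B_0$. In part~3 the additional difficulty is bookkeeping how $\Delta(R_{b-1})\le b-1$ restricts which configurations of tree vertices can occupy $W$. I would first attempt the count by induction on $a$, deleting a leaf-star of $T$ rooted at a vertex of $A$.
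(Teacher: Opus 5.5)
\textbf{Gap: your Step 2 lemma is false even when $B_0=\emptyset$, and Step 4 depends on it.} Take $a=3$, $A=\{x_1,x_2,x_3\}$, and let $T$ contain the path $x_1y_1x_3y_2x_2$. Give $x_1,x_2$ degree $k$ and $x_3$ degree $k+1$, with all other neighbors leaves. Then $A_0=\{x_1,x_2\}$ and $B_0=\emptyset$, since each $y_j$ sees only one vertex of $A_0$. Yet deleting the $a-1=2$ cover elements $y_1,y_2$, i.e.\ mapping them into $W$, leaves every $x_i$ with exactly $k-1$ edges. The slack of $x_3$ absorbs nothing, because a single deleted $B$-vertex hits $x_3$ and an $A_0$-vertex at once, and such a $B$-vertex is not in $B_0$.

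So $T'=3K_{1,k-1}$ contains no $K_{1,k}$, and Step 3 gives no contradiction. You also cannot fall back on $2\mathcal L_{k-1}^r$: $\mathcal F_2$, with core $2K_k$, does contain two vertex-disjoint linear $(k-1)$-stars. Hence your claim that $B_0=\emptyset$ forces a surviving $K_{1,k}$ fails, and parts 2 and 3 are unproved. The same example shows that in part 1 the failure mode is not only ``a common $B_0$-neighbor was deleted.'' Part 1 is still repairable in your framework. A hit count on the forest $T[A'\cup D_B]$ shows that two undeleted $A$-vertices keep at least $k-1$ edges each, and with some care about a shared surviving neighbor their stars can be taken vertex-disjoint. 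You would have to prove that version, not the one you stated.

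\textbf{Missing idea: the size of the core.} Every expanded edge avoiding $W$ has an edge of $G_i$ in its trace. So an expanded $K_{1,k-1}$ uses at least $k$ core vertices, and the whole surviving forest $T'$ must fit into $|S|\le 2k$ vertices. The paper does this with the $2$-crosscut graph $\widetilde T$:
\begin{itemize}
\item a matching bound shows that no vertex of $A$ is split;
\item then $u\ge h+p+1$, combined with $|V(T')|\ge e(T')+1\ge (k-1)u-h+2$, forces $p=0$, $h=1$ and $u=2$, with a separate check for $k=3$, $h=2$.
\end{itemize}
Only at this point does $B_0$ enter: the two surviving centers lie in $A_0$ and share the $W$-vertex $y$. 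The same tight count also gives $W=\{y\}\cup(A\setminus\{x_1,x_2\})$. That is exactly what part 3 needs, because then each edge $yz$ with $z\in N_T(y)\setminus\{x_1,x_2\}$ has $W$-trace exactly $\{y,z\}$ and must be an edge of $R_{b-1}$. Your Step 4 simply asserts that $y$ ``lies in hyperedges pairing it with several $W$-images,'' without any argument that those neighbors of $y$ are mapped into $W$.
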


\begin{proof}

Suppose one of these constructions contains a copy $\mathcal H$ of $T^{(r)}$. Identify the original vertices of $T$ with their images in $\mathcal H$. Let $G_i$ be the core graph of the corresponding construction for $i\in\{1,2\}$, and set $S=V(G_i)$.

Since $k$ is odd, $\mathcal F_3=\emptyset$. Moreover,
\[
 |S|\le 2k,\qquad \Delta(G_i)=k-1,\qquad \nu(G_i)=k-1.
\]
Every edge of the construction that avoids $W$ has a nonempty trace
on $S$, and the induced graph on this trace has no isolated vertices.
In particular, such a trace contains an edge of $G_i$.

We first define the auxiliary graph that will be used throughout the
proof. The vertices of $T$ are called the original vertices of $T^{(r)}$ (also of $\mathcal{H}$), and the vertices added during the expansion are called the private vertices. 

For every expanded edge $F$ of $\mathcal H$, we choose a
two-element set $f_F\subseteq F$ as follows.

Retain every original vertex of $F$ that belongs to $W\cup S$.

\begin{itemize}
\item If two original vertices of $F$ belong to $W\cup S$, let
$f_F$ be these two vertices.

\item Suppose that exactly one original vertex $v$ of $F$ belongs to
$W\cup S$. If $v\in W$, choose any private vertex of $F$ and put it
together with $v$ in $f_F$.

If $v\in S$ and $F\cap W\neq\emptyset$, choose a vertex of
$F\cap W$ as the second vertex of $f_F$. This vertex is private with
respect to the copy of $T$ because $W\cap S=\emptyset$.

If $v\in S$ and $F\cap W=\emptyset$, then
$G_i[F\cap S]$ contains an edge. Choose a neighbor of $v$ in this
graph and let $f_F$ consist of $v$ and this neighbor.

\item Suppose that no original vertex of $F$ belongs to $W\cup S$.
If $F\cap W\neq\emptyset$, choose one vertex of $F\cap W$ and one
additional private vertex of $F$.

If $F\cap W=\emptyset$, then $F\cap S\neq\emptyset$ and
$G_i[F\cap S]$ has no isolated vertices. Choose an edge of
$G_i[F\cap S]$ and use it as $f_F$.
\end{itemize}

There are always enough private vertices since $r-2\ge2$.

For a graph $G$ and a vertex $v$, we say that splitting $v$ means
replacing $v$ by distinct vertices $v_e$, one for each edge $e$
incident with $v$, and replacing the edge $vu$ by $v_eu$. Thus every
new vertex $v_e$ is a leaf. Splitting several vertices is performed
simultaneously.

Let $\widetilde T$ be the graph with vertex set
\[
Y=\bigcup_{F\in E(\mathcal H)}V(f_F)
\]
and edge set
\[
E(\widetilde T)=\{f_F:F\in E(\mathcal H)\}.
\]

Every original vertex belonging to $W\cup S$ is retained in $f_F$
for its incident expanded edge. Every original vertex outside
$W\cup S$ is represented by private vertices in its incident expanded edges. Consequently, $\widetilde T$ is
obtained from $T$ by splitting every original vertex outside
$W\cup S$, possibly followed by deleting isolated vertices or leaves.
In particular, $\widetilde T$ is a forest.

Also, by the choice of the pairs $f_F$, every vertex of $Y$ that
belongs to $F$ is one of the two vertices selected for $f_F$. Hence $Y$ is a $2$-crosscut of the copy $\mathcal H$.

Let $T'$ be the graph consisting of the pairs $f_F$ that avoid $W$.
By construction, every edge of $T'$ lies in $G_i$, and therefore
\[
\Delta(T')\le k-1,\qquad \nu(T')\le k-1.
\]
Indeed, the edges of $T'$ incident with a fixed vertex $v$ use
distinct neighbors of $v$ in $G_i$, while a matching in $T'$ gives
a matching of the same size in $G_i$.

Let
\[
\mathcal P=\{f_F\in E(T'):F\cap W\neq\emptyset\},
\qquad p=|\mathcal P|,
\]
and let
\[
T_0=T'-\mathcal P.
\]
For every pair in $\mathcal P$, both endpoints are original vertices
in $S$, while $F\cap W$ contains a private vertex not belonging to
$Y$. These private vertices are distinct for different members of
$\mathcal P$. Consequently,
\[
|W\cap Y|\le a-1-p. \tag{1}
\]

We next show that no vertex of $A$ is split. Suppose that some
$x\in A$ is split. The split copies of $x$ can be
matched to all the distinct neighbors of $x$, giving \(d_T(x)\)
matching edges. Every other vertex of \(A\) has at least one child in
\(B\), since its degree is at least \(k\ge3\). Choose one such child
for each vertex of \(A\setminus\{x\}\). These children are distinct
and none is a neighbor of the root \(x\). Hence the resulting graph
contains a matching of size
\[
d_T(x)+a-1\ge k+a-1. \tag{2}
\]

On the other hand, every matching of $\widetilde T$ contains at most
\(k-1\) edges from \(T_0\), at most \(p\) edges from \(\mathcal P\),
and at most \(|W\cap Y|\) further edges. By (1),
\[
\nu(\widetilde T)
 \le (k-1)+p+|W\cap Y|
 \le a+k-2,
\]
contradicting (2). Therefore no vertex of \(A\) is split. Thus every
vertex of \(A\) belongs to \(Y\) and retains its original degree.

Put
\[
\widetilde B=Y\setminus A,\qquad
U=A\setminus W,\qquad u=|U|,\qquad
Q=\widetilde B\cap W,\qquad h=|Q|.
\]
Since \(T'\) consists of the pairs avoiding \(W\), all its edges join
\(U\) to \(\widetilde B\setminus W\). From (1), we obtain
\[
a-u+h=|W\cap Y|\le a-1-p,
\]
and hence
\[
u\ge h+p+1. \tag{3}
\]

If \(h=0\), every vertex of \(U\) has degree at least \(k\) in
\(T'\), so
\[
e(T')\ge ku.
\]
If \(h\ge1\), then the graph induced by \(U\cup Q\) is a forest and
therefore has at most \(u+h-1\) edges. Hence
\[
e(T')
 \ge ku-e_{\widetilde T}(U,Q)
 \ge ku-(u+h-1)
 =(k-1)u-h+1. \tag{4}
\]
Since \(T'\) is a forest, whenever the right-hand side of (4) is
positive,
\[
|V(T')|\ge e(T')+1.
\]

Suppose first that \(p\ge1\). If \(h=0\), then (3) gives \(u\ge2\),
and therefore
\[
|V(T')|\ge ku+1\ge2k+1>|S|.
\]
If \(h\ge1\), then \(u\ge h+2\), and (4) gives
\[
|V(T')|
 \ge (k-1)(h+2)-h+2
 =2k+(k-2)h
 >2k\ge |S|.
\]
Both cases are impossible. Thus
\[
p=0.
\]

Now \(T'=T_0\). The case \(h=0\) is impossible, since every vertex
of \(U\) would then have degree at least \(k\) in \(T'\), contradicting
\(\Delta(T')\le k-1\).

Suppose that \(h\ge2\). By (3), \(u\ge h+1\), and (4) gives
\[
|V(T')|
 \ge (k-2)h+k+1.
\]
If \(k\ge5\), this is at least \(3k-3>2k\), again impossible.

It remains to consider \(k=3\). The only possible case is
\(h=2\), \(u=3\), and
\[
|V(T')|=6,\qquad e(T')=5.
\]
Thus \(S=V(G_2)\) and \(G_2=2K_3\). Since all six core vertices
belong to \(Y\), the identity \(F\cap Y=f_F\) implies that every
edge of \(T'\) is an edge of \(2K_3\). But a forest contained in two
vertex-disjoint triangles has at most four edges, contradicting
\(e(T')=5\).

Therefore \(h=1\). Write \(Q=\{y\}\). By (3), \(u\ge2\). For every
\(v\in U\), all neighbors of \(v\) in \(\widetilde T\) lie either in
\(\widetilde B\setminus W\) or at \(y\). If \(v\) were not adjacent to
\(y\), then
\[
d_{T'}(v)=d_{\widetilde T}(v)=d_T(v)\ge k,
\]
contrary to \(\Delta(T')\le k-1\). Hence every vertex of \(U\) is
adjacent to \(y\), and
\[
d_T(v)=k,\qquad d_{T'}(v)=k-1.
\]

The remaining \(k-1\) neighbors of distinct vertices of \(U\) are
pairwise disjoint, since otherwise \(\widetilde T\) would contain a
cycle. Thus \(T'\) is the disjoint union of \(u\) copies of
\(K_{1,k-1}\), and consequently it uses \(ku\ge2k\) vertices.

For \(G_1\), we have \(|S|=2k-1\), which is impossible. This proves
assertion (1).

For \(G_2\), we must have \(u=2\). Let \(U=\{x_1,x_2\}\). The vertex
\(y\) has degree at least two in \(\widetilde T\), so \(y\) is an
unsplit original vertex of \(B\). Moreover, \(x_1,x_2\in A_0\), and
therefore \(y\in B_0\). This contradicts \(B_0=\emptyset\), proving
assertion (2).

Finally, suppose that \(B_0\neq\emptyset\), and put
\[
b=\min_{z\in B_0}d_T(z)-2\ge1.
\]
Since \(d_T(z)\le a\), we have \(b-1\le a-2\), so a graph
\(R_{b-1}\in R(a-1,b-1)\) exists.

In the case \(G_2\), we have \(u=2\) and \(h=1\). Thus all vertices
of \(W\) are occupied by
\[
\{y\}\cup(A\setminus\{x_1,x_2\}).
\]
For every neighbor \(z\in N_T(y)\setminus\{x_1,x_2\}\), the
corresponding expanded edge has trace on \(W\) exactly \(\{y,z\}\).
By the definition of \(S_W(R_{b-1})\), this pair must be an edge of
\(R_{b-1}\). Hence \(y\) has at least
\[
d_T(y)-2\ge b
\]
neighbors in \(R_{b-1}\), contradicting
\[
\Delta(R_{b-1})\le b-1.
\]
This proves assertion (3).
\end{proof}

\subsection{{The upper bound and equality case}}

Given a hypergraph $\mathcal{F}$ and a vertex $x\in V(\mathcal{F})$, let $\mathcal{L}_{\mathcal{F}}(x)=\{F\backslash\{x\} : x\in F\in \mathcal{F}\}$.
Given a set $A\subseteq V(\mathcal{F})$, let $\mathcal{L}_{\mathcal{F}}(A)=\cap_{x\in A}\mathcal{L}_{\mathcal{F}}(x)$. In other words,
$$\mathcal{L}_{\mathcal{F}}(A)=\{D\subseteq V(\mathcal{F})\backslash A : \, D\cup\{a\}\in \mathcal{F} \text{ for any } a\in A\}.$$
We call $\mathcal{L}_{\mathcal{F}}(A)$ the {\it common link hypergraph} of $A$ in $\mathcal{F}$.
%{This common link is an intersection of single-vertex links; it differs from $\operatorname{L}_{\mathcal F}(A)$, whose members complete the whole set $A$ to an edge.}
Let $\mathcal{K}_p^{(p)}(s)$ denote the complete $p$-partite $p$-graph with $s$ vertices in each part.

\begingroup
The following formulation combines the structural stability theorem
of F\"uredi and Jiang with their comparison lemma.  We include the
argument for the comparison and its equality case.

\begin{lem}[Theorem 4.2 and Lemma 9.3 in~\cite{FJ15}]
\label{THM: construction}
Let $r\ge4$, and let $\mathcal H$ be an $r$-graph on $s$ vertices
embeddable in a $2$-reducible $r$-tree.  Suppose that
$\sigma(\mathcal H)=\sigma\ge2$, and put
\[
 \beta=\frac{1}{(r-2)(\sigma+1)+1}.
\]
For every fixed $\gamma>0$, there exists $n_0=n_0(r,s,\gamma)$
such that the following holds whenever $n\ge n_0$.
Let $\mathcal F\subseteq{[n]\choose r}$ be $\mathcal H$-free with
\[
 |\mathcal F|\ge(\sigma-1){n\choose r-1}-n^{r-1-\beta}.
\]
(1) Then there exists $A\subseteq[n]$ with $|A|=\sigma-1$ such that
\begin{equation}\label{EQ: construction common link}
 |\mathcal L_{\mathcal F}(A)|
 \ge {n\choose r-1}-(r-1)(1+2s^2)n^{r-1-\beta}.
\end{equation}
(2) For any such $A$, let $\mathcal L^*\subseteq\mathcal L_{\mathcal F}(A)$
consist of all edges that lie in a copy of
$\mathcal K_{r-1}^{(r-1)}(s)$ contained in
$\mathcal L_{\mathcal F}(A)$.
{We take its vertex set to be its support,
$V(\mathcal L^*)=\bigcup_{D\in\mathcal L^*}D$, so no isolated
vertices are adjoined to $\mathcal L^*$.} Define
\[
 \mathcal F_A=\{F\in\mathcal F:F\cap A\ne\emptyset\},
 \qquad
 S_A=\left\{F\in{[n]\choose r}:F\cap A\ne\emptyset\right\},
\]
and let $\mathcal B\subseteq\mathcal F\setminus\mathcal F_A$.
Suppose either $\mathcal B=\emptyset$, or there exists an
$(r-1)$-graph $\mathcal D$ on $[n]\setminus A$ such that
\[
 \partial_{r-2}(\mathcal D)\cap\partial_{r-2}(\mathcal L^*)
 =\emptyset
 \qquad\text{and}\qquad
 |\mathcal D|\ge\gamma|\mathcal B|.
\]
Then
\[
 |\mathcal F_A\cup\mathcal B|
 \le |S_A|={n\choose r}-{n-\sigma+1\choose r}.
\]
Equality holds if and only if $\mathcal B=\emptyset$ and
$\mathcal F_A=S_A$.
\end{lem}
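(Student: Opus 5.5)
Part (1) is Theorem 4.2 of \cite{FJ15}, which I would quote directly. The paper says it supplies only the comparison in (2) and its equality case, so the plan concerns that comparison.

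\emph{Reduction.} Since $\mathcal F_A\subseteq S_A$ trivially, it suffices to show
\[
 |\mathcal B|\le |S_A\setminus\mathcal F_A|,
\]
with strict inequality whenever $\mathcal B\ne\emptyset$. If $\mathcal B=\emptyset$ the inequality is immediate, and equality forces $\mathcal F_A=S_A$. So assume $\mathcal B\ne\emptyset$ and let $\mathcal D$ be as in the hypothesis.

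\emph{Step 1: each member of $\mathcal D$ yields missing edges.} Fix $D\in\mathcal D$. Because $\partial_{r-2}(D)$ is disjoint from $\partial_{r-2}(\mathcal L^*)$, no $(r-2)$-subset of $D$ lies in an edge of $\mathcal L^*$. I would then argue that for some $a\in A$ the $r$-set $D\cup\{a\}$ is absent from $\mathcal F$. This set lies in $S_A$, so it is an element of $S_A\setminus\mathcal F_A$.

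The argument is by contradiction. Suppose $D\in\mathcal L_{\mathcal F}(A)$. By \eqref{EQ: construction common link}, $\mathcal L_{\mathcal F}(A)$ is nearly complete. A supersaturation argument (Erd\H{o}s' theorem on complete $(r-1)$-partite subgraphs, applied to the dense link) should then place most $(r-2)$-shadows of the link inside $\partial_{r-2}(\mathcal L^*)$. One then needs either $D\in\mathcal L^*$, contradicting the disjointness of shadows, or a count showing that few $D$ escape.

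This is delicate, so I would rather count directly. Call an $(r-2)$-set $P$ \emph{bad} if $P\notin\partial_{r-2}(\mathcal L^*)$. Near-completeness of the link, together with the fact that every edge of $\mathcal L_{\mathcal F}(A)\setminus\mathcal L^*$ lies in no copy of $\mathcal K^{(r-1)}_{r-1}(s)$, should give the following: for each bad $P$, a proportion $1-o(1)$ of the $n-r+2$ extensions $P\cup\{v\}$ are missing from $\mathcal L_{\mathcal F}(A)$. Each missing extension contributes a missing $r$-edge $P\cup\{v,a\}$ of $S_A$ for some $a\in A$. Bad sets therefore produce about $(n/(r-1))\cdot|\partial_{r-2}\mathcal D|$ missing edges, counting each $(r-1)$-set at most $r-1$ times.

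\emph{Step 2: compare with $\mathcal B$.} Every $(r-1)$-graph satisfies $|\partial_{r-2}\mathcal D|\ge c|\mathcal D|^{(r-2)/(r-1)}$ by Kruskal--Katona, and also $|\mathcal D|\ge\gamma|\mathcal B|$. Combining these with Step 1 gives
\[
 |S_A\setminus\mathcal F_A|\ \ge\ c'\,n\,|\mathcal D|^{(r-2)/(r-1)} .
\]
Since $|\mathcal D|\le\binom{n}{r-1}$, we have $n|\mathcal D|^{(r-2)/(r-1)}\ge c''|\mathcal D|$, which is at least $c''\gamma|\mathcal B|$. This only yields a constant factor $c''\gamma$, which is not enough by itself.

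To fix this I would use that $\mathcal B$ avoids $A$ and is $\mathcal H$-free, together with the $2$-reducible tree structure. This should give an a priori bound $|\mathcal B|=O(n^{r-1-\beta'})$, so that $|\mathcal D|\le|\mathcal B|/\gamma$ is small. Then
\[
 n|\mathcal D|^{(r-2)/(r-1)}\ \ge\ |\mathcal D|\,n\,|\mathcal D|^{-1/(r-1)}\ \gg\ |\mathcal D|/\gamma\ \ge\ |\mathcal B|,
\]
once $n\ge n_0(r,s,\gamma)$. This gives the strict inequality and hence the equality characterization.

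\emph{Main obstacle.} The crux is Step 1: showing that bad $(r-2)$-sets have almost all their extensions missing from the common link. This uses the defining property of $\mathcal L^*$ and the near-completeness in \eqref{EQ: construction common link}, essentially as in Lemma 9.3 of \cite{FJ15}.
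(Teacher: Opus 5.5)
\textbf{There is a genuine gap: the claim you call the crux of Step~1 is false.} Your reduction to $|\mathcal B|<|S_A\setminus\mathcal F_A|$ is sound, and so is the overall plan of counting through bad $(r-2)$-sets and then applying Kruskal--Katona. But a bad $(r-2)$-set can have \emph{no} missing extensions at all.

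Here is an instance. Take $Q\subseteq[n]\setminus A$ with $2\le|Q|<s$, fix $a_1\in A$, and let $\mathcal F=S_A\setminus\{\{a_1\}\cup D:|D\cap Q|=1\}$. This is legitimate whenever $S_A$ is $\mathcal H$-free, e.g.\ for the expanded trees of Theorem~\ref{THM: Main}, and it loses only $O(n^{r-2})$ edges. Its common link misses exactly the $(r-1)$-sets meeting $Q$ once.

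A copy of $\mathcal K_{r-1}^{(r-1)}(s)$ through some $q\in Q$ has a vertex outside $Q$ in every other part. Hence it has a transversal meeting $Q$ only in $q$, which is missing from the link. So $V(\mathcal L^*)\cap Q=\emptyset$, every $(r-2)$-set meeting $Q$ is bad, and $\mathcal D=\{D:|D\cap Q|\ge2\}$ satisfies the shadow hypothesis. Yet an $(r-2)$-set $P$ with $|P\cap Q|\ge2$ (available since $r\ge4$) is bad, while every extension $P\cup\{v\}$ lies in $\mathcal L_{\mathcal F}(A)$. Likewise, members of $\mathcal D$ need not be missing from the link. So no per-set argument can work; the missing sets appear only in aggregate.

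The correct replacement is a double count. Put $\mathcal M=\binom{[n]\setminus A}{r-1}\setminus\mathcal L_{\mathcal F}(A)$. Every $(r-1)$-set lies in the same number $\kappa$ of copies of $\mathcal K_{r-1}^{(r-1)}(s)$ in the complete $(r-1)$-graph on $[n]\setminus A$. For $E\in\mathcal L_{\mathcal F}(A)\setminus\mathcal L^*$, each of these $\kappa$ copies contains a member of $\mathcal M$. Each pair (missing set, copy containing it) is charged at most $s^{r-1}$ times. Hence $|\mathcal L_{\mathcal F}(A)\setminus\mathcal L^*|\le s^{r-1}|\mathcal M|$. Every $(r-1)$-set through a bad $(r-2)$-set avoids $\mathcal L^*$, so this yields $(1+s^{r-1})|\mathcal M|\ge\frac{n-\sigma-r+3}{r-1}\,|\partial_{r-2}\mathcal D|$. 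That is the aggregate form of the inequality you wanted from Step~1.

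\textbf{Step~2 has a second gap.} The hypothesis is $|\mathcal D|\ge\gamma|\mathcal B|$, so $|\mathcal D|\le|\mathcal B|/\gamma$ does not follow; the inequality runs the other way. Also, $\mathcal H$-freeness of $\mathcal B$ by itself only gives $|\mathcal B|=O(n^{r-1})$.

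The smallness you need comes from the link instead. The shadow condition forces $\mathcal D\cap\mathcal L^*=\emptyset$, so $|\mathcal D|\le|\mathcal M|+|\mathcal L_{\mathcal F}(A)\setminus\mathcal L^*|\le(1+s^{r-1})|\mathcal M|=O(n^{r-1-\beta})$ by \eqref{EQ: construction common link}. Write $|\mathcal D|=\binom{x}{r-1}$; then $x=O(n^{1-\beta/(r-1)})$. Lov\'asz's form of Kruskal--Katona gives $|\partial_{r-2}\mathcal D|\ge\frac{r-1}{x-r+2}|\mathcal D|$. Combining with the double-count bound,
$|S_A\setminus\mathcal F_A|\ge|\mathcal M|\ge c\,n^{\beta/(r-1)}|\mathcal D|\ge c\gamma\, n^{\beta/(r-1)}|\mathcal B|>|\mathcal B|$
whenever $\mathcal B\ne\emptyset$ and $n\ge n_0(r,s,\gamma)$. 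The first inequality holds because $M\mapsto M\cup\{a_M\}$ is injective, where $a_M\in A$ is chosen with $M\cup\{a_M\}\notin\mathcal F$. This gives the strict inequality and hence the equality characterization.

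For comparison, the paper itself offers no argument here beyond citing Theorem~4.2 and Lemma~9.3 of F\"uredi--Jiang.
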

The existence of $A$ follows from Theorem~4.2 in~\cite{FJ15},
and the inequality follows from Lemma~9.3 therein.
Moreover, the proof of Lemma~9.3 gives a strict inequality
when $\mathcal B\ne\emptyset$ and $n$ is sufficiently large.
When $\mathcal B=\emptyset$, equality holds precisely when
$\mathcal F_A=S_A$.

\endgroup

{

To apply Lemma~\ref{THM: construction}, the residual family
$\mathcal B$ will be controlled as follows: we take
$\mathcal D=\partial_{r-1}(\mathcal B)$, so that
$\partial_{r-2}(\mathcal D)\cap\partial_{r-2}(\mathcal L^*)=\emptyset$, and $|\mathcal D|$ is bounded by the following lemma:

\begin{lem}[\cite{FJ15}]\label{LEM:tree-shadow bound}
Let $\mathcal H$ be an $r$-tree with $p$ vertices, and let
$\mathcal F$ be an $r$-graph on $[n]$ containing no copy of
$\mathcal H$. Then
\[
|\mathcal F|\le(p-r)\,|\partial_{r-1}(\mathcal F)|.
\]
In particular,
\[
\mathrm{ex}_r\big(n,\mathcal H\big)\le(p-r)\binom{n}{r-1}.
\]
\end{lem}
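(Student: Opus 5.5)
The plan is a deletion argument followed by a greedy embedding. First I reduce $\mathcal F$ to a subfamily in which every $(r-1)$-set of the shadow has large degree, charging every deleted edge to a distinct $(r-1)$-set of $\partial_{r-1}(\mathcal F)$. Then I show that a nonempty family with this degree property contains every $r$-tree on $p$ vertices.

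\emph{Deletion.} Set $\mathcal F_0=\mathcal F$. While the current family $\mathcal F_j$ has some $D\in\partial_{r-1}(\mathcal F_j)$ with $\deg_{\mathcal F_j}(D)\le p-r$, delete all edges containing $D$. This removes at most $p-r$ edges. Afterwards $D\notin\partial_{r-1}(\mathcal F_{j+1})$, and $\partial_{r-1}(\mathcal F_{j+1})\subseteq\partial_{r-1}(\mathcal F_j)$, so distinct steps use distinct members of $\partial_{r-1}(\mathcal F)$. If the process empties the family, then $|\mathcal F|\le(p-r)|\partial_{r-1}(\mathcal F)|$ immediately. Otherwise it stops at a nonempty $\mathcal F'\subseteq\mathcal F$ in which every $(r-1)$-subset of every edge has degree at least $p-r+1$. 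The main point is to show that such an $\mathcal F'$ contains $\mathcal H$, which is a contradiction. The displayed consequence then follows from $|\partial_{r-1}(\mathcal F)|\le\binom n{r-1}$.

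\emph{Embedding.} Let $E_1,\dots,E_m$ be the tree ordering of $\mathcal H$, so that $I_i=E_i\cap\bigcup_{j<i}E_j\subseteq E_{\alpha(i)}$. Map $E_1$ onto any edge of $\mathcal F'$. Suppose $E_1,\dots,E_{i-1}$ are already embedded injectively with image vertex set $U$, where $|U|\le p-(r-|I_i|)$. Let $\phi(I_i)$ be the image of $I_i$; it lies inside $\phi(E_{\alpha(i)})\in\mathcal F'$. Since $E_i\neq E_{\alpha(i)}$ and both have size $r$, we get $|I_i|\le r-1$. Start from the edge $F=\phi(E_{\alpha(i)})$, which contains $\phi(I_i)$. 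We then need an edge of $\mathcal F'$ whose intersection with $U$ is exactly $\phi(I_i)$.

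\emph{Swapping old vertices.} This is the step where care is needed. While $F$ contains a vertex $v\in(F\cap U)\setminus\phi(I_i)$, consider the $(r-1)$-set $F\setminus\{v\}$. It lies in $\partial_{r-1}(\mathcal F')$, so it has at least $p-r+1$ extensions $w$ in $\mathcal F'$. The forbidden choices are the vertices of $U\setminus(F\setminus\{v\})$. There are at most $p-r$ of them: $|U|$ plus the fresh vertices already placed in $F$ is at most $p-1$ until $E_i$ is completed, and $r-1$ of these vertices are in $F\setminus\{v\}$. Hence some admissible $w$ exists. Replace $F$ by $(F\setminus\{v\})\cup\{w\}$. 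Each such swap keeps $F\in\mathcal F'$ and keeps $\phi(I_i)\subseteq F$. It strictly decreases $|F\cap U\setminus\phi(I_i)|$ and never reintroduces old vertices.

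When the swapping ends, $F\cap U=\phi(I_i)$ and $F$ has exactly $r-|I_i|$ fresh vertices, so we can extend $\phi$ to $E_i$. After $m$ steps this gives a copy of $\mathcal H$ in $\mathcal F'\subseteq\mathcal F$, which is the contradiction. I expect the only delicate point to be this vertex-count bookkeeping, which must guarantee $p-r$ forbidden vertices at most at each swap. It works because $\mathcal H$ has exactly $p$ vertices and at least one vertex of $E_i$ is still unembedded during every swap.
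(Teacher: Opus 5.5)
Your proof is correct. The deletion step charges at most $p-r$ edges to each member of $\partial_{r-1}(\mathcal F)$. In the swapping step, the count $|U\cup F|\le p-1$ leaves at most $p-r$ forbidden extensions of $F\setminus\{v\}$, and this also covers the case $I_i=\emptyset$ that the paper's not-necessarily-connected definition of an $r$-tree allows. The paper gives no proof of this lemma; it only quotes it from F\"uredi and Jiang~\cite{FJ15}, so there is no argument in the paper to compare yours against, and your deletion-plus-greedy-embedding proof is the standard argument for bounds of this type.
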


By lemma~\ref{LEM:tree-shadow bound}, we have that \[
|\partial_{r-1}(\mathcal{F}_{\le 1})| \ge \frac{|\mathcal{F}_{\le 1}|}{s},
\qquad s:=|V(T^{(r)})|-r.
\]

We apply Lemma~\ref{THM: construction} to expanded trees.  These are
$2$-reducible for $r\ge4$, as noted above.

\begin{lem}\label{LEM: constructions}
	
	Let $r\ge4$, let $T$ be a tree, and suppose $T^{(r)}$ has $s$
	vertices and $\sigma(T^{(r)})=\sigma\ge2$.  Put
	$\beta=((r-2)(\sigma+1)+1)^{-1}$, and let
	$n\ge n_0(r,s,1/s)$ be sufficiently large.
	Let $\mathcal F\subseteq{[n]\choose r}$ be $T^{(r)}$-free with
	$|\mathcal F|\ge(\sigma-1){n\choose r-1}-n^{r-1-\beta}$.
	Then the following statements hold.
	
	(a) There is a $(\sigma-1)$-set $W$ satisfying
	
	\[
	 |\mathcal L_{\mathcal F}(W)|\ge
	 {n\choose r-1}-(r-1)(1+2s^2)n^{r-1-\beta}.
	\]
	
	(b) Let $\mathcal{L}^*\subseteq \mathcal{L}_{\mathcal{F}}(W)$ consist of all edges of $\mathcal{L}_{\mathcal{F}}(W)$ that lie in copies of $\mathcal{K}_{r-1}^{(r-1)}(s)$ contained in $\mathcal L_{\mathcal F}(W)$, {with $V(\mathcal L^*)=\bigcup_{D\in\mathcal L^*}D$ as above}. Let $\mathcal{F}_W=\{F\in\mathcal{F} : F\cap W\neq\emptyset\}$. 
	Define
	$${\mathcal{F}_{\le 1}}=\{F\in\mathcal{F}\backslash\mathcal{F}_W:|F\cap V(\mathcal{L}^*)|\le 1\}$$
	$${\mathcal{F}_{\ge 2}}=\{F\in\mathcal{F}\backslash\mathcal{F}_W:|F\cap V(\mathcal{L}^*)|\ge 2\}.$$
	Then $$|\mathcal{F}_W\cup{\mathcal{F}_{\le 1}}|\le|S_W|= {n\choose r}-{n-\sigma+1\choose r}.$$
	Equality holds if and only if
	${\mathcal{F}_{\le 1}}=\emptyset$ and $\mathcal{F}_W=S_W$.

\end{lem}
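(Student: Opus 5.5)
Lemma~\ref{LEM: constructions} is a specialization of Lemma~\ref{THM: construction} with $\mathcal H=T^{(r)}$, $\gamma=1/s$, and $\mathcal B=\mathcal F_{\le1}$. We first verify the hypotheses of Lemma~\ref{THM: construction}. For $r\ge4$, $T^{(r)}$ is a linear $r$-tree in which every edge has $r-2\ge2$ private vertices. It is therefore itself a $2$-reducible $r$-tree, so it embeds in one trivially. It has $s$ vertices and $\sigma(T^{(r)})=\sigma\ge2$ by assumption, and the size hypothesis on $\mathcal F$ is exactly the one required. Part~(1) of Lemma~\ref{THM: construction} then yields a $(\sigma-1)$-set $A$ with the common-link bound \eqref{EQ: construction common link}. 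Naming it $W$ gives~(a).

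For~(b), we fix this $W$ and form $\mathcal L^*$ exactly as in part~(2) of Lemma~\ref{THM: construction}; then $\mathcal F_W$ and $S_W$ coincide with $\mathcal F_A$ and $S_A$. Clearly $\mathcal B:=\mathcal F_{\le1}\subseteq\mathcal F\setminus\mathcal F_W$. If $\mathcal B=\emptyset$, part~(2) applies directly. Otherwise we set $\mathcal D=\partial_{r-1}(\mathcal B)$. Since no member of $\mathcal B$ meets $W$, $\mathcal D$ is an $(r-1)$-graph on $[n]\setminus W$. We then check two things.

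\emph{Shadow disjointness.} Each $D\in\mathcal D$ lies in some $F\in\mathcal B$, so $|D\cap V(\mathcal L^*)|\le1$. Hence every $(r-2)$-subset of $D$ meets $V(\mathcal L^*)$ in at most one vertex. Every member of $\partial_{r-2}(\mathcal L^*)$, on the other hand, is an $(r-2)$-set inside $V(\mathcal L^*)$, and $r-2\ge2$. So the two $(r-2)$-shadows are disjoint. This is the point where $r\ge4$ is used a second time.

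\emph{Size.} $\mathcal B$ is a subfamily of a $T^{(r)}$-free family, so it is $T^{(r)}$-free. $T^{(r)}$ is an $r$-tree on $s$ vertices, so Lemma~\ref{LEM:tree-shadow bound} gives $|\mathcal B|\le(s-r)|\partial_{r-1}(\mathcal B)|\le s|\mathcal D|$, that is, $|\mathcal D|\ge\gamma|\mathcal B|$ with $\gamma=1/s$. Since $n\ge n_0(r,s,1/s)$, part~(2) of Lemma~\ref{THM: construction} gives $|\mathcal F_W\cup\mathcal F_{\le1}|\le|S_W|={n\choose r}-{n-\sigma+1\choose r}$.

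\emph{Equality case.} This is quoted from Lemma~\ref{THM: construction}: equality holds if and only if $\mathcal B=\emptyset$ and $\mathcal F_W=S_W$, which is the stated characterization. The set $\mathcal F_{\ge2}$ plays no role here; it is only named for later use.

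The one delicate point is the meaning of the shadow-disjointness hypothesis. We read $\partial_{r-2}(\mathcal L^*)$ through the support convention $V(\mathcal L^*)=\bigcup_{D\in\mathcal L^*}D$, so that any $(r-2)$-set meeting $V(\mathcal L^*)$ in at most one vertex cannot lie in that shadow. If instead one needs the ``strict inequality when $\mathcal B\ne\emptyset$'' to hold uniformly, one must also confirm that $n_0$ depends only on $(r,s,\gamma)$. That holds here because $\gamma=1/s$ is fixed by $T$.
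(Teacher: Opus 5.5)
Your proposal is correct and follows the paper's proof essentially step for step. You take $\mathcal B=\mathcal F_{\le 1}$ and $\mathcal D=\partial_{r-1}(\mathcal B)$, obtain $|\mathcal D|\ge|\mathcal B|/s$ from Lemma~\ref{LEM:tree-shadow bound}, observe that the two $(r-2)$-shadows are disjoint because $r-2\ge 2$, and then invoke Lemma~\ref{THM: construction} with $\gamma=1/s$ for both the inequality and the equality case; your extra checks (that $\mathcal D$ lives on $[n]\setminus W$, and the separate case $\mathcal B=\emptyset$) are details the paper leaves implicit.
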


\begin{proof}
Part~(a) is the existence assertion in
Lemma~\ref{THM: construction} (1).

For part~(b), let
$\mathcal{D}=\partial_{r-1}({\mathcal{F}_{\le 1}})$. Since
${\mathcal{F}_{\le 1}}$ is $T^{(r)}$-free, lemma~\ref{LEM:tree-shadow bound} gives
$|\mathcal{D}|\ge |{\mathcal{F}_{\le 1}}|/s$. Every member of
${\mathcal{F}_{\le 1}}$ contains at most one vertex of
$V(\mathcal{L}^*)$. Hence every member of
$\partial_{r-2}(\mathcal{D})$ contains at most one such vertex,
whereas every member of
$\partial_{r-2}(\mathcal{L}^*)$ is contained in
$V(\mathcal{L}^*)$. Since $r\ge4$, the two shadows are disjoint.
Lemma~\ref{THM: construction}, with $\gamma=1/s$, now gives the
claimed inequality.

Moreover, by Lemma~\ref{THM: construction} (2),  equality 
holds if and only if ${\mathcal{F}_{\le 1}}=\emptyset$ and $\mathcal{F}_W=S_W$.
\end{proof}

\begingroup

\begin{proof}[Proof of Theorem~\ref{THM: Main}]
We now prove the upper bound and its equality statement; the lower
bounds have already been established in Lemma~\ref{LEM: tree lower bounds}.
Put $\mathcal H=T^{(r)}$, $q=e(T)$, and
$s=|V(\mathcal H)|=a+|B|+(r-2)q$.

We first claim that $ \sigma(\mathcal H)=a$.  For every nonempty $U\subseteq A$, the graph
$T[U\cup N_T(U)]$ is a forest, so
\[
 k|U|\le e\bigl(T[U\cup N_T(U)]\bigr)
 \le |U|+|N_T(U)|-1.
\]
Hall's theorem therefore gives a matching saturating $A$.
Its $a$ expanded edges are pairwise disjoint, whereas $A$ itself
is a crosscut of $\mathcal H$. Hence
\[
 a\le\tau(\mathcal H)\le\sigma(\mathcal H)\le a,
 \qquad\text{so }\sigma(\mathcal H)=a.
\]
If $a=1$, then $T=K_{1,k}$, $B_0=\emptyset$, and the exact
conclusion and uniqueness follow from Theorem~\ref{LEM: linear star}.
Thus assume $a\ge2$.

Let $\mathcal F\subseteq{[n]\choose r}$ be $\mathcal H$-free.
For the upper bound we may assume
$|\mathcal F|\ge|\mathcal L_2(n,r,k,a)|$; otherwise there is
nothing to prove. For a set $W$ of size $a-1$,
\[
 |S_W|={n\choose r}-{n-a+1\choose r}
       =(a-1){n\choose r-1}+O(n^{r-2}).
\]
Since $\beta=((r-2)(a+1)+1)^{-1}<1$, this implies
$|\mathcal F|\ge(a-1){n\choose r-1}-n^{r-1-\beta}$ for
sufficiently large $n$. Also $\mathcal H$ is $2$-reducible.
Lemma~\ref{LEM: constructions} now provides a set $W$ with
$|W|=a-1$ and the families
$\mathcal L^*,\mathcal F_W,{\mathcal F_{\le 1}},{\mathcal F_{\ge 2}}$ defined there.
In particular, $\mathcal F_W,{\mathcal F_{\le 1}},{\mathcal F_{\ge 2}}$ partition
$\mathcal F$, and the last two families avoid $W$.

\begin{clm}\label{LEM: linear star free}
The family ${\mathcal F_{\ge 2}}$ is $\mathcal L_k^r$-free.
\end{clm}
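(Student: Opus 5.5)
We argue by contradiction. Suppose ${\mathcal F_{\ge 2}}$ contains a linear star $\{E_1,\ldots,E_k\}$ with center $c$. We will use it, together with the set $W$ from Lemma~\ref{LEM: constructions}, to build a copy of $\mathcal H=T^{(r)}$ in $\mathcal F$, which is impossible.

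\textbf{Setting up the embedding.} Fix a vertex $x\in A_0$, so $d_T(x)=k$, and let $y_1,\ldots,y_k$ be its neighbours in $B$.
\begin{itemize}
\item Map $x$ to $c$ and map the $a-1$ vertices of $A\setminus\{x\}$ bijectively onto $W$.
\item Map the expanded edge $xy_i$ to $E_i$.
\item Since $E_i\in{\mathcal F_{\ge 2}}$, we have $|E_i\cap V(\mathcal L^*)|\ge2$, so $E_i$ contains a vertex $z_i\in V(\mathcal L^*)$ with $z_i\ne c$. Map $y_i$ to $z_i$.
\end{itemize}
This is exactly where the hypothesis $|F\cap V(\mathcal L^*)|\ge2$ is needed: the second vertex guarantees that the image of $y_i$ is not the center and still lies in the rich part of the common link. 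The $z_i$ are distinct because the star is linear, and they avoid $W$ because members of ${\mathcal F_{\ge 2}}$ avoid $W$. The remaining vertices of each $E_i$ serve as the private vertices of $xy_i$. All other vertices of $B$, and all remaining private vertices, will be chosen fresh.

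\textbf{Embedding the edges at the $z_i$.} Each remaining edge of $T$ joins some $w\in A\setminus\{x\}$ (now a vertex of $W$) to a vertex of $B$. Every $D\in\mathcal L_{\mathcal F}(W)$ satisfies $D\cup\{w\}\in\mathcal F$ for every $w\in W$. So it suffices to find, for each such edge $wy$, a set $D_{wy}\in\mathcal L_{\mathcal F}(W)$ with the following properties:
\begin{itemize}
\item $D_{wy}$ contains the image of $y$;
\item $D_{wy}$ avoids $c$, $W$, and all previously used vertices;
\item distinct $D_{wy}$ meet only in the image of a common $B$-vertex.
\end{itemize}

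For $y=y_i$ the image $z_i$ is prescribed, and here I use the defining property of $\mathcal L^*$. The vertex $z_i$ lies in some edge of $\mathcal L^*$, hence in some copy of $\mathcal K_{r-1}^{(r-1)}(s)$ inside $\mathcal L_{\mathcal F}(W)$. Inside that copy, choosing one vertex from each of the $r-2$ parts not containing $z_i$ gives $s$ pairwise disjoint $(r-2)$-sets $P_1,\ldots,P_s$, and each $\{z_i\}\cup P_j$ belongs to $\mathcal L_{\mathcal F}(W)$. Each used vertex blocks at most one $P_j$, and fewer than $s=|V(\mathcal H)|$ vertices are ever used. Hence we can greedily pick the $d_T(y_i)-1$ edges $\{z_i\}\cup P_j$ needed at $z_i$, all disjoint from everything already placed.

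\textbf{Remaining $B$-vertices.} For the other vertices $y\in B$ we are free to choose the image. By (a), $\mathcal L_{\mathcal F}(W)$ misses only $O(n^{r-1-\beta})$ members of ${[n]\choose r-1}$. So all but $o(n)$ vertices $v$ have link degree ${n-1\choose r-2}-o(n^{r-2})$ in $\mathcal L_{\mathcal F}(W)$, and a greedy choice of such a $v$, followed by $d_T(y)$ disjoint $(r-2)$-sets completing $v$ to members of $\mathcal L_{\mathcal F}(W)$, works since only a bounded number of vertices must be avoided. Alternatively, $\mathcal L^*$ itself is large and can be reused in the same way as for the $z_i$. Together these steps give a copy of $T^{(r)}$ in $\mathcal F$, the desired contradiction.

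\textbf{Main obstacle.} The delicate step is the bookkeeping around the prescribed vertices $z_i$. The sets chosen at $z_i$ must avoid:
\begin{itemize}
\item $c$;
\item the other vertices of all the $E_j$ (which play private or $B$ roles);
\item the other $z_j$;
\item the sets already chosen for other $B$-vertices.
\end{itemize}
The only room available is the $s$ disjoint choices inside one copy of $\mathcal K_{r-1}^{(r-1)}(s)$. I would handle this by doing the $z_i$ first, in a fixed order, immediately after placing the star. At that point the set of used vertices has size at most $|V(\mathcal H)|-1<s$, so a free petal always exists.

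I would also check that the choice of $x$ is harmless. When $a\ge2$, every vertex of $A$ has degree at least $k\ge3$, so Hall's matching argument already used for $\sigma(\mathcal H)=a$ leaves no other constraint.
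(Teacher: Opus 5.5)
Your proposal is correct and follows essentially the same route as the paper: send $x\in A_0$ to the star's center, send each $b_i$ to a second $V(\mathcal L^*)$-vertex of its petal, map $A\setminus\{x\}$ onto $W$, and complete the remaining expanded edges greedily inside copies of $\mathcal K_{r-1}^{(r-1)}(s)$ contained in $\mathcal L^*\subseteq\mathcal L_{\mathcal F}(W)$, using that fewer than $s$ vertices are ever occupied. Your density-based placement of the other $B$-vertices via part (a) also works; the paper instead uses your second option, placing those vertices in $V(\mathcal L^*)$ (which has at least $(r-1)s$ vertices) and reusing the same witness argument.
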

\begin{proof}
Choose $x\in A_0$ and write $N_T(x)=\{b_1,\ldots,b_k\}$.
The expanded edges through $x$ are
$E_i=\{x,b_i\}\cup Q_i$, where $|Q_i|=r-2$.
Suppose that $F_1,\ldots,F_k\in{\mathcal F_{\ge 2}}$ form a linear
$k$-star with center $v$.
For each $i$, the definition of ${\mathcal F_{\ge 2}}$ gives
\[
 |F_i\cap V(\mathcal L^*)|\ge2.
\]
Choose $y_i\in(F_i\setminus\{v\})\cap V(\mathcal L^*)$.
The $y_i$ are distinct because different petals meet only at $v$.
Define an injection $\phi$ on this expanded star by
$\phi(x)=v$, $\phi(b_i)=y_i$, and any bijection
from $Q_i$ to $F_i\setminus\{v,y_i\}$.
Thus even the leaf neighbors of $x$ have already been assigned images.

Map $A\setminus\{x\}$ bijectively onto $W$.
Next map $B\setminus N_T(x)$ to distinct unused vertices of
$V(\mathcal L^*)$.
These choices are possible: the existence of a vertex $y_i$ implies
that $\mathcal L^*$ contains a copy of
$\mathcal K_{r-1}^{(r-1)}(s)$, so
$|V(\mathcal L^*)|\ge(r-1)s$.
At this point the number of occupied vertices is
\[
 1+k(r-1)+(a-1)+(|B|-k)
 =a+|B|+k(r-2)\le s.
\]
For each remaining edge $ub\in E(T)$, where $u\in A\setminus\{x\}$
and $b\in B$, choose a complete multipartite witness
$\mathcal K_b\subseteq\mathcal L^*$ containing $\phi(b)$.
Choose one unused vertex from each of the $r-2$ parts of
$\mathcal K_b$ other than the part containing $\phi(b)$.
Together with $\phi(b)$ they form an edge $D_{ub}\in\mathcal L^*$.
Since $\mathcal L^*\subseteq\mathcal L_{\mathcal F}(W)$,
\[
 \{\phi(u)\}\cup D_{ub}\in\mathcal F.
\]
Every chosen vertex must avoid all previously assigned vertices,
including the images of all of $A\cup B$ and all earlier private
vertices. Before each such choice fewer than $s$ vertices have
been occupied, while every part has $s$ vertices. Thus the choices
can always be made, even when different witnesses overlap.
This extends $\phi$ to an embedding of  $T^{(r)}$ into
$\mathcal F$, a contradiction.
\end{proof}

Put $N=n-a+1$, the number of vertices outside $W$.
By the claim and the odd-$k$ statement of
Theorem~\ref{LEM: linear star},
\[
 |{\mathcal F_{\ge 2}}|\le|\mathcal L_2(N,r,k,1)|,
\]
with equality if and only if
${\mathcal F_{\ge 2}}\cong\mathcal L_2(N,r,k,1)$, on the $N$ vertices
outside $W$. Lemma~\ref{LEM: constructions}(b) yields
\begin{align*}
 |\mathcal F|
 &=|\mathcal F_W\cup{\mathcal F_{\le 1}}|+|{\mathcal F_{\ge 2}}|\\
 &\le|S_W|+|\mathcal L_2(N,r,k,1)|
 =|\mathcal L_2(n,r,k,a)|.
\end{align*}
Equality forces equality in both inequalities, namely
\[
 {\mathcal F_{\le 1}}=\emptyset,\qquad \mathcal F_W=S_W,\qquad
 {\mathcal F_{\ge 2}}\cong\mathcal L_2(N,r,k,1).
\]
Consequently every $\mathcal H$-free family attaining this upper
bound is isomorphic to $\mathcal L_2(n,r,k,a)$.
This is a conditional equality statement: it does not assert
that this construction is $\mathcal H$-free for every $T$.

For $B_0=\emptyset$, Lemma~\ref{LEM: tree lower bounds} supplies
a matching lower bound. Thus the upper bound is attained, and the
preceding equality characterization gives uniqueness. Together
with the lower bounds proved in that lemma, this completes the
proof of Theorem~\ref{THM: Main}.
\end{proof}
\endgroup

\section{Remarks and discussions}\label{SEC: remarks}
\begingroup

We give a counterexample showing that the odd-$k$ conclusions cannot
be extended to even $k$ simply by using
$\mathcal L_1(n,r,k,1)$ as the extremal construction.

Let $r\ge5$,  $k=4$, and let $n$ be sufficiently large. In Construction~A,
take $k=4$ and $a=1$, so that $W=\emptyset$, and put
\[
 S=V(G_1)=\{u_1,u_2,u_3,v_1,v_2,v_3,v_4\}.
\]
Thus $G_1$ is obtained from
$K[\{u_1,u_2,u_3\},\{v_1,v_2,v_3\}]$ by deleting $u_1v_1$
and adding $u_1v_4$ and $v_1v_4$. Every vertex of $G_1$ has
degree three except $v_4$, which has degree two. Define
\[
 D=\{v_4,u_2,u_3,v_2\},\qquad
 {\mathcal B}=\left\{D\cup X:
 X\in{V\setminus S\choose r-4}\right\},
\]
and put
\[
 \mathcal L^+=\mathcal L_1(n,r,4,1)\cup{\mathcal B}.
\]
The vertex $v_4$ is isolated in $G_1[D]$, and $D$ contains
neither of the special triples $\{v_4,u_j,v_3\}$, $j=2,3$.
Hence ${\mathcal B}$ is disjoint from $\mathcal L_1(n,r,4,1)$, and
\[
 |\mathcal L^+|=|\mathcal L_1(n,r,4,1)|+{n-7\choose r-4}.
\]

\begin{clm}
$\mathcal L^+$ is
$\{\mathcal L_4^r,2\mathcal L_3^r\}$-free.
\end{clm}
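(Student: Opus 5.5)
The plan is to analyse every edge of $\mathcal L^+$ through its core trace $F\cap S$ and to reduce both forbidden configurations to counting vertices of $S$, which has only $7$ vertices. First I classify the three kinds of edges.

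\emph{Type 1.} Members of $\mathcal F_1$ have trace inducing a subgraph of $G_1$ with no isolated vertex. In particular the trace has size at least $2$, and every vertex of the trace has a $G_1$-neighbour inside it.

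\emph{Type 2.} Members of $\mathcal F_3=E_2\cup E_3$ have trace containing $\{u_j,v_3,v_4\}$, with $v_4$ isolated. Here $u_jv_3\in E(G_1)$ for $j=2,3$.

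\emph{Type 3.} Members of $\mathcal B$ have trace exactly $D=\{v_4,u_2,u_3,v_2\}$, in which $u_2v_2,u_3v_2\in E(G_1)$.

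For a linear star, the petals pairwise meet only in the centre $c$, so the sets $(F_i\cap S)\setminus\{c\}$ are pairwise disjoint.

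\textbf{$\mathcal L_4^r$, centre outside $S$.} Each petal has trace of size at least $2$; type 2 traces have size at least $3$ and type 3 traces have size $4$. Since the four traces are disjoint and $|S|=7<8$, all four petals must be of type 1. Then each trace contains an edge of $G_1$, and these edges form a matching of size $4$. This is impossible, since $\nu(G_1)\le 3$ because $|S|=7$.

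\textbf{$\mathcal L_4^r$, centre $c\in S\setminus\{v_4\}$.} I claim every petal contains a $G_1$-neighbour of $c$ other than $c$.
\begin{itemize}
\item For type 1 this holds because $c$ is not isolated in the trace.
\item For type 2, $c\in\{u_j,v_3\}$, and the trace contains the partner across the edge $u_jv_3$.
\item For type 3, $c\in\{u_2,u_3,v_2\}$, and the trace contains $v_2$, respectively $u_2$ or $u_3$.
\end{itemize}
These neighbours are distinct because the petals are disjoint off $c$, and $d_{G_1}(c)\le 3$. Hence there are at most three petals.

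\textbf{$\mathcal L_4^r$, centre $c=v_4$.} This is the delicate case, since $v_4$ is isolated in type 2 and type 3 traces. Type 1 petals use distinct neighbours among $N_{G_1}(v_4)=\{u_1,v_1\}$, so there are at most $2$ of them. Any two type 2 petals share $v_3$, so there is at most one. There is at most one type 3 petal. A type 2 petal (containing $u_j$, $j\in\{2,3\}$) and a type 3 petal (containing $u_2,u_3$) share a vertex other than $v_4$, so they cannot coexist. This gives at most $2+1=3$ petals.

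\textbf{$2\mathcal L_3^r$.} Take two vertex-disjoint linear $3$-stars; their traces on $S$ are disjoint. A $3$-star centred in $S$ uses at least $4$ vertices of $S$: the centre together with three further vertices. By the argument above, those three vertices are distinct neighbours when $c\ne v_4$. When $c=v_4$, the disjoint remainders of the traces still give at least $3$ further vertices, since each trace contains another $S$-vertex. A $3$-star centred outside $S$ has three disjoint traces of size at least $2$, so it uses at least $6$ vertices of $S$. Two disjoint stars therefore use at least $8>7=|S|$ core vertices, a contradiction.

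The main obstacle is the centre-in-$S$ analysis, particularly $c=v_4$, where the special families $\mathcal F_3$ and $\mathcal B$ break the neighbour-injection argument. There I rely on the explicit overlaps through $v_3$ and $u_j$.
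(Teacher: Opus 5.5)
Your proof is correct and follows essentially the paper's argument: every core trace has size at least two against $|S|=7$, petals at a centre $c\ne v_4$ inject into distinct $G_1$-neighbours, and the centre $v_4$ is counted separately using the shared vertices $v_3$ and $u_j$. Your unified treatment of all three edge types replaces the paper's two-stage ``first $\mathcal L_1(n,r,4,1)$, then add the $\mathcal B$-petal'' argument, though it contains one small slip: a type-2 trace may also contain $v_2$, so the list $c\in\{u_j,v_3\}$ is not exhaustive, but the missed case $c=v_2$ works because $u_j\in F$ is a neighbour of $v_2$ (and in the outside-$S$ case, $4\cdot 2=8>7$ already finishes, so your matching step is redundant).
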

\begin{proof}First consider $\mathcal L_1(n,r,4,1)=\mathcal F_1\cup\mathcal F_3$.
Every edge meets $S$ in at least two vertices, so a linear
$4$-star centered outside $S$ would require at least eight
vertices of $S$, which is impossible. For a center $x\in S$,
each $\mathcal F_1$-petal contains a neighbor of $x$ in $G_1$;
these neighbors must be distinct. All $\mathcal F_3$-edges contain
$v_4$ and $v_3$, so at most one can occur in a linear star.
If $x=v_4$, there are at most two $\mathcal F_1$-petals and
one $\mathcal F_3$-petal. Suppose that $x\ne v_4$ and that a petal $F$ belongs to
$\mathcal F_3$. By definition, $F$ contains
$\{v_4,u_j,v_3\}$ for some $j\in\{2,3\}$ but contains neither
$u_1$ nor $v_1$. Since $x\in F\cap S$ and $x\ne v_4$, we have
$x\in\{u_2,u_3,v_2,v_3\}$. If $x\in\{u_2,u_3\}$, then
$v_3\in F$ is a neighbor of $x$; if $x\in\{v_2,v_3\}$, then
$u_j\in F$ is a neighbor of $x$. Hence
$|N_{G_1}(x)\setminus F|\le2$, as $d_{G_1}(x)=3$.
Every other petal belongs to $\mathcal F_1$, meets $F$ only in
$x$, and contains a neighbor of $x$ outside $F$. These neighbors
must be distinct, since any two petals intersect only in $x$.
Therefore, at most two other petals can occur.
Without an $\mathcal F_3$-petal, there are at most
$d_{G_1}(x)\le3$ petals. Thus $\mathcal L_1(n,r,4,1)$
is $\mathcal L_4^r$-free.

Suppose next that a linear $4$-star in $\mathcal L^+$ contains
$F=D\cup X\in{\mathcal B}$. It contains only one member of
${\mathcal B}$, since all such members contain $D$.
Every $\mathcal F_3$-edge meets $D$ in at least two vertices,
so the other three petals must belong to $\mathcal F_1$.
If the center is $v_4$, only its two core neighbors are available.
If the center is $u_2$ or $u_3$, the neighbor $v_2\in D$
is blocked, again leaving only two available core neighbors.
If the center is $v_2$, both $u_2$ and $u_3$ are blocked,
leaving only one. Finally, if the center lies in $X$, the
other three petals require pairwise disjoint core traces of
size at least two inside $S\setminus D$, which has only three
vertices. All cases are impossible, proving that $\mathcal L^+$
is $\mathcal L_4^r$-free.

To exclude two vertex-disjoint linear $3$-stars, recall that
every edge of $\mathcal L^+$ meets $S$ in at least two vertices.
A linear $3$-star centered in $S$ therefore uses at least four
vertices of $S$, while one centered outside $S$ uses at least six.
Two vertex-disjoint copies would use at least eight vertices of
$S$, contradicting $|S|=7$.
\end{proof}

Consequently,
\[
 \begin{aligned}
 \ex_r(n,\mathcal L_4^r)
 &\ge \ex_r\bigl(n,\{\mathcal L_4^r,2\mathcal L_3^r\}\bigr)\\
 &\ge |\mathcal L_1(n,r,4,1)|+{n-7\choose r-4}
 >|\mathcal L_1(n,r,4,1)|.
 \end{aligned}
\]
%In particular, when $r=5$, the additional edges are $\{v_4,u_2,u_3,v_2,x\}$ with $x\in V\setminus S$, giving
%$n-7$ extra edges. 
Taking $T=K_{1,4}$ also gives a counterexample
to the corresponding even-$k$ extension of
Theorem~\ref{THM: Main}. Thus additional core traces are necessary
in the even-$k$ case; this example does not assert that
$\mathcal L^+$ is itself extremal.

%\medskip
The example above motivates the following problems concerning the
remaining cases of Theorems~\ref{LEM: linear star},
\ref{LEM: two linear star}, and~\ref{THM: Main}.

%\medskip
\begin{problem}
%\noindent\textbf{Problem 1.}
(a) For  even $k\ge4$, determine  the exact Tur\'an number  $\ex_r(n,\mathcal L_k^r)$,  $ \ex_r\bigl(n,\{\mathcal L_k^r,2\mathcal L_{k-1}^r\}\bigr)$,   $\ex_r(n,T^{(r)})$ and all extremal families for sufficiently large $n$. 

(b) For odd $k\ge3$ and $B_0\ne\emptyset$, determine the exact
Tur\'an number  $\ex_r(n,T^{(r)})$ and all extremal families. 

\end{problem}
    
\medskip
%\noindent{\bf Acknowledgment:}
\section*{Acknowledgments} This work was supported by the National Key R\&D Program of China (2023YFA1010203), the
National Natural Science Foundation of China (12471336, 12601648), 
the Quantum Science and Technology-National Science and Technology Major Project (2021ZD0302902).
\section*{Declaration of generative AI}
%and AI-assisted technologies in the manuscript preparation process}

An initial draft of this manuscript was completed in late 2024.
During the preparation of the present version, the authors used
GPT-6 Astra to assist in constructing the counterexample
for $k=4$, preparing the TikZ code for Figure~1, and to improve the language and readability of the manuscript.
The authors verified the counterexample and its mathematical
justification, reviewed and edited the AI-assisted material, and
take full responsibility for the accuracy and integrity of the
entire manuscript.

\end{document}